\documentclass[final,1p]{elsarticle}

\usepackage{amssymb}
\usepackage{amsmath}
\usepackage{amsthm}
\usepackage{lineno}
\usepackage{latexsym}
\usepackage{cases}
\usepackage{bm}
\usepackage[ruled,lined]{algorithm2e}
\usepackage{algorithmic}
\usepackage{array}
\usepackage[colorlinks,linkcolor=blue, anchorcolor=blue, citecolor=blue]{hyperref}
\usepackage{color,tabularx}
\usepackage{booktabs}
\usepackage{subfigure}
\usepackage{multirow}
\usepackage{stmaryrd}
\usepackage[T1]{fontenc}
\biboptions{numbers,sort&compress}
\usepackage{graphicx} 
\usepackage{adjustbox}
\usepackage{makecell}
\allowdisplaybreaks

\newtheorem{theorem}{Theorem}[section]

\newtheorem{lemma}[theorem]{Lemma}

\theoremstyle{definition}

\newtheorem{example}[theorem]{Example}

\theoremstyle{remark}
\newtheorem{remark}[theorem]{Remark}

\newcommand{\f}{\frac}

\newcommand{\p}{\partial}

\newcommand{\mbb}{\mathbb}

\newcommand{\mbX}{\mathbb{X}}
\newcommand{\mrR}{\mathrm{R}}

\newcommand{\mrM}{\mathrm{ M }}
\numberwithin{equation}{section}

\begin{document}    
  \begin{frontmatter}

  \title{A linear, decoupled and positivity-preserving time-staggered block-centered finite difference method for the multi-species Keller–Segel chemotaxis system}

  \author[OUC]{Ao Zhang} \ead{zhangao6290@stu.ouc.edu.cn}
  \author[OUC]{Bingyin Zhang} \ead{zhangbingyin@stu.ouc.edu.cn}
  \author[OUC,LAB]{Hongfei Fu\corref{Fu}}\ead{fhf@ouc.edu.cn}

  \address[OUC]{School of Mathematical Sciences, Ocean University of China, Qingdao 266100, P.R. China}
  \address[LAB]{Laboratory of Marine Mathematics, Ocean University of China, Qingdao 266100, P.R. China}

\cortext[Fu]{Corresponding author.}

\begin{abstract}
In this paper, we present a linearly implicit, second-order block-centered finite difference (BCFD) prediction-then-projection scheme for the multi-species Keller–Segel chemotaxis system on non-uniform spatio-temporal grids. The proposed scheme integrates a standard Crank-Nicolson time-marching algorithm with an $L^2$ projection step to enforce positivity and mass conservation. The use of variable time stepsize and time-staggered discretization fully decouples the solutions of the multi-species cell density variables and the chemoattractant concentration variable while facilitating linearization, thereby greatly enhancing computational efficiency. Notably, the variable time-stepping algorithm and non-uniform grid BCFD discretization jointly enable adaptive resolution and local refinement near blow-up, thereby improving efficiency and accuracy without compromising the desired physical property-preserving in the simulation.
Furthermore, using the mathematical induction method and the energy analysis approach, the unique solvability of the proposed scheme is rigorously proved, and we show that cell densities achieve second-order convergence in both time and space in the discrete $L^2$ norm, while the chemoattractant concentration achieves second-order convergence in the discrete $H^1$ norm. Representative numerical experiments are presented to validate the theoretical findings and demonstrate the reliability of the proposed scheme in simulating the blow-up phenomenon.
\end{abstract}

\begin{keyword}
Keller--Segel chemotaxis system, Block-centered finite difference method, Projection method, Mass conservation, positivity-preserving,  Error estimates.
\end{keyword}

\end{frontmatter}

\section{Introduction}
In the 1970s, Keller and Segel \cite{KS'70,KS'71} established a pioneering mathematical framework for chemotaxis. They formulated a system of nonlinear partial differential equations to represent the essential biological mechanism, in which cellular or organismal movement is directed by chemical cues that can be attractive or repulsive. Mathematically, the multi-species  ($d$-species) Keller--Segel chemotaxis model is to find the cell (or organism) density functions $\rho_i(\bm{x},t) ~ (i=1,\ldots,d)$ and the chemoattractant concentration function $c(\bm{x},t) $ such that 
\begin{equation}\label{model:dsKS}
\left\{
	\begin{aligned}
		&\p_t \rho_i  = \kappa_i\Delta \rho_i-\chi_i \nabla \cdot(\rho_i \nabla c),  & \qquad \text{in}~~ \Omega \times(0, T], \\
		&\p_t c  = \beta \Delta c- \alpha c+ \sum_{i=1}^d \gamma_i \rho_i,   & \qquad \text{in}~~ \Omega \times(0, T].
	\end{aligned}
    \right.
\end{equation}
Here $\Omega \subset \mrR^2$ is assumed to be a two-dimensional convex, bounded and open domain. 
The parameters $\kappa_i ~(i=1,\ldots,d)$ and $\beta$ are positive diffusion coefficients, $\chi_i>0~ (i=1,\ldots,d)$ is the chemoattractant sensitivity constant,  $\alpha \geq 0$ is the consumption rate of chemoattractant, and $\gamma_i \geq 0 ~ (i=1,\ldots, d)$ represents the production rate of chemoattractant. 

Without loss of generality, we only consider the two-species Keller--Segel chemotaxis model \eqref{model:dsKS}, which involves identifying three real functions $u = u(\bm{x}, t)$, $v = v(\bm{x}, t)$ and $c = c(\bm{x}, t)$ such that
\begin{equation}\label{model:KS}
\left\{
	\begin{aligned}
		&  \p_t u  = \kappa_1\Delta u-\chi_1 \nabla \cdot(u \nabla c),      & \qquad \text{in}~~ \Omega \times(0, T],\\
      &\p_t v  = \kappa_2\Delta v-\chi_2 \nabla \cdot(v \nabla c),      & \qquad \text{in}~~ \Omega \times(0, T],\\
    & \p_t c  = \beta \Delta c- \alpha c+  \gamma_1 u  +\gamma_2  v, & \qquad \text{in}~~ \Omega \times(0, T],
	\end{aligned}
    \right.
\end{equation}
subject to homogeneous Neumann boundary conditions
\begin{equation}\label{model:KS:bc}
\begin{aligned}
	&\p_{\bm{n}} u  :=\nabla u \cdot \bm{n}=0,\quad\p_{\bm{n}} v =0, \quad \p_{\bm{n}} c=0, & & \text { on } \p \Omega \times(0, T], 
\end{aligned}
\end{equation}
and initial conditions
\begin{equation}\label{model:KS:ic}
\begin{aligned}
	&u(\bm{x},0)  =u^0(\bm{x}), \quad v(\bm{x},0)  =v^0(\bm{x}), \quad c(\bm{x},0)=c^0(\bm{x}),  & & \text { in } \Omega,
\end{aligned}
\end{equation}
where $\bm{n}$ represents the unit outer normal vector onto the boundary.

Significantly, the Keller--Segel chemotaxis system \eqref{model:KS}--\eqref{model:KS:ic} obeys the mass conservation law, 
i.e.,
\begin{equation}\label{model:KS:MC}
\begin{aligned}
M[u](t) &:= \int_\Omega u(\bm{x}, t)  d\bm{x} = \int_\Omega u^0(\bm{x})  d\bm{x}=M[u](0),\\
M[v](t) &:= \int_\Omega v(\bm{x}, t)  d\bm{x} = \int_\Omega v^0(\bm{x})  d\bm{x}=M[v](0).
\end{aligned}
\end{equation}
Besides, for non-negative regular initial data \eqref{model:KS:ic}, i.e., $u^0(\bm{x})\ge 0$, $v^0(\bm{x})\ge 0$ and $c^0(\bm{x})\ge 0$, the Keller--Segel chemotaxis system admits unique solutions with non-negative cell density and chemoattractant concentration, i.e.,
 \begin{equation}\label{model:KS:positivity}
 	\begin{aligned}
 		u(\bm{x},t) \geq 0, \quad v(\bm{x},t)  \geq 0, \quad c(\bm{x},t)  \geq 0, \quad \text { in } \Omega.
 	\end{aligned}
 \end{equation}
Moreover, the Keller--Segel chemotaxis system can be viewed as a Wasserstein gradient flow driven by the total free energy 
\begin{equation}\label{def:Energy}
	\begin{aligned}
		E[u,v,c] := \int_\Omega \left[\f{\gamma_1 \kappa_1}{\chi_1}f(u) + \f{\gamma_2 \kappa_2}{\chi_2}f(v) -\gamma_1 u c -\gamma_2 v c +\f{\beta}{2} |\nabla c|^2 + \f{\alpha}{2} c^2 \right] d\bm{x},
	\end{aligned}
\end{equation}
where $f(\rho) := \rho \log(\rho) - \rho$ with $\rho \in (0,+\infty)$ for $\rho=u, v$. It is easy to verify that the two-species Keller--Segel model is energy dissipative, i.e.,
\begin{equation}\label{model:KS:Energy}
	\begin{aligned}
		\f{dE[u,v,c]}{dt} = -\int_\Omega \left[\f{\chi_1}{\gamma_1} u \Bigl(\nabla \f{\delta E}{\delta u}\Bigr)^2 + \f{\chi_2}{\gamma_2} v \Bigl(\nabla \f{\delta E}{\delta v} \Bigr)^2 + \Bigl(\f{\delta c}{\delta t}\Bigr)^2 \right] d\bm{x} \le 0,
	\end{aligned}
\end{equation}
where 
$\f{\delta E}{\delta u} = \f{\gamma_1 \kappa_1}{\chi_1} f'(u) - \gamma_1 c$ and $\f{\delta E}{\delta v} = \f{\gamma_2 \kappa_2}{\chi_2} f'(v) - \gamma_2 c$.

In recent years, considerable efforts have been devoted to the development of structure-preserving numerical methods for the Keller–Segel model that rigorously maintain the physical laws \eqref{model:KS:MC}, \eqref{model:KS:positivity}, and \eqref{model:KS:Energy}. For example, Chertock et al. \cite{AYHA'18} developed a fourth-order hybrid finite-volume–finite-difference scheme that can preserve positivity and mass conservation, while demonstrating high-order spatial accuracy and structure-preserving capabilities. Based on the Slotboom formulation \cite{Slotboom'73, JY'11,HZ'23}, Liu et al. \cite{LWZ'18} designed second-order central difference schemes that can also preserve both positivity and mass conservation. Although optimal-order error estimates were not established, the authors provided a stability analysis for the proposed scheme. Moreover, inspired by the KKT-based positivity-preserving limiter algorithm, Cheng and Shen \cite{CS'22} developed a positivity/bound-preserving and mass-conservative Lagrange multiplier method for nonlinear parabolic systems. This method bypasses the need for complex nonlinear constrained optimization solvers, thereby substantially improving computational efficiency. Based on the scalar auxiliary variable (SAV) approach and function transformation, Huang and Shen \cite{HS'21} also constructed a positivity/bound-preserving, mass-conservative, and unconditionally modified energy-dissipative high-order time discretization scheme. Recently, Tong and Cai \cite{TC'24} constructed a Crank-Nicolson (CN) type finite difference scheme with second-order accuracy for the Poisson-Nernst-Planck equation, where a novel projection approach is adopted to ensure both positivity and mass conservation. Most importantly, they proved the optimal-order error estimates for the proposed scheme. This approach has also inspired us to develop structure-preserving numerical schemes for the two-species Keller--Segel chemotaxis model.

The block-centered finite difference  (BCFD) method \cite{AWY'97}, also known as the cell-centered finite difference method, has been widely applied in recent years to the solution of various PDE models \cite{RP'12,RL'15,AWY'97,XXF'22,WXF'24,SXLF'21,LSR'19}. Notably, it can achieve second-order spatial accuracy on non-uniform spatial grids without sacrificing accuracy compared to standard finite difference schemes, and thus has great potential for simulating problems such as the Keller–Segel model with local blow-up solutions. As a positive first step, we recently developed a linearly implicit, fully decoupled CN-BCFD scheme on non-uniform spatial grids \cite{XF'25}. The proposed scheme not only guarantees mass conservation and second-order convergence, but also demonstrates a remarkable ability to capture blow-up phenomenon effectively and accurately. To further improve computational efficiency, it is better to adopt a time-staggered grid discretization that is able to facilitate linearization. In a recent paper \cite{ZWP'26}, Zhang et al. proposed a uniformly time-staggered numerical scheme for the Keller--Segel--Navier--Stokes model, which achieves decoupling and linearization of the cell density and chemoattractant concentration. 
However, rigorous error estimates for the fully discrete scheme are lacking. 
Motivated by these observations, we propose a non-uniform time-staggered BCFD scheme to better capture blow-up phenomenon effectively and provide a rigorous theoretical analysis.

In this paper, we primarily focus on preserving positivity \eqref{model:KS:positivity} and mass conservation \eqref{model:KS:MC} for the two-species Keller–Segel system \eqref{model:KS}--\eqref{model:KS:ic}. To this end, we employ the time-staggered BCFD discretization in which the cell densities are first computed via a prediction step, yielding solutions may not be positive; the predicted solutions are then projected onto a function space that enforces both positivity and mass conservation constraints via the standard $L^2$ projection; and finally, the chemoattractant concentration is solved using the up-to-date cell densities. This approach ensures the desired physical properties, while requiring only the solutions of two simple nonlinear single-variable algebraic equations; see Eq. \eqref{eq:Lagrange:nonlinear} of Remark \ref{Rem:KKT} for reference. Moreover, the proposed time-staggered BCFD method with $L^2$ projection is shown to be second-order accuracy in both time and space. In summary, this work presents a linearly implicit second-order time-staggered BCFD prediction-then-projection scheme with three main contributions:
\begin{itemize}
	\item The proposed scheme is almost linear (expect for the efficient $L^2$ projection step) and fully decoupled through a variable-step staggered-in-time discretization approach, which significantly enhances computational efficiency.
    
	\item The scheme unconditionally preserves positivity and mass conservation of the cell densities at the discrete level.  In addition, the non-negativity of the chemoattractant concentration is ensured under a sufficient time-step condition.
    
    \item Optimal-order error analysis is rigorously established on non-uniform temporal grids, enabling the use of an efficient adaptive time-stepping strategy to accurately capture the blow-up phenomenon.
\end{itemize}

The rest of the paper is organized as follows. In Section \ref{sec:mc-bcfd}, we propose a fully discrete time-staggered CN-BCFD scheme with an $L^2$ projection strategy, and prove the positivity-preserving and mass-conservation properties at the discrete level for the two-species Keller--Segel chemotaxis model. Optimal-order error estimates together with the unique solvability of solutions to the proposed scheme are presented in Section \ref{sec:erro-estimates}. In Section \ref{sec:num}, we present several numerical experiments to validate the accuracy, physical property-preserving properties, and reliability of the proposed scheme in simulating the blow-up phenomenon. Concluding remarks are given in Section \ref{sec:conclusion}.  Throughout this paper, we denote by $K$ with or without subscripts a generic positive constant that is independent of the grid parameters, but may have different values in different occurrences.

\section{A non-uniform time-staggered BCFD prediction-then-projection scheme}\label{sec:mc-bcfd}
This section is devoted to the construction of a time-staggered BCFD prediction-then-projection method for the two-species Keller–Segel chemotaxis system \eqref{model:KS}--\eqref{model:KS:ic}.
For simplicity, below we assume $\Omega :=(a^x, b^x)\times (a^y, b^y)$ and take the physical parameters $\gamma_i = \kappa_i = \chi_i = \alpha = \beta \equiv 1$. 

\subsection{Notations and preliminaries}
To fully decouple the concentration variable from the two-species density variables in \eqref{model:KS} while enabling linearization, we employ a staggered-in-time variable-step Crank–Nicolson time-marching algorithm coupled with a non-uniform grid BCFD spatial discretization. 

First, we introduce two distinct families of non-uniform partitions of $[0,T]$. The primal time levels are defined as $0 = t_0<t_1<\cdots<t_N = T$, with stepsizes $\tau_{n-1/2}=t_n-t_{n-1}$ for $1\leq n \leq N$, and the cell density unknowns are evaluated at these mesh nodes. The staggered time levels are chosen as $t_{n-1/2} = {(t_{n-1}+t_n)}/{2}$, the midpoint of each subinterval $[t_{n-1}, t_n]$  for $1\leq n \leq N$, and the the chemoattractant concentration variable is approximated at these staggered nodes. Additionally, we set $\tau_0 = t_{1/2}-t_0={\tau_{1/2}}/{2}$ and $\tau_{n} =t_{n+1/2}-t_{n-1/2}={(\tau_{n+1/2} + \tau_{n-1/2})}/{2}$ for $1\leq n \leq N-1$. Let the maximum stepsize be $\tau:=\max_{1\leq n\leq N}\tau_{n-1/2}$. Furthermore, we assume that 
there exist two positive constants $\sigma_* \le \sigma^*$ such that, for all $1\leq n \leq N-1$,
$$ 	
  \sigma_* \tau_{n-1/2} \le \tau_{n+1/2}  \le \sigma^* \tau_{n-1/2}.
$$ 
Given staggered-in-time grid functions $\{g^n\}_{n\ge 0}$ and $\{g^{n+1/2}\}_{n\ge 0}$,  we define
\begin{align*}
	& d_\tau g^{n+1/2}= \f{g^{n+1}-g^{n}}{\tau_{n+1/2}},   ~~
	  D_\tau g^{n}= \f{g^{n+1/2}-g^{n-1/2}}{\tau_{n}}.
\end{align*}

Next, let $N_x$ and $N_y$ be the numbers of spatial grids along the $x$- and $y$-coordinates, respectively. Similar to those used in \cite{WW'88,RP'13}, non-uniform staggered spatial grids are introduced. The primal grid points are denoted by
\begin{align*}
\Pi_x: ~ a^x=x_{1 / 2}<x_{3 / 2}< \ldots < x_{i-1 / 2}<x_{i+1 / 2} < \ldots <  x_{N_x+1 / 2}=b^x,\\
\Pi_y: ~ a^y=y_{1 / 2}<y_{3 / 2}< \ldots < y_{j-1 / 2}<y_{j+1 / 2} < \ldots <  y_{N_y+1 / 2}=b^y,
\end{align*}
with grid sizes $h_i=x_{i+1/2}-x_{i-1/2}$ for $i=1, \ldots, N_x$ and 
$k_j=y_{j+1/2}-y_{j-1/2}$ for $j=1, \ldots, N_y$. Let $h:=\max \{h_i,  k_j\}$.
The middle grid points are denoted by
\begin{align*}
	 \Pi_x^*: ~ x_i= (x_{i-1/2}+x_{i+1/2} ) / 2,  ~~ i=1, \ldots, N_x,~~
     \Pi_y^*: ~ y_j= (y_{j-1/2}+ y_{j+1/2} ) / 2,  ~~ j=1, \ldots, N_y,
\end{align*}
with grid sizes  
$h_{i+1/2}=x_{i+1}-x_i= (h_{i+1} +h_i)/2$ for $i=1,\ldots, N_x-1$ and 
$k_{j+1/2}=y_{j+1}-y_j= (k_{j+1} +k_j)/2$ for $j=1,\ldots, N_y-1$.
Given spatial grid functions $g=\{g_{i, j}\}$, $\hat{g}=\{g_{i+1/2, j}\}$ and $\check{g}=\{g_{i, j+1/2}\}$ defined on $\Pi_x^*\times \Pi_y^*$, $\Pi_x\times \Pi_y^*$ and $\Pi_x^*\times \Pi_y$, respectively,  we define
\begin{align*}
	& [d_x g]_{i+1/2, j}=d_x g_{i+1/2, j}=\f{g_{i+1, j}-g_{i, j}}{h_{i+1/2}}, ~~	
        && [d_y g]_{i, j+1/2}=d_y g_{i, j+1/2}=\f{g_{i, j+1}-g_{i, j}} {k_{j+1/2}}, \\
	& \bigl[D_x \hat{g}]_{i, j}=D_x \hat{g}_{i, j}=\f{\hat{g}_{i+1/2, j}-\hat{g}_{i-1/2, j}}{h_i}, ~~
	&&  \bigl[D_y \check{g}]_{i, j}=D_y \check{g}_{i, j}=\f{\check{g}_{i, j+1/2}-\check{g}_{i, j-1/2}}{k_j}.
\end{align*}
Besides, we introduce the discrete inner products and norms on $\Pi_x^*\times \Pi_y^*$, $\Pi_x\times \Pi_y^*$ and $\Pi_x^*\times \Pi_y$, respectively, as follows:
\begin{align*}
	& (f, g)_{\rm M}=\sum_{i=1}^{N_x} \sum_{j=1}^{N_y} h_i k_j f_{i, j} g_{i, j},  &&\|f\|_{\rm M}^2 = (f, f)_{\rm M}, \\
	& (f, g)_x=\sum_{i=1}^{N_x-1} \sum_{j=1}^{N_y} h_{i+1/2} k_j f_{i+1/2, j} g_{i+1/2, j},  &&\|f\|_x^2 = (f, f)_x, \\
	& (f, g)_y=\sum_{i=1}^{N_x} \sum_{j=1}^{N_y-1} h_i k_{j+1/2} f_{i, j+1/2} g_{i, j+1/2},  &&\|f\|_y^2 = (f, f)_y,\\
	& (\bm {f}, \bm {g})_{\rm TM}  = (f^x, g^x )_x+ (f^y, g^y)_y,    &&\|\bm {f}\|_{\rm TM}^2 = (\bm{f}, \bm{f})_{\rm TM},\\
    & \|\bm{d}g\|_\infty 
    = \max _{i, j} |[d_x g]_{i+1/2, j}|+ \max _{i, j} |[d_y g]_{i, j+1/2}|. &&
\end{align*}

The following two lemmas shall be used in the subsequent analysis.
\begin{lemma}[\cite{WW'88}] \label{lemma:Dd} 
 Let $\{q_{i, j}\}, \{v_{i+1/2, j}\}$ and $\{w_{i, j+1 / 2}\}$ be any grid functions defined on $\Pi_x^*\times \Pi_y^*$, $\Pi_x\times \Pi_y^*$ and $\Pi_x^*\times \Pi_y$, such that $v_{1/2, j}=$ $v_{N_x+1/2, j}=w_{i, 1/2}=w_{i, N_y+1 / 2}=0$. Then there holds
$$
\begin{aligned}
 (q, D_x v)_{\rm M}=-(d_x q, v)_x, ~~ (q, D_y w)_{\rm M}=-(d_y q, w)_y .
\end{aligned}
$$
\end{lemma}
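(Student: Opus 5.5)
The identity is a discrete summation by parts (Abel summation) done one grid line at a time. I prove the $x$-identity; the $y$-identity follows by the same argument with the roles of $(i,h)$ and $(j,k)$ exchanged.

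First, expand the left-hand side using the definitions of $(\cdot,\cdot)_{\rm M}$ and $D_x$. The cell width $h_i$ in the inner-product weight cancels the denominator of $D_x$, giving
\begin{align*}
(q, D_x v)_{\rm M} = \sum_{j=1}^{N_y} k_j \sum_{i=1}^{N_x} q_{i,j}\bigl(v_{i+1/2,j}-v_{i-1/2,j}\bigr).
\end{align*}
Next, fix $j$ and split the inner sum into two sums. In the sum involving $v_{i-1/2,j}$, shift the index $i \to i+1$, so that both sums run over the face values $v_{i+1/2,j}$. The two boundary face terms that remain are $q_{N_x,j}v_{N_x+1/2,j}$ and $-q_{1,j}v_{1/2,j}$, and both vanish by the hypothesis $v_{1/2,j}=v_{N_x+1/2,j}=0$. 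What is left is
\begin{align*}
\sum_{i=1}^{N_x} q_{i,j}\bigl(v_{i+1/2,j}-v_{i-1/2,j}\bigr) = -\sum_{i=1}^{N_x-1}\bigl(q_{i+1,j}-q_{i,j}\bigr)v_{i+1/2,j}.
\end{align*}
Then multiply and divide each term by $h_{i+1/2}$, which turns the difference $q_{i+1,j}-q_{i,j}$ into $h_{i+1/2}\,[d_x q]_{i+1/2,j}$. Restoring the weight $k_j$ and summing over $j$ reproduces exactly the definition of $-(d_x q, v)_x$, including its index range $i=1,\ldots,N_x-1$.

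The only step that needs care is the index bookkeeping in the shifted sum. One must check that the interior faces $i+1/2$, $i=1,\ldots,N_x-1$, are precisely those covered by $(\cdot,\cdot)_x$, and that the non-uniform mesh causes no mismatch. It causes none: the weights $h_i$ cancel before the shift, and the weights $h_{i+1/2}$ are introduced only afterwards.
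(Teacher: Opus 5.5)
Your summation-by-parts argument is correct: the $h_i$ weights cancel, the index shift leaves only the two boundary face terms (which vanish by hypothesis), and the remaining interior faces $i=1,\ldots,N_x-1$ match the range and the $h_{i+1/2}$ weights of $(\cdot,\cdot)_x$. The paper gives no proof of its own and simply cites Weiser and Wheeler. The standard proof of this identity is this same discrete Abel summation, so your route is essentially the expected one.
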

\begin{lemma}[\cite{BGM'07}] \label{trun:Dp} Let $p\in H^3(\Omega)$. Then there holds
\begin{equation*}
 \begin{aligned} 
     \p_x p (x_{i}, y_j)= \bigl[D_x p ]_{i, j}      + \vartheta^x_{i,j}(p),\quad
     \p_y p (x_i, y_j) = \bigl[D_y p ]_{i, j}   +\vartheta^y_{i,j}(p),
\end{aligned}
\end{equation*}
such that
\begin{equation*}
\|\vartheta^x(p)\|_{\rm M}  \leq K  \|p\|_{H^{3}(\Omega)}h^2,
\quad \|\vartheta^y(p)\|_{\rm M}  \leq K \|p\|_{H^{3}(\Omega)} h^2.
\end{equation*}
\end{lemma}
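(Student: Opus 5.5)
The plan is to treat the truncation error cell by cell as a bounded linear functional that annihilates quadratic polynomials, to apply a Bramble--Hilbert argument on each cell $I_i\times J_j$ with $I_i=(x_{i-1/2},x_{i+1/2})$ and $J_j=(y_{j-1/2},y_{j+1/2})$, and then to sum the local bounds in the discrete norm $\|\cdot\|_{\rm M}$. I describe the $x$-direction only; the $y$-direction is symmetric. Here $[D_x p]_{i,j}=\bigl(p(x_{i+1/2},y_j)-p(x_{i-1/2},y_j)\bigr)/h_i$ is the difference of point values. The key structural fact is that $x_i$ is the exact midpoint of $I_i$, so this is a symmetric central difference even on the non-uniform grid, and no first-order error term appears.

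\emph{Step 1 (formal consistency).} Expand $p(x_{i\pm1/2},y_j)$ about $x_i$ by Taylor's formula with integral remainder. The even-order terms cancel by symmetry, which gives
$$\vartheta^x_{i,j}(p)=-\frac{1}{2h_i}\int_{I_i}\Bigl(\tfrac{h_i}{2}-|s-x_i|\Bigr)^2\,\p_x^3 p(s,y_j)\,ds .$$
This shows that $\vartheta^x_{i,j}$ vanishes on every $p$ that is a polynomial of degree $\le 2$ in $x$, with arbitrary dependence on $y$. The formula cannot be used directly, however, because $\p_x^3p(\cdot,y_j)$ has no trace on a line when $p$ is merely in $H^3(\Omega)$.

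\emph{Step 2 (boundedness and scaling).} In two dimensions $H^3(I_i\times J_j)\hookrightarrow C^1(\overline{I_i\times J_j})$, so $\vartheta^x_{i,j}$ is a bounded linear functional on $H^3(I_i\times J_j)$. Map the cell to the reference square $\hat Q=(-1/2,1/2)^2$ by $x=x_i+h_i\xi$, $y=y_j+k_j\eta$, and set $\hat p(\xi,\eta)=p(x,y)$. Then $\vartheta^x_{i,j}(p)=h_i^{-1}\hat\vartheta(\hat p)$, where $\hat\vartheta$ is a fixed bounded functional on $H^3(\hat Q)$ that annihilates $\mathcal P_2$. The Bramble--Hilbert lemma gives $|\hat\vartheta(\hat p)|\le C|\hat p|_{H^3(\hat Q)}$. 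For a quasi-uniform (shape-regular) mesh, i.e.\ $h_i\simeq k_j\simeq h$, the scaling relation $|\hat p|_{H^3(\hat Q)}\le C h^{2}|p|_{H^3(I_i\times J_j)}$ then yields
$$|\vartheta^x_{i,j}(p)|\le C\,h\,|p|_{H^3(I_i\times J_j)} .$$

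\emph{Step 3 (summation).} Since $h_ik_j\le h^2$,
$$\|\vartheta^x(p)\|_{\rm M}^2=\sum_{i,j}h_ik_j|\vartheta^x_{i,j}|^2\le C h^2\cdot h^2\sum_{i,j}|p|^2_{H^3(I_i\times J_j)}=Ch^4|p|^2_{H^3(\Omega)} .$$
Taking square roots gives the stated bound $K\|p\|_{H^3(\Omega)}h^2$.

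The main obstacle is Step 2 when the cells are strongly anisotropic, i.e.\ when $h_i/k_j$ is unbounded. Then the isotropic scaling produces factors such as $k_j^3/h_i$ coming from the pure $\p_y^3$ derivatives. To handle this, I would exploit the larger kernel identified in Step 1: $\hat\vartheta$ kills every function of the form $a(\eta)+\xi b(\eta)+\xi^2c(\eta)$. An anisotropic Bramble--Hilbert lemma (in the style of Apel) should then bound $|\hat\vartheta(\hat p)|$ using only derivatives $\p^\alpha\hat p$ with $\alpha_1\ge 2$ and at least one $\p_\xi$ beyond second order, so that each scales with at least $h_i^{2}$ in the $\xi$-direction. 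If this refinement proves delicate, the fallback is to impose the mild quasi-uniformity assumption implicit in \cite{BGM'07}, under which Steps 2 and 3 go through as written.
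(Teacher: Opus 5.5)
The paper does not prove this lemma; it simply quotes it from \cite{BGM'07}. There is therefore no in-paper argument to compare with, and your proposal stands as a correct, self-contained proof.

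Your Step 1 representation is right. The reference functional $\hat\vartheta(\hat p)=\partial_\xi\hat p(0,0)-\hat p(\tfrac12,0)+\hat p(-\tfrac12,0)$ is the same for every cell precisely because $x_i,y_j$ are cell midpoints, so the Bramble--Hilbert constant is uniform. Steps 2--3 then give the stated $H^3$ bound on quasi-uniform grids. That already covers every use of the lemma in the paper, since Section~\ref{sec:erro-estimates} works on uniform spatial grids.

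In fact, because \eqref{eq:regular} provides $W^{4,\infty}$ spatial regularity, your Step 1 formula alone gives $|\vartheta^x_{i,j}(p)|\le \tfrac{h_i^2}{24}\|\partial_x^3 p\|_{L^\infty}$ on any tensor grid. That is all the truncation analysis needs. What the Bramble--Hilbert route buys is the minimal $H^3$ hypothesis actually stated, where, as you correctly note, $\partial_x^3 p$ has no trace on the lines $y=y_j$.

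Your proposed fix for anisotropic cells does go through, and no general anisotropic lemma is needed. Since $\hat p(\tfrac12,0)-\hat p(-\tfrac12,0)=\int_{-1/2}^{1/2}\partial_\xi\hat p(s,0)\,ds$, you have $\hat\vartheta(\hat p)=G(\partial_\xi\hat p)$ with $G(w)=w(0,0)-\int_{-1/2}^{1/2}w(s,0)\,ds$. This $G$ is bounded on $H^2(\hat Q)\hookrightarrow C(\overline{\hat Q})$ and vanishes on $\mathcal{P}_1$. Bramble--Hilbert then gives $|\hat\vartheta(\hat p)|\le C|\partial_\xi\hat p|_{H^2(\hat Q)}$, which involves only $\partial_\xi^3\hat p$, $\partial_\xi^2\partial_\eta\hat p$ and $\partial_\xi\partial_\eta^2\hat p$, with no $\partial_\eta^3\hat p$. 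The scaling then yields
\[
h_ik_j\,|\vartheta^x_{i,j}(p)|^2\le C\bigl(h_i^4+h_i^2k_j^2+k_j^4\bigr)|p|^2_{H^3(I_i\times J_j)}\le Ch^4|p|^2_{H^3(I_i\times J_j)},
\]
so Step 3 holds on arbitrary tensor-product grids without any quasi-uniformity assumption. Your sharper version with $\alpha_1\ge 2$ is also true, via $\hat\vartheta(\hat p)=-\int_{-1/2}^{1/2}\int_0^s\partial_\xi^2\hat p(t,0)\,dt\,ds$ and the trace theorem on $H^1(\hat Q)$. However, $\alpha_1\ge 1$ is already enough.
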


Finally, given values $\{p_{i,j} = p(x_i,y_j)\}$, for any points $(x, y) \in [x_i, x_{i+1}]\times [y_j, y_{j+1}]$, $i=1,\ldots, N_x-1$, $j=1,\ldots, N_y-1$, we introduce the piecewise bilinear interpolation function $\ell_h p(x, y)$ by
 \begin{equation}\label{Operator:bi}
\begin{aligned}
	\ell_h p(x, y)& =    \f{(x_{i+1}-x)(y_{j+1}-y)}{(x_{i+1}-x_i)(y_{j+1}-y_j)}p_{i, j} 
           + \f{(x-x_i)(y_{j+1}-y)}{(x_{i+1}-x_i)(y_{j+1}-y_j)} p_{i+1, j} \\
	& \quad + \f{(x_{i+1}-x)(y-y_j)}{(x_{i+1}-x_i)(y_{j+1}-y_j)}  p_{i, j+1} 
         + \f{(x-x_i)(y-y_j)}{(x_{i+1}-x_i)(y_{j+1}-y_j)} p_{i+1, j+1}.
\end{aligned}
\end{equation}

\begin{lemma}[\cite{DWW'98}]\label{lem:interp} 
Assume that $p \in W^{2, \infty}(\Omega)$, then we have 
\begin{align*}
\left\|\ell_h p-p\right\|_{L^\infty} \leq K  \|p\|_{W^{2, \infty}(\Omega)} h^2.
\end{align*}
\end{lemma}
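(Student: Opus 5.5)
The approach is the classical one: bound the interpolation error on each cell of the dual grid by Taylor expansion about the cell, then take the maximum over cells. Since $\ell_h p$ is defined cell-by-cell on $Q_{ij}:=[x_i,x_{i+1}]\times[y_j,y_{j+1}]$, it suffices to show that for every $(x,y)\in Q_{ij}$
\[
|\ell_h p(x,y)-p(x,y)|\le K\|p\|_{W^{2,\infty}(\Omega)}h^2 ,
\]
with $K$ independent of $i,j$ and of the grid. The mesh sizes satisfy $x_{i+1}-x_i=h_{i+1/2}\le h$ and $y_{j+1}-y_j=k_{j+1/2}\le h$, so every relevant length is bounded by $h$.

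\textbf{Step 1 (tensor-product splitting).} Write $\ell_h=\ell^x\ell^y$, where $\ell^x$ and $\ell^y$ are the one-dimensional linear interpolants in $x$ (nodes $x_i,x_{i+1}$) and in $y$ (nodes $y_j,y_{j+1}$). Then
\[
p-\ell_h p=(p-\ell^x p)+\ell^x(p-\ell^y p).
\]

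\textbf{Step 2 (one-dimensional error).} The classical estimate for linear interpolation (Rolle's theorem, or Taylor expansion with integral remainder) gives, for fixed $y$,
\[
|p(x,y)-\ell^x p(x,y)|\le \tfrac18 (x_{i+1}-x_i)^2\|\p_{xx}p\|_{L^\infty}\le \tfrac18 h^2\|p\|_{W^{2,\infty}}.
\]
The same bound holds for $p-\ell^y p$ in $y$ at every fixed $x$. To avoid pointwise-versus-a.e. issues, I would first note that $W^{2,\infty}(\Omega)$ embeds into $C^{1,1}(\overline\Omega)$. Hence $\p_x p(\cdot,y)$ is Lipschitz with constant $\|p\|_{W^{2,\infty}}$, and the Taylor remainder argument applies with the Lipschitz constant in place of $\sup|\p_{xx}p|$.

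\textbf{Step 3 (stability of $\ell^x$).} The operator $\ell^x$ forms a convex combination of nodal values, so $\|\ell^x g\|_{L^\infty(Q_{ij})}\le \|g\|_{L^\infty(Q_{ij})}$. Applying this with $g=p-\ell^y p$ and combining with Steps 1 and 2 gives the bound $\frac14 h^2\|p\|_{W^{2,\infty}}$ on each cell.

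\textbf{Boundary strips.} The only subtlety I anticipate is the region between $\p\Omega$ and the outermost cell centers, such as $[a^x,x_1]$. The definition in \eqref{Operator:bi} covers only $[x_1,x_{N_x}]\times[y_1,y_{N_y}]$. I would either read the estimate on that subdomain, or extend $\ell_h p$ constantly in the normal direction (equivalently, by the formula with the nearest nodes). In the latter case the same Taylor argument applies, but with only a first-order expansion. That is still $O(h^2)$ if $\p_{\bm n}p=0$ on $\p\Omega$, because the distance to the boundary is $\le h/2$ and $|\p_x p|\le \|p\|_{W^{2,\infty}} h/2$ there. I expect this boundary bookkeeping to be the main, though minor, obstacle; the interior estimate is routine.
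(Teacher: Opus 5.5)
Your proof is correct and is the standard tensor-product argument. The paper gives no proof of its own; it simply quotes the result from \cite{DWW'98}, so there is no competing route to compare with. The three ingredients combine to give $K=\frac14$ on $[x_1,x_{N_x}]\times[y_1,y_{N_y}]$, which is exactly the region where \eqref{Operator:bi} defines $\ell_h p$:
the splitting $p-\ell_h p=(p-\ell^x p)+\ell^x(p-\ell^y p)$;
the one-dimensional bound $\frac18 h_{i+1/2}^2\|\p_{xx}p\|_{L^\infty}$, valid for $C^{1,1}$ functions;
the $L^\infty$-stability of the convex-combination operator $\ell^x$.

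One correction to your boundary remark. Extending $\ell_h p$ constantly in the normal direction is \emph{not} equivalent to evaluating \eqref{Operator:bi} with the nodes of the outermost cell; the latter is linear extrapolation.
\begin{itemize}
\item Constant extension gives only $O(h)$ for a general $p\in W^{2,\infty}(\Omega)$, and the lemma does not assume $\p_{\bm n}p=0$.
\item Bilinear extrapolation gives $O(h^2)$ with no boundary condition. Since $x_1-a^x=h_1/2<h_{3/2}$, the sum of the absolute values of the extrapolation weights stays below $3$ in each direction. Your Taylor/stability argument then goes through with a larger constant.
\end{itemize}

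In any case, the paper evaluates $\ell_h$ only at cell centers and at interior edge points $(x_{i+1/2},y_j)$ and $(x_i,y_{j+1/2})$. All of these lie in $[x_1,x_{N_x}]\times[y_1,y_{N_y}]$, and the boundary-edge fluxes vanish by the homogeneous Neumann condition. So your option (a), reading the estimate on that subdomain, is all that is needed.
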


\subsection{The PP-MC-PBCFD scheme}\label{sec:pp-mc-I}
At each time step, the spatial discretization of the two-species Keller–Segel chemotaxis system \eqref{model:KS}–\eqref{model:KS:ic} is carried out using the non-uniform grid BCFD method. The primal scalar variables—namely, the cell densities and the chemoattractant concentration—are discretized on the staggered grid $\Pi_x^*\times \Pi_y^*$, whereas the flux/gradient variables are approximated on $\Pi_x\times \Pi_y^*$ (for $x$-direction) and $\Pi_x^*\times \Pi_y$ (for $y$-direction). For temporal discretization, the cell densities are computed at the staggered time points $t_{n+1/2}$ using a prediction-then-projection approach, and the chemoattractant concentration is subsequently solved at the primal time levels $t_{n+1}$. 

To be specific, let $ \mbX_h $ be the set of real-valued grid functions defined on $\Pi_x^*\times \Pi_y^*$, i.e.,
$$
\mbX_h:=\left\{v_h \mid v_h= \{v_{i,j}\}, ~~ i=1, \ldots, N_x, ~ j=1, \ldots, N_y\right\}.
$$ 
We denote the numerical solutions of the chemotaxis system \eqref{model:KS}–\eqref{model:KS:ic} by $\{u_h^{n+1/2},v_h^{n+1/2},c_h^{n+1}\}\in \mbX_h \times \mbX_h \times \mbX_h$. Moreover, define a subspace of $\mbX_h$ as 
$$
\mbX_h[q]:=\left\{q_h \in \mbX_h \mid q_h \ge 0, ~ M_h[q_h^{n+1/2}] = M_h[q_h^0]\right\},
$$
where $M_h[q]:=  (q, 1 )_{\mrM}$ represents the discrete version of mass. In addition, we denote the average density and concentration solutions at the temporal midpoints $(t_{n+1/2}+t_{n-1/2})/{2} 
$ and $t_{n+1/2}$, respectively, as 
\[
\overline{u}_h^{n} := \f{1}{2}({u}_h^{n+1/2} + u_h^{n-1/2}), \quad \overline{v}_h^{n} := \f{1}{2}({v}_h^{n+1/2} + v_h^{n-1/2}), \quad \overline{c}_h^{n+1/2} := \f{1}{2}(c_h^{n+1}+c_h^n).
\]   

Now, let the initial approximations be given by $\{u_h^{0},v_h^{0},c_h^{0}\}:=\{\ell_h u^{0}, \ell_h v^{0},\ell_h c^{0}\}
\in \mbX_h[u] \times \mbX_h[v] \times \mbX_h$. For $n \ge 0$, we propose the positivity-preserving, mass-conservative, projection-based BCFD scheme, termed the PP-MC-PBCFD scheme, as follows: 

{\bf Step 1}: For $n=0$, solve the predicted density solutions $\{\widetilde{u}_h^{1/2},\widetilde{v}_h^{1/2}\}\in \mbX_h \times \mbX_h$ at $t=t_{1/2}$ via the semi-implicit Euler-BCFD method:
	\begin{subequations}\label{model:KS:schemeI0}
		\begin{align}
			\f{\widetilde{u}_h^{1/2}-u_h^0}{\tau_0} &= D_x (d_x \widetilde{u}_h^{1/2}) 
              + D_y(d_y \widetilde{u}_h^{1/2} ) 
            - D_x([\ell_h \widetilde{u}_h^{1/2}][d_x c_h^0]) 
             - D_y([\ell_h \widetilde{u}_h^{1/2}][d_y c_h^0]),\label{model:KS:sheme11}\\
			\f{\widetilde{v}_h^{1/2}-v_h^0}{\tau_0} &=  D_x(d_x \widetilde{v}_h^{1/2})
               +  D_y(d_y \widetilde{v}_h^{1/2}) 
            - D_x([\ell_h \widetilde{v}_h^{1/2}][d_x c_h^0]) - D_y([\ell_h \widetilde{v}_h^{1/2}][d_y c_h^0]).\label{model:KS:sheme22}
		\end{align}
	\end{subequations}
For $n\ge 1$, given $\{u_h^{n-1/2},v_h^{n-1/2}, c_h^{n-1}, c_h^{n}\}\in \mbX_h[u] \times \mbX_h[v] \times \mbX_h \times \mbX_h$, solve the predicted density solutions $\{\widetilde{u}_h^{n+1/2},\widetilde{v}_h^{n+1/2}\}\in \mbX_h \times \mbX_h$ at the staggered time point $t=t_{n+1/2}$ via the semi-implicit CN-BCFD method: 
\begin{subequations}\label{model:KS:shemeI}
	\begin{align}
		\f{\widetilde{u}_h^{n+1/2}-u_h^{n-1/2}}{\tau_{n}} & =  D_x(d_x \overline{\widetilde{u}}_h^{n}) 
           +  D_y(d_y \overline{\widetilde{u}}_h^{n})   
           - D_x \bigl([\ell_h \overline{\widetilde{u}}_h^{n}][d_x c_h^{*,n}]\bigr) 
           - D_y \bigl([\ell_h \overline{\widetilde{u}}_h^{n}][d_y c_h^{*,n}]\bigr),\label{model:KS:shemeIa}\\
		\f{\widetilde{v}_h^{n+1/2}-v_h^{n-1/2}}{\tau_{n}} &= D_x(d_x \overline{\widetilde{v}}_h^{n})
           + D_y(d_y \overline{\widetilde{v}}_h^{n})
           -  D_x \bigl([\ell_h \overline{\widetilde{v}}_h^{n}][d_x c_h^{*,n}]\bigr)  
           - D_y \bigl([\ell_h \overline{\widetilde{v}}_h^{n}][d_y c_h^{*,n}]\bigr),\label{model:KS:shemeIb}
	\end{align}
\end{subequations}
where $\overline{\widetilde{u}}_h^{n} := \f{1}{2}(\widetilde{u}_h^{n+1/2} + u_h^{n-1/2})$ and $\overline{\widetilde{v}}_h^{n} := \f{1}{2}(\widetilde{v}_h^{n+1/2} + v_h^{n-1/2})$ represent the average values at the midpoint $t_*:=\frac{t_{n+1/2}+t_{n-1/2}}{2}
$, and the concentration variable in the nonlinear part is approximated by the linear extrapolation formula, i.e., $c_h^{*,n} := \frac{t_n-t_*}{\tau_{n-1/2}}c_h^{n-1}+\frac{t_*-t_{n-1}}{\tau_{n-1/2}}c_h^n$ for $n \ge 1$.

{\bf Step 2}: Project the intermediate density solutions $\{\widetilde{u}_h^{n+1/2},\widetilde{v}_h^{n+1/2}\}$ from $\mbX_h \times \mbX_h$ to $\mbX_h[u] \times \mbX_h[v]$ via the standard discrete $L^2$ projection, and obtain the corrected density solutions $\{u_h^{n+1/2},v_h^{n+1/2}\}\in \mbX_h[u] \times \mbX_h[v]$ such that
\begin{equation}\label{model:KS:shemeI2}
	\begin{aligned}
		& \min_{\{u_h^{n+1/2}, v_h^{n+1/2 }\}\in \mbX_h[u] \times \mbX_h[v]}  \f{1}{2} \left( \| u_h^{n+1/2} - \widetilde{u}_h^{n+1/2} \|^2 + \| v_h^{n+1/2} - \widetilde{v}_h^{n+1/2} \|^2 \right).
	\end{aligned}
\end{equation}

{\bf Step 3}: Solve the concentration solution $c_h^{n+1}\in \mbX_h$ at the primal time level $t=t_{n+1}$ via the CN-BCFD scheme:
\begin{equation}\label{model:KS:shemeI3}
	\begin{aligned}
        d_\tau c_h^{n+1/2}= D_x(d_x \overline{c}_h^{n+1/2}) + D_y(d_y \overline{c}_h^{n+1/2})
        - \overline{c}_h^{n+1/2} +	u_h^{n+1/2} + v_h^{n+1/2}.
	\end{aligned}
\end{equation}

\begin{remark} 
The staggered-in-time discretization approach fully decouples the computations of the multi-species Keller–Segel chemotaxis system while simultaneously facilitating linearization, consequently yielding a substantial improvement in computational efficiency. Moreover, the variable-step time-marching algorithm enables the development of adaptive time-stepping strategies for long-term energy dissipative simulations with preservation of positivity, and the non-uniform grid BCFD method provides more accurate and efficient simulations of chemotactic dynamics, particularly in the presence of rapid blow-up phenomenon. Numerical experiments presented in Section \ref{sec:num} support the efficiency and accuracy of the proposed scheme.
\end{remark}

\begin{remark}\label{Rem:KKT} Note that {\bf Step 1} cannot preserve positivity of the cell density solutions. Therefore, in {\bf Step 2} we adopt an $L^2$ projection to enforce both positivity and mass conservation. In fact, \eqref{model:KS:shemeI2} is a convex minimization problem and can be represented by the following Karush--Kuhn-Tucker (KKT) conditions:
\begin{subequations} 
	\begin{align}
	u_h^{n+1/2} = \widetilde{u}_h^{n+1/2} + \lambda_h^{n+1/2} - \xi^{n+1/2}, \quad v_h^{n+1/2} = \widetilde{v}_h^{n+1/2} + \eta_h^{n+1/2} - \theta^{n+1/2}, \label{model:KS:shemeI2a}
\\
	\lambda_h^{n+1/2} u_h^{n+1/2} = 0, \quad \eta_h^{n+1/2} v_h^{n+1/2} = 0, \quad \lambda_h^{n+1/2} \geq 0, \quad \eta_h^{n+1/2} \geq 0, \label{model:KS:shemeI2b}
\\
    M_h[u_h^{n+1/2}]= M_h[u_h^{0}], \quad M_h[v_h^{n+1/2}]= M_h[v_h^{0}], \label{model:KS:shemeI2c}
	\end{align}
\end{subequations}
where the time-dependent Lagrange multipliers $\{\xi^{n+1/2},\theta^{n+1/2}\}$ are introduced for the mass conservation constraint \eqref{model:KS:shemeI2c}, and the space-time-dependent Lagrange multipliers $\{\lambda_h^{n+1/2},\eta_h^{n+1/2}\}\in \mbX_h \times \mbX_h $ are introduced to enforce positivity of the numerical density solutions $\{u_h^{n+1/2},v_h^{n+1/2}\}$. 
By the complementary condition in \eqref{model:KS:shemeI2b}, the computations of $\{u_h^{n+1/2}, v_h^{n+1/2}\}$ can be expressed as
\begin{equation*} 
   \begin{aligned}
	(u_h^{n+1/2},\lambda_h^{n+1/2}) = 
	\begin{cases}
		\bigl(\widetilde{u}_h^{n+1/2} - \xi^{n+1/2}, 0 \bigr),    &\quad~~ \text{if}~ \widetilde{u}_h^{n+1/2} - \xi^{n+1/2} \geq 0,\\
		\bigl(0,-(\widetilde{u}_h^{n+1/2} - \xi^{n+1/2}) \bigr),  &\quad~~ \text{otherwise},
	\end{cases} \\
	(v_h^{n+1/2},\eta_h^{n+1/2}) = 
	\begin{cases}
		\bigl(\widetilde{v}_h^{n+1/2} - \theta^{n+1/2}, 0\bigr),    &\quad~~ \text{if}~  \widetilde{v}_h^{n+1/2} - \theta^{n+1/2} \geq 0,\\
		\bigl(0,-(\widetilde{v}_h^{n+1/2} - \theta^{n+1/2}) \bigr).  &\quad~~ \text{otherwise},
	\end{cases}
    \end{aligned}
\end{equation*}
Meanwhile, by \eqref{model:KS:shemeI2c}, the Lagrange multipliers $\xi^{n+1/2}, \theta^{n+1/2} \in \mathbb{R}$ are determined by the mass conservation constraint, which reduce to solving the following nonlinear single-variable algebraic equations:
\begin{equation}\label{eq:Lagrange:nonlinear}
    \begin{aligned}
	F(\xi^{n+1/2}) :=  \sum_{i=1}^{N_x}\sum_{j = 1}^{N_y}h_ik_j (\widetilde{u}_{i,j}^{n+1/2} - \xi^{n+1/2})^{+} - (u_h^0, 1)_{\mrM}=0, \\
    G(\theta^{n+1/2}):=\sum_{i=1}^{N_x}\sum_{j = 1}^{N_y} h_ik_j(\widetilde{v}_{i,j}^{n+1/2} - \theta^{n+1/2})^{+} - (v_h^0, 1)_{\mrM}=0, 
    \end{aligned}
\end{equation}
where $^+$ means the positive part. As suggested in Refs. \cite{TC'24,CS'22}, the above nonlinear algebraic equations can be solved efficiently by the semismooth Newton method or the secant method in only a few iterations.
\end{remark}

\begin{lemma}\label{lem:positivity:LagM} For the PP-MC-PBCFD scheme, there holds
	$$
    \xi^{n+1/2}\geq 0,~~\theta^{n+1/2}\geq 0,  \quad n \ge 0.
    $$
\end{lemma}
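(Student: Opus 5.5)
The key observation is that the prediction step conserves mass exactly. Once that is established, the sign of $\xi^{n+1/2}$ follows from monotonicity of the piecewise-linear function $F$ in \eqref{eq:Lagrange:nonlinear}. The argument for $\theta^{n+1/2}$ is identical, with $v$ and $G$ in place of $u$ and $F$, so I describe only the $u$ case.

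\textbf{Step 1: the predicted solution conserves mass.} Take the discrete inner product of \eqref{model:KS:sheme11} (for $n=0$) or \eqref{model:KS:shemeIa} (for $n\ge1$) with the constant grid function $1$. Each right-hand-side term has the form $D_x(\cdot)$ or $D_y(\cdot)$ of a flux. The diffusive fluxes $d_x\overline{\widetilde u}_h^{n}$ and $d_y\overline{\widetilde u}_h^{n}$ vanish on the boundary by the homogeneous Neumann condition. The chemotactic fluxes $[\ell_h\cdot][d_x c_h^{*,n}]$ and $[\ell_h\cdot][d_y c_h^{*,n}]$ also vanish there, because $d_xc$ and $d_yc$ are set to zero at the boundary edges. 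Lemma \ref{lemma:Dd} with $q\equiv1$ then gives $(1,D_x\hat g)_{\mrM}=-(d_x1,\hat g)_x=0$, and likewise in $y$. Hence
\[
M_h[\widetilde u_h^{n+1/2}]=M_h[u_h^{n-1/2}]=M_h[u_h^0],
\]
where the last equality holds by induction, since $u_h^{n-1/2}\in\mbX_h[u]$ (for $n=0$ read $u_h^0$).

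\textbf{Step 2: sign argument via $F$.} The function $F(\xi)=\sum h_ik_j(\widetilde u_{i,j}^{n+1/2}-\xi)^+-M_h[u_h^0]$ is continuous and nonincreasing in $\xi$. Using $s^+\ge s$ and Step 1,
\[
F(0)=\sum h_ik_j(\widetilde u_{i,j}^{n+1/2})^+-M_h[u_h^0]\ \ge\ \sum h_ik_j\widetilde u_{i,j}^{n+1/2}-M_h[u_h^0]=0.
\]
We argue by contradiction. Suppose $\xi^{n+1/2}<0$. Then $\widetilde u_{i,j}-\xi^{n+1/2}>\widetilde u_{i,j}$ at every node, so $(\widetilde u_{i,j}-\xi^{n+1/2})^+\ge \widetilde u_{i,j}-\xi^{n+1/2}$, and therefore
\[
F(\xi^{n+1/2})\ \ge\ M_h[\widetilde u_h^{n+1/2}]-\xi^{n+1/2}|\Omega|-M_h[u_h^0]\ =\ -\xi^{n+1/2}|\Omega|>0.
\]
This contradicts $F(\xi^{n+1/2})=0$, so $\xi^{n+1/2}\ge0$.

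An alternative route uses the KKT system directly. Summing \eqref{model:KS:shemeI2a} against $1$ and applying \eqref{model:KS:shemeI2c} together with Step 1 gives $\xi^{n+1/2}|\Omega|=M_h[\lambda_h^{n+1/2}]\ge0$, since $\lambda_h^{n+1/2}\ge0$. I would present this version, as it is shorter and avoids any uniqueness questions about roots of $F$.

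\textbf{Main obstacle.} The only delicate point is Step 1: checking that the boundary flux values in the scheme are set to zero, for both the diffusion and the chemotaxis terms. Lemma \ref{lemma:Dd} requires exactly this, and it is also what makes the $n=0$ Euler step and the $n\ge1$ CN step conserve mass. Once this is verified, the remainder is immediate.
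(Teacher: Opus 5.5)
Your proposal is correct. The KKT version you say you would present is exactly the paper's argument: the paper adds \eqref{model:KS:shemeIa} to the first relation in \eqref{model:KS:shemeI2a}, tests with $1$, and uses Lemma \ref{lemma:Dd} together with $M_h[u_h^{n+1/2}]=M_h[u_h^0]$ to obtain $\xi^{n+1/2}|\Omega|=(\lambda_h^{n+1/2},1)_{\mrM}\ge 0$. Your Step 1 just makes explicit the mass conservation of the prediction step, including the $n=0$ Euler step, which the paper's proof leaves implicit; your $F$-monotonicity argument is a valid but unnecessary alternative.
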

\begin{proof} 
We only prove the first conclusion, as the second one can be derived in a very similar way. By adding \eqref{model:KS:shemeIa} and the first equation of \eqref{model:KS:shemeI2a} together, we have
\begin{equation}\label{eq:pp-xi-lambda}
	\begin{aligned}
		\f{{u}_h^{n+1/2}-u_h^{n-1/2}}{\tau_{n}} - \f{\lambda_h^{n+1/2} - \xi^{n+1/2}}{\tau_{n}} &= D_x(d_x \overline{\widetilde{u}}_h^{n}) + D_y(d_y \overline{\widetilde{u}}_h^{n}) \\
        &\quad- D_x \bigl([\ell_h \overline{\widetilde{u}}_h^{n}][d_x c_h^{*,n}]\bigr) 
        - D_y \bigl([\ell_h \overline{\widetilde{u}}_h^{n}][d_y c_h^{*,n}]\bigr).
	\end{aligned}
\end{equation}
Then, according to Lemma \ref{lemma:Dd} and noting the fact that $u_h^{n+1/2}\in \mbX_h[u]$, we take the inner product of \eqref{eq:pp-xi-lambda} with 1 on both sides to derive
	$$
    (\lambda_h^{n+1/2} - \xi^{n+1/2},1)_{\mrM}=0 \Longrightarrow \xi^{n+1/2} = \f{(\lambda_h^{n+1/2},1)_{\mrM}}{|\Omega|} \geq 0,
    $$
 where \eqref{model:KS:shemeI2b} is applied in the last step. Thus, the conclusion is proved.
\end{proof}

Below, we present a sufficient condition for the characterization of a non-singular M-matrix, which is the main tool for establishing positivity of the chemoattractant concentration variable. 
\begin{lemma}[\cite{HZ'23}]\label{lem:M-matrix}
For a real square matrix $\mbb A$ with positive diagonal entries and non-positive off-diagonal entries,  it is a non-singular M-matrix if all the row sums of  $\mbb A$ are non-negative and at least one row sum is positive.
\end{lemma}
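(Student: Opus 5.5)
We plan to prove the lemma by writing $\mathbb A$ in Jacobi-splitting form and showing that the associated iteration matrix has spectral radius strictly less than one. This yields nonsingularity and entrywise nonnegativity of $\mathbb A^{-1}$ together.

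\textbf{Step 1 (splitting).} Write $\mathbb A=\mathbb D-\mathbb N$. Here $\mathbb D=\mathrm{diag}(a_{11},\dots,a_{mm})$ has positive diagonal, and $\mathbb N\ge 0$ entrywise, since the off-diagonal entries of $\mathbb A$ are non-positive. Set $\mathbb J:=\mathbb D^{-1}\mathbb N\ge 0$.

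The row-sum hypothesis $a_{ii}-\sum_{j\ne i}|a_{ij}|\ge 0$ reads $\sum_j \mathbb J_{ij}\le 1$ for every $i$. Equality fails, i.e.\ the sum is $<1$, in at least one row $i_0$. Consequently $\|\mathbb J\|_\infty\le 1$ and $\rho(\mathbb J)\le 1$.

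\textbf{Step 2 (the crucial estimate $\rho(\mathbb J)<1$).} This is the main obstacle. The hypotheses as literally stated are not sufficient without a connectivity assumption. For example, $\mathrm{diag}\bigl(\begin{smallmatrix}1&-1\\-1&1\end{smallmatrix},\,1\bigr)$ is singular although one row sum is positive.

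We will therefore use, as is implicit in \cite{HZ'23}, that $\mathbb A$ is irreducible. This holds for the matrices arising from the five-point BCFD stencil on the connected grid $\Pi_x^*\times\Pi_y^*$.

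Suppose $\mathbb J x=\mu x$ with $|\mu|\ge1$ and $x\neq 0$. Let $I=\{i: |x_i|=\|x\|_\infty\}$. For $i\in I$ we have
\[
\|x\|_\infty\le |\mu||x_i|\le \sum_j \mathbb J_{ij}|x_j|\le \|x\|_\infty\sum_j\mathbb J_{ij}\le \|x\|_\infty .
\]
Hence all inequalities are equalities. Thus $\sum_j\mathbb J_{ij}=1$, and $|x_j|=\|x\|_\infty$ whenever $\mathbb J_{ij}>0$.

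So $I$ is closed under the graph neighbours of $\mathbb A$. By irreducibility, $I$ is the whole index set, so it contains $i_0$. This contradicts $\sum_j\mathbb J_{i_0j}<1$. Therefore $\rho(\mathbb J)<1$.

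We also note a simpler route that suffices for the later application to Step 3. There, the term $-\overline c_h^{n+1/2}$ and the factor $1/\tau_{n+1/2}$ make every row sum strictly positive. Then $\|\mathbb J\|_\infty<1$ immediately, and no irreducibility argument is needed.

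\textbf{Step 3 (conclusion).} Since $\rho(\mathbb J)<1$, the Neumann series converges, and
\[
\mathbb A^{-1}=(\mathbb I-\mathbb J)^{-1}\mathbb D^{-1}=\sum_{k\ge0}\mathbb J^k\,\mathbb D^{-1}\ge 0 .
\]
Hence $\mathbb A$ is nonsingular with nonnegative inverse. Since $\mathbb A$ is a Z-matrix, this is exactly the statement that $\mathbb A$ is a non-singular M-matrix.

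Equivalently, writing $\mathbb A=s\mathbb I-\mathbb B$ with $s=\max_i a_{ii}$ and $\mathbb B\ge0$, the same argument gives $\rho(\mathbb B)<s$.
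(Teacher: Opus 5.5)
The paper does not prove this lemma; it quotes it from \cite{HZ'23}, so there is no in-paper argument to compare with. Your proposal therefore has to stand on its own, and it does.

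Your counterexample is valid. The matrix $\mathrm{diag}\bigl(\begin{smallmatrix}1&-1\\-1&1\end{smallmatrix},\,1\bigr)$ has positive diagonal, non-positive off-diagonal entries, row sums $0,0,1$, and is singular. So the lemma as printed is false, and some connectivity hypothesis is genuinely needed.

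Under irreducibility, your maximum-modulus eigenvector argument (Taussky's theorem for irreducibly diagonally dominant matrices) correctly gives $\rho(\mathbb J)<1$. The Neumann series then yields nonsingularity and $\mathbb A^{-1}\ge 0$ at once, so you do not need the separate inverse-positivity characterization. Your closure argument actually uses less than irreducibility: it only needs a directed path in the graph of $\mathbb A$ from every index to some row with positive row sum (weak chained diagonal dominance). That is exactly what fails in your counterexample.

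Your second remark is the one that matters for the paper. In the proof of Theorem~\ref{thm:MassConserve}, the Neumann discrete Laplacian $\mathbb B$ has zero row sums. Hence every row sum of $\mathbb P$ equals $1/\tau_{n+1/2}+1/2>0$, and its Jacobi matrix satisfies $\|\mathbb J\|_\infty<1$. Then $\mathbb P^{-1}=\sum_{k\ge0}\mathbb J^k\mathbb D^{-1}\ge0$ follows directly. This bypasses both Lemma~\ref{lem:M-matrix} and Lemma~\ref{lem:M-matrix:inverse}, so the non-negativity of $c_h^{n+1}$ is not affected by the missing hypothesis.
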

\begin{lemma}[\cite{Ple'77}]\label{lem:M-matrix:inverse}
Let $\mbb A\in \mathbb{R}^{n\times n}$ be a non-singular M-matrix, then $\mbb A$ is inverse-positive. That is, $\mbb A^{-1}$ exists and $\mbb A^{-1} \geq 0.$
\end{lemma}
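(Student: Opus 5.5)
This is the classical result that a non-singular M-matrix has a nonnegative inverse. I would prove it directly from the definition: $\mbb A = s\mbb I - \mbb B$ with $\mbb B \ge 0$ entrywise and $s > \rho(\mbb B)$, where $\rho$ denotes the spectral radius. If the paper's working characterization is instead the sign pattern of Lemma \ref{lem:M-matrix} (positive diagonal, non-positive off-diagonal, non-negative row sums), I would first convert it to this form.

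\textbf{Step 1: the splitting.} Choose $s \ge \max_i a_{ii}$ and set $\mbb B := s\mbb I - \mbb A$. The diagonal entries $s - a_{ii}$ are $\ge 0$, and the off-diagonal entries $-a_{ij}$ are $\ge 0$. Hence $\mbb B \ge 0$ and $\mbb A = s(\mbb I - s^{-1}\mbb B)$.

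\textbf{Step 2: the key inequality $\rho(\mbb B) < s$.} Under the standard definition this holds by assumption. Under the row-sum characterization it is the main obstacle. The row sums of $\mbb B$ are $s - \sum_j a_{ij} \le s$, so $\rho(\mbb B) \le \|\mbb B\|_\infty \le s$. To rule out equality I would use non-singularity. By Perron--Frobenius for nonnegative matrices, $\rho(\mbb B)$ is an eigenvalue of $\mbb B$. If $\rho(\mbb B) = s$, then $s$ is an eigenvalue of $\mbb B$, so $0$ is an eigenvalue of $\mbb A$, contradicting non-singularity. Therefore $\rho(\mbb B) < s$.

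\textbf{Step 3: Neumann series.} Since $\rho(s^{-1}\mbb B) < 1$, the series $\sum_{k\ge0} (s^{-1}\mbb B)^k$ converges to $(\mbb I - s^{-1}\mbb B)^{-1}$. Thus
$$
\mbb A^{-1} = s^{-1}\sum_{k\ge 0} s^{-k}\mbb B^k .
$$
Every term is entrywise nonnegative because $\mbb B \ge 0$, so $\mbb A^{-1}$ exists and $\mbb A^{-1} \ge 0$.

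The only nontrivial ingredient is Step 2, specifically the Perron--Frobenius fact that the spectral radius of a nonnegative matrix is itself an eigenvalue. I would cite this rather than reprove it. If one wants to avoid Perron--Frobenius, a fallback is to use irreducibility with the positive row sum and invoke Taussky's theorem to get non-singularity of $s\mbb I - \mbb B$ for every $s' \ge s$. Combined with continuity of $\rho(\mbb B)$ along $s$, this would show the spectrum of $s^{-1}\mbb B$ never touches $1$. I would try the Perron--Frobenius route first.
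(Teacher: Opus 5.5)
Your argument is correct, and it is essentially the standard proof: split $\mbb A = s\mbb I - \mbb B$ with $\mbb B \ge 0$ and $\rho(\mbb B) < s$, then expand $\mbb A^{-1} = s^{-1}\sum_{k\ge 0}(s^{-1}\mbb B)^k \ge 0$; the paper gives no proof of its own and only quotes the result from \cite{Ple'77}. Two small points: under the standard definition, use the splitting supplied by the definition rather than the $s$ you re-choose in Step 1, since $\rho(\mbb B)<s$ is only assumed for that given splitting. Your Taussky fallback is unnecessary, and it would in any case require an irreducibility hypothesis that the statement does not give you.
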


We present the main structure-preserving conclusion for the PP-MC-PBCFD scheme as follows.
\begin{theorem}\label{thm:MassConserve} Let the initial values $\{u_h^{0},v_h^{0},c_h^{0}\}
\in \mbX_h[u] \times \mbX_h[v] \times \mbX_h$. Then, the solutions to the PP-MC-PBCFD scheme \eqref{model:KS:schemeI0}--\eqref{model:KS:shemeI3} unconditionally satisfy $\{u_h^{n+1/2},v_h^{n+1/2},c_h^{n+1}\}\in \mbX_h[u] \times \mbX_h[v] \times \mbX_h$. Furthermore, if $c_h^{0} \geq 0$ and $\tau \le \min \bigl\{1, \f{2 h_{min}^2 k_{min}^2}{h_{min}^2+ k_{min}^2}\bigr\}$, where $h_{min}:=\min_{i}h_i$ and $ k_{min}:=\min_{j}k_j$, then the chemoattractant concentration solution is also non-negative, i.e., $c_h^{n+1} \ge 0$.
\end{theorem}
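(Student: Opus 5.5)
The first part needs almost no work, because Step 2 forces membership in the constraint set. Assume the predictor equations of Step 1 are uniquely solvable; this is proved in Section \ref{sec:erro-estimates}, and here we only need existence. Then $u_h^{n+1/2}$ and $v_h^{n+1/2}$ are, by construction, minimizers of \eqref{model:KS:shemeI2} over $\mbX_h[u]\times\mbX_h[v]$.

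It remains to check that this minimization problem is well posed for any $\widetilde{u}_h^{n+1/2},\widetilde{v}_h^{n+1/2}$. The feasible set is nonempty, since it contains $u_h^0,v_h^0\ge0$. It is closed and convex, being the intersection of the nonnegative orthant with an affine hyperplane. The objective is strictly convex. Hence a unique minimizer exists.

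Equivalently, the scalar equation $F(\xi)=0$ in \eqref{eq:Lagrange:nonlinear} always has a root. The function $F$ is continuous and nonincreasing. For $\xi\le\min\widetilde{u}$ we get $F(\xi)=(\widetilde u,1)_{\mrM}-|\Omega|\xi-(u_h^0,1)_{\mrM}\to+\infty$ as $\xi\to-\infty$. For $\xi\ge\max\widetilde u$ we get $F(\xi)=-(u_h^0,1)_{\mrM}\le 0$. So a root exists, and the resulting $u_h^{n+1/2}=(\widetilde u_h^{n+1/2}-\xi^{n+1/2})^+$ is nonnegative with the correct mass.

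Step 3 is a linear system for $c_h^{n+1}$. The same M-matrix argument used below shows it is uniquely solvable, so $c_h^{n+1}\in\mbX_h$. This works for every $n$ without any step-size restriction, which gives the unconditional claim.

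For the nonnegativity of $c_h^{n+1}$, argue by induction on $n$, assuming $c_h^n\ge0$. Multiply \eqref{model:KS:shemeI3} by $\tau_{n+1/2}$ and move every $c_h^{n+1}$ term to the left. This gives
\[
\mbb A c_h^{n+1}= \mbb B c_h^n + \tau_{n+1/2}(u_h^{n+1/2}+v_h^{n+1/2}),
\]
where $\mbb A=(1+\tfrac{\tau_{n+1/2}}{2})\mbb I+\tfrac{\tau_{n+1/2}}{2}\mbb L_h$ and $\mbb B=(1-\tfrac{\tau_{n+1/2}}{2})\mbb I-\tfrac{\tau_{n+1/2}}{2}\mbb L_h$. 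Here $\mbb L_h=-(D_xd_x+D_yd_y)$ is the BCFD Neumann Laplacian. The Neumann condition means the boundary fluxes vanish, so the boundary stencils simply omit the missing neighbours.

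The key structural fact is that $\mbb L_h$ has positive diagonal entries, nonpositive off-diagonal entries, and zero row sums. Consequently $\mbb A$ has positive diagonal, nonpositive off-diagonal entries, and row sums equal to $1+\tau_{n+1/2}/2>0$. By Lemma \ref{lem:M-matrix}, $\mbb A$ is a non-singular M-matrix, and by Lemma \ref{lem:M-matrix:inverse}, $\mbb A^{-1}\ge0$.

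Turn to the right-hand side. The source term $\tau_{n+1/2}(u_h^{n+1/2}+v_h^{n+1/2})$ is nonnegative by the first part. The off-diagonal entries of $\mbb B$ are $-\tfrac{\tau_{n+1/2}}{2}\times(\text{off-diagonal of }\mbb L_h)\ge0$. It therefore suffices to show that the diagonal of $\mbb B$ is nonnegative; then $\mbb B c_h^n\ge0$ and $c_h^{n+1}=\mbb A^{-1}(\cdots)\ge0$.

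The main obstacle is this diagonal bound, since it is where the time-step restriction enters. The diagonal entry of $\mbb L_h$ at $(i,j)$ is
\[
\frac{1}{h_i}\Big(\frac{1}{h_{i+1/2}}+\frac{1}{h_{i-1/2}}\Big)+\frac{1}{k_j}\Big(\frac{1}{k_{j+1/2}}+\frac{1}{k_{j-1/2}}\Big),
\]
with the missing terms dropped at the boundary. Using $h_{i\pm1/2}\ge h_{min}$, this is bounded by $\tfrac{2}{h_{min}^2}+\tfrac{2}{k_{min}^2}=\tfrac{2(h_{min}^2+k_{min}^2)}{h_{min}^2k_{min}^2}$. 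So the diagonal of $\mbb B$ satisfies
\[
\mbb B_{ii}\ \ge\ 1-\frac{\tau}{2}-\tau\,\frac{h_{min}^2+k_{min}^2}{h_{min}^2k_{min}^2}.
\]
The two hypotheses on $\tau$ should be combined to make this nonnegative: $\tau\le1$ handles the reaction part, and the mesh condition $\tau\le\frac{2h_{min}^2k_{min}^2}{h_{min}^2+k_{min}^2}$ handles the diffusion part.

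I expect the constants to need care here. The stated mesh bound gives $\tau\frac{h_{min}^2+k_{min}^2}{h_{min}^2k_{min}^2}\le 2$, which is too weak on its own. A sharper diagonal estimate is therefore needed, for instance bounding each $x$-term by $\tfrac{1}{h_{min}^2}$ after a careful treatment of $h_{i\pm1/2}$. Alternatively, the condition may need to be read with a different normalisation. I would first try to reconcile the constant exactly along these lines before completing the induction, noting that the base case $c_h^0\ge0$ holds by hypothesis.
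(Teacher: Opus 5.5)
Your route is the paper's. You write Step 3 as $\mathbb{A}c_h^{n+1}=\mathbb{B}c_h^n+\tau_{n+1/2}(u_h^{n+1/2}+v_h^{n+1/2})$; the paper's $\mathbb{P}$ and $\mathbb{Q}$ are your $\mathbb{A}$ and your $\mathbb{B}$ divided by $\tau_{n+1/2}$, and the paper's $\mathbb{B}$ is your $-\mathbb{L}_h$. You then get $\mathbb{A}^{-1}\ge 0$ from Lemmas \ref{lem:M-matrix}--\ref{lem:M-matrix:inverse} and ask for your $\mathbb{B}\ge 0$ entrywise. Your root-finding argument for $F(\xi)=0$ is a harmless addition. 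As in the paper, ``unconditionally'' in the first part should be read as: Step 2 maps whatever Step 1 produces into $\mathbb{X}_h[u]\times\mathbb{X}_h[v]$. Section \ref{sec:erro-estimates} proves solvability of Step 1 only under step-size restrictions.

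The step you left open is a genuine gap, but it is a defect of the statement, and no sharper diagonal estimate can close it. On a uniform grid with $h=k$, your bound $\frac{2}{h_{min}^2}+\frac{2}{k_{min}^2}$ is attained at every interior cell. The stated hypothesis reads $\tau\le\min\{1,h^2\}$, so $\tau_{n+1/2}=h^2$ is admissible for $h\le 1$, and then $\mathbb{B}_{ii}=1-\frac{h^2}{2}-2<0$. The paper's proof makes the same unjustified step: it asserts without verification that $\tau\le\min\{1,\frac{2h_{min}^2k_{min}^2}{h_{min}^2+k_{min}^2}\}$ implies $\frac{1}{\tau_{n+1/2}}-\frac12-\frac12(\cdots)\ge 0$.

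The conclusion itself also fails, not just the sufficient condition. Since $\mathbb{A}+\mathbb{B}=2\mathbb{I}$, we have $\mathbb{A}^{-1}\mathbb{B}=2\mathbb{A}^{-1}-\mathbb{I}$. For $h=k$ and $\tau=h^2$, the Fourier symbol of $\mathbb{A}$ is $3+\frac{\tau}{2}-\cos\theta-\cos\phi$. At cells far from the boundary of a fine grid this gives $(\mathbb{A}^{-1})_{ii}\approx\frac{1}{4\pi^2}\int_{-\pi}^{\pi}\int_{-\pi}^{\pi}\frac{d\theta\,d\phi}{3+\tau/2-\cos\theta-\cos\phi}\le 0.39<\frac12$, hence $(\mathbb{A}^{-1}\mathbb{B})_{ii}<-0.2$.

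Now take $c_h^0$ equal to $1$ at one such cell and $0$ elsewhere, and $u^0,v^0$ small positive constants. Then $\|\mathbb{A}^{-1}\|_\infty\le 1$ and $\tau\|u_h^{1/2}+v_h^{1/2}\|_\infty\le M_h[u_h^0]+M_h[v_h^0]$, which is small. So $c_h^1<0$ at that cell.

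The hypothesis must be strengthened by a factor $4$, for example to $\tau\le\min\{1,\frac{h_{min}^2k_{min}^2}{2(h_{min}^2+k_{min}^2)}\}$. Then $\frac{\tau}{2}\le\frac12$ and $\tau\bigl(\frac{1}{h_{min}^2}+\frac{1}{k_{min}^2}\bigr)\le\frac12$, so your inequality gives $\mathbb{B}_{ii}\ge 0$, and the induction closes exactly as you planned.
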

\begin{proof} First, the unconditional positivity-preserving and mass conservation properties of the density solutions $\{u_h^{n+1/2}, v_h^{n+1/2}\}$ are directly implied from {\bf Step 2}. Next, we demonstrate the non-negativity of the concentration $c_h^{n+1}$. 

Let $\mbb{C}^n$, $\mbb{U}^{n+1/2}$ and $\mbb{V}^{n+1/2}$ denote the column vectors corresponding to the solutions $c_h^{n+1}$, $u_h^{n+1/2}$ and $v_h^{n+1/2}$, respectively. Moreover, let $\mbb{I}_x$ and $\mbb{I}_y$ be the identity matrices of orders $N_x$ and $N_y$, respectively. Let $\mbb{B}$ be the discrete Laplace operator matrix of order $N_s:=N_xN_y$ such that 
\begin{equation*}
    \mbb{B} = \mbb{B}_y \otimes \mbb{I}_x + \mbb{I}_y  \otimes \mbb{B}_x,
\end{equation*}
where the matrix $\mbb{B}_x$ of order $N_x$ has nonzero entries defined by
\[
\mbb{B}_x(i,j) = 
\begin{cases}
-\f{1}{h_1 h_{3/2}}, & j=i= 1, \\
~~\f{1}{h_i h_{i-1/2}}, &j=i-1,~2 \le i \le N_x,\\
-\f{1}{h_i h_{i-1/2}} - \f{1}{h_i h_{i+1/2}} , & j=i,~2 \le i \le N_x-1, \\
~~\f{1}{h_i h_{i+1/2}}, &j=i+1,~1 \le i \le N_x-1,\\
-\f{1}{h_{N_x} h_{N_x-1/2}}, & j=i= N_x,
\end{cases}
\]
and the matrix $\mbb{B}_y$ is defined analogously, with $h$ and $N_x$ replaced by $k$ and $N_y$. 
Then, we can rewrite \eqref{model:KS:shemeI3} in a compact matrix form:
\begin{equation*}
    \mbb{P} \mbb{C}^{n+1} = \mbb{Q}\mbb{C}^{n} + \mbb{U}^{n+1/2} + \mbb{V}^{n+1/2},
\end{equation*}
where $\mbb{P}:=(\f{1}{\tau_{n+1/2}} + \f{1}{2}) \mbb{I}_y \otimes \mbb{I}_x - \f{\mbb{B}}{2}$ and $\mbb{Q}= (\f{1}{\tau_{n+1/2}} - \f{1}{2}) \mbb{I}_y \otimes \mbb{I}_x + \f{\mbb{B}}{2}$.  
From Lemma \ref{lem:M-matrix}, we can readily verify that $\mbb{P}$ is a non-singular M-matrix; consequently, Lemma \ref{lem:M-matrix:inverse} implies $\mbb{P}^{-1} \geq 0$. Note that $\mbb{C}^n,\mbb{U}^{n+1/2},\mbb{V}^{n+1/2} \ge 0 $. Therefore, to preserve positivity for $\mbb{C}^{n+1}$, a sufficient condition is that all entries of $\mbb{Q}$ be non-negative. This leads to the following time-step condition:
\[
\f{1}{\tau_{n+1/2}}-\f{1}{2}-\f{1}{2}\left(\f{1}{h_{i}h_{i-1/2}} + \f{1}{h_{i}h_{i+1/2}} + \f{1}{k_{j}k_{j-1/2}} + \f{1}{k_{j}k_{j+1/2}}\right)\geq 0, 
\]
which can be satisfied by taking  $\tau$ sufficiently small such that
 \[ 
   \tau \le \min \bigl\{1, \f{2 h_{min}^2 k_{min}^2}{h_{min}^2+ k_{min}^2}\bigr\}.
 \] 
 Thus, the desired result is proved.
\end{proof}

\section{Unique solvability and error estimates}\label{sec:erro-estimates}
In this section, we show that the PP-MC-PBCFD scheme \eqref{model:KS:schemeI0}--\eqref{model:KS:shemeI3} is second-order accurate in both time and space under uniform spatial grids and the following regularity assumptions:
\begin{equation}\label{eq:regular}
	c(\bm x,t),u(\bm x,t),v(\bm x,t) \in W^{3,\infty}(0,T;W^{2,\infty}) \cap L^{ \infty}(0,T;W^{4,\infty}) \cap W^{2,\infty}(0,T;W^{3,\infty}).
\end{equation}
In particular, we assume that for some positive constant $K_*$, there holds
\begin{equation}\label{eq:regular:1}
\|\nabla c\|_{L^{\infty}(0,T;L^{\infty})} \leq K_*.
\end{equation}

Let $c^{n+1} = c(\cdot, t_{n+1})$, and $u^{n+1/2} = u(\cdot, t_{n+1/2})$ and $v^{n+1/2} = v(\cdot, t_{n+1/2})$ be the exact solutions of system \eqref{model:KS}--\eqref{model:KS:ic} at time $t=t_{n+1}$ and $t=t_{n+1/2}$, respectively. For the purpose of numerical analysis, we introduce two "biased" auxiliary solutions 
\begin{equation*}
{\theta}_{\epsilon}^{n+1/2} := \bigl(1 + \epsilon_{\theta}^{n+1/2}\bigr) \ell_h\theta^{n+1/2},
~~
	\epsilon_{\theta}^{n+1/2} := \f{M_h[\theta^0] - M_h[\theta^{n+1/2}] }{M_h[ \theta^{n+1/2}]}, \quad \text{for}~~ \theta=u, v.
\end{equation*}
Note that under assumption \eqref{eq:regular}, for sufficiently small $h$, we have ${u}_{\epsilon}^{n+1/2}, ~{v}_{\epsilon}^{n+1/2} \geq 0$ and 
\begin{equation}\label{eq:epsilon}
|\epsilon_{u}^{n+1/2}| + |\epsilon_{v}^{n+1/2}| \leq Kh^2,~~ |\epsilon_{u}^{n+1/2}-\epsilon_{u}^{n-1/2}|+|\epsilon_{v}^{n+1/2}-\epsilon_{v}^{n-1/2}| \leq K\tau_n h^2,
\end{equation}
for $1\leq n \leq N-1$.
In fact, the first part in \eqref{eq:epsilon} is a direct consequence of the second-order midpoint rule, triangle inequality and the mass conservation law \eqref{model:KS:MC}, i.e.,
\begin{align*}
    |\epsilon_{u}^{n+1/2}| = \Big|\f{M_h[u^0] - M_h[u^{n+1/2}] }{M_h[u^{n+1/2}]}\Big| 
    &\leq K \big( | M_h[u^0]-M[u^0]| +  | M[u^{n+1/2}]  - M_h[u^{n+1/2}]| \big)\\
    &\leq K \|u\|_{L^{\infty}(0,T;W^{2,1})} h^2.
\end{align*}
For the second part in \eqref{eq:epsilon}, again by \eqref{model:KS:MC} and the standard midpoint rule error estimate, we get
\begin{equation*}
	\begin{aligned}
		|\epsilon_{u}^{n+1/2}-\epsilon_{u}^{n-1/2}| &= \Big|M_h[u^0] \f{M_h[u^{n-1/2}]-M_h[ u^{n+1/2}]}{M_h[u^{n-1/2}] M_h[u^{n+1/2}]}\Big|   \leq K\big|M_h[u^{n-1/2}]-M_h[ u^{n+1/2}]\big|\\
        &\leq K\big| \big(M[u^{n+1/2}] - M_h[u^{n+1/2}]\big)-\big(M[u^{n-1/2}] - M_h[u^{n-1/2}]\big)\big|\\
        &\leq K \|u\|_{W^{1,\infty}(0,T;W^{2,1})} \tau_n h^2. 
	\end{aligned}
\end{equation*}

Combining Taylor expansion, the Cauchy--Schwarz inequality, and Lemmas \ref{trun:Dp}--\ref{lem:interp}, 
we are led to estimates for the following local truncation errors:  
	\begin{align}
		R_{u}^{1/2} &:= \frac{{u}_{\epsilon}^{1/2}-u^0}{\tau_0} - D_xd_x {u}_{\epsilon}^{1/2} - D_yd_y {u}_{\epsilon}^{1/2} 
         - D_x \bigl([\ell_h {u}_{\epsilon}^{1/2}][d_xc^0]\bigr) - D_y \bigl([\ell_h {u}_{\epsilon}^{1/2}][d_y c^0]\bigr), \label{eq:Ru_0} \\
		R_{v}^{1/2} &:= \frac{{v}_{\epsilon}^{1/2}-v^0}{\tau_0} - D_xd_x {v}_{\epsilon}^{1/2} - D_yd_y {v}_{\epsilon}^{1/2}
         - D_x \bigl([\ell_h {v}_{\epsilon}^{1/2}][d_x c^0]\bigr) - D_y \bigl([\ell_h {v}_{\epsilon}^{1/2}][d_y c^0]\bigr), \label{eq:Rv_0} 
    \end{align}
    and for $n \geq 1$,
\begin{align}         
		R_{u}^{n} &:= \frac{{u}_{\epsilon}^{n+1/2}-{u}_{\epsilon}^{n-1/2}}{\tau_{n}} - D_x d_x \overline{u}_{\epsilon}^{n} - D_y d_y \overline{u}_{\epsilon}^{n} 
        - D_x \bigl([\ell_h \overline{u}_{\epsilon}^{n}][d_x c^{*,n}]\bigr) - D_y \bigl([\ell_h \overline{u}_{\epsilon}^{n}][d_y c^{*,n}]\bigr),  \label{eq:Ru_n} \\
		R_{v}^{n} &:= \frac{{v}_{\epsilon}^{n+1/2}-{v}_{\epsilon}^{n-1/2}}{\tau_{n}} - D_x d_x \overline{v}_{\epsilon}^{n} - D_y d_y \overline{v}_{\epsilon}^{n} 
         - D_x \bigl([\ell_h \overline{v}_{\epsilon}^{n}][d_x c^{*,n}]\bigr) - D_y \bigl([\ell_h \overline{v}_{\epsilon}^{n}][d_y c^{*,n}]\bigr),  \label{eq:Rv_n}
    \end{align}
        and for $n \geq 0$,
\begin{align}              
     R_{c}^{n+1/2} := d_\tau c^{n+1/2}- D_x d_x \overline{c}^{n+1/2} - D_y d_y \overline{c}^{n+1/2} + \overline{c}^{n+1/2} 
        - {u}_{\epsilon}^{n+1/2} - {v}_{\epsilon}^{n+1/2}. \label{eq:Rc_n} 
\end{align}
    
\begin{lemma}\label{lem:truncErr} For the local truncation errors $\{R_{u}^{n+1/2},R_{v}^{n+1/2}, R_c^{n+1}\}$ defined in \eqref{eq:Ru_0}--\eqref{eq:Rv_n}, under the regularity assumption \eqref{eq:regular}, we have 
\begin{equation*}\label{eq:tuncE}
	\begin{aligned}
		&\|R_{u}^{1/2}\|_{\rm{M}} + \|R_{v}^{1/2}\|_{\rm{M}} \leq K\big(\tau_0 + h^2\big),\\
        &\|R_{u}^{n}\|_{\rm{M}} + \|R_{v}^{n}\|_{\rm{M}} \leq K\big(\tau_n^2 + h^2\big), \quad n \geq 1,\\
		&\|R_c^{n+1/2}\|_{\rm{M}}  \leq K\big(\tau_n^2 + h^2\big),	 \quad n \geq 0,	
	\end{aligned}
\end{equation*}
where the positive constant $K$ is independent of mesh sizes $\tau$ and $h$. 
\end{lemma}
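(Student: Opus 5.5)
The plan is to split each truncation error into a temporal part and a spatial part. I insert the exact PDE at the relevant time level and then bound each remainder separately, using Taylor expansion in time, Lemma \ref{trun:Dp} in space, Lemma \ref{lem:interp} for the interpolation operator $\ell_h$, and \eqref{eq:epsilon} for the mass-correction factor.

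The first reduction handles the biased auxiliary solutions. I write $\theta_\epsilon^{n+1/2}=\ell_h\theta^{n+1/2}+\epsilon_\theta^{n+1/2}\ell_h\theta^{n+1/2}$. At the cell-centre nodes $\ell_h\theta=\theta$ pointwise, so the first piece is just the nodal values.

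For the perturbation piece, \eqref{eq:epsilon} gives $|\epsilon_\theta^{n+1/2}|\le Kh^2$. This bounds its contribution to the spatial operators by $Kh^2$, because $D_xd_x\theta$ and $D_x([\ell_h\theta][d_xc])$ are bounded in $\|\cdot\|_{\rm M}$ under \eqref{eq:regular}. The time difference of the perturbation, $(\epsilon^{n+1/2}\theta^{n+1/2}-\epsilon^{n-1/2}\theta^{n-1/2})/\tau_n$, splits into $\epsilon^{n+1/2}d\theta$ plus $(\epsilon^{n+1/2}-\epsilon^{n-1/2})\theta^{n-1/2}/\tau_n$. The second part of \eqref{eq:epsilon} bounds this by $Kh^2$. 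For $n=0$ I only need $u^0$ versus $u_\epsilon^{1/2}$, and the $\epsilon$-term gives $Kh^2/\tau_0$. That is the worrying spot, and I return to it below.

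Next, for $n\ge1$, I expand everything about $t_*=(t_{n+1/2}+t_{n-1/2})/2$, which is the midpoint of $[t_{n-1/2},t_{n+1/2}]$ with length $\tau_n$. The difference quotient equals $\partial_tu(t_*)+O(\tau_n^2)$, and the average $\overline u^n$ equals $u(t_*)+O(\tau_n^2)$. The extrapolation $c^{*,n}$ is linear interpolation/extrapolation from $t_{n-1},t_n$ to $t_*$. Since $t_*-t_n=(\tau_{n+1/2}-\tau_{n-1/2})/4$ and the step ratio is bounded by $\sigma^*$, the error is $O(\tau_n^2)\|c_{tt}\|$, uniformly in the variable steps.

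In space, $D_xd_xu$ approximates $\partial_{xx}u$ to $O(h^2)$ on uniform grids; I will use the standard BCFD consistency, essentially Lemma \ref{trun:Dp} applied twice. For the flux term, I compare $[\ell_hu][d_xc]$ at $x_{i+1/2}$ with $uc_x$ there. Lemma \ref{lem:interp} and the midpoint accuracy of $d_xc$ each give $O(h^2)$ at the edges. Applying $D_x$ and using Lemma \ref{trun:Dp} on the smooth product $uc_x$ then gives $O(h^2)$ in $\|\cdot\|_{\rm M}$.

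For the $D_x$ of the edge errors to stay $O(h^2)$, I need the edge errors to be smooth in the second order, i.e. of the form $h^2g(x)+O(h^3)$. This holds on uniform grids, which is why \eqref{eq:regular} is posed there. This step I expect to be the main technical obstacle. In particular, the $\ell_h$ interpolation near boundary edges needs care, and there I would use the vanishing Neumann flux.

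The $c$-equation is handled the same way: it is a CN discretization centred at $t_{n+1/2}$, the midpoint of $[t_n,t_{n+1}]$. Replacing $u_\epsilon^{n+1/2}$ by $u^{n+1/2}$ costs $Kh^2$. The result is $O(\tau_{n+1/2}^2+h^2)$, and $\tau_{n+1/2}\le K\tau_n$ by the ratio bounds.

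Finally, the initial step $n=0$ is backward Euler on $[0,\tau_0]$, which gives $O(\tau_0)$ in time. The $\epsilon$-term concern noted above is removed because $\epsilon_u^{1/2}-\epsilon_u^0=\epsilon_u^{1/2}$ (as $\epsilon^0=0$). Rewriting $\epsilon_u^{1/2}\ell_hu^{1/2}/\tau_0$ via the analogue of the second estimate in \eqref{eq:epsilon} on $[0,t_{1/2}]$ gives $Kh^2$, completing the bound $K(\tau_0+h^2)$.
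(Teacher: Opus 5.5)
Your outline follows the paper's own proof, which only invokes Taylor expansion, Cauchy--Schwarz and Lemmas \ref{trun:Dp}--\ref{lem:interp}. Your treatment of the $n=0$ mass correction is correct and genuinely needed. \eqref{eq:epsilon} is only stated for $n\ge1$, and using $\epsilon_u^{0}=0$ together with the time variation of the midpoint-rule error on $[0,t_{1/2}]$ gives $|\epsilon_u^{1/2}|\le K\tau_0h^2$, which removes the $h^2/\tau_0$ term.

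The step that does not close is the one you flag, and ``vanishing Neumann flux'' is not enough to close it. On a uniform grid
\[
[\ell_h u]_{i+1/2,j}[d_xc]_{i+1/2,j}-(u\,c_x)(x_{i+1/2},y_j)=h^2g(x_{i+1/2},y_j)+O(h^3),\qquad g=\tfrac18u_{xx}c_x+\tfrac1{24}u\,c_{xxx}.
\]
The boundary edge error is indeed zero, but then in the first cell $D_x(\text{edge error})_{1,j}=h\,g(x_{3/2},y_j)+O(h^2)$. This is $O(h)$ unless $g$ vanishes on $x=a^x$. Likewise, the error of $D_xd_xu$ in the first cell is $\tfrac{h}{24}u_{xxx}(x_{3/2},y_j)+O(h^2)$. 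A strip of $O(1/h)$ such cells contributes $O(h^{3/2})$ to $\|\cdot\|_{\rm M}$, so as written you only get $K(\tau_n^2+h^{3/2})$.

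The missing ingredient is the higher-order compatibility $\partial_{\bm n}^3u=\partial_{\bm n}^3c=0$ on $\partial\Omega$. It follows from \eqref{eq:regular}, which holds up to the boundary, together with the equations:
\begin{itemize}
\item Differentiating $\partial_{\bm n}c=0$ in time and using the $c$-equation gives $\partial_{\bm n}\Delta c=0$. Since $c_x\equiv0$ along the flat side $x=a^x$, its tangential derivatives vanish there, so $c_{xxx}=0$ on that side.
\item Similarly, $\partial_x u_t=0$ and the $u$-equation give $\partial_x\Delta u=\partial_x\nabla\cdot(u\nabla c)$ on $x=a^x$. The right-hand side is zero, because every term of it contains one of $u_x$, $c_x$, $u_{xy}$, $c_{xy}$, $\partial_x\Delta c$. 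Hence $u_{xxx}=0$ there.
\end{itemize}
Then $g=0$ on $x=a^x$, so $g(x_{3/2},y_j)=O(h)$ and the boundary cells are $O(h^2)$ as well. The same argument applies to $v$, to $D_xd_x\overline{c}^{\,n+1/2}$ in $R_c^{n+1/2}$, and to the $y$-direction. With this added, your argument is complete.
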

Next, set ${e}_{w}^{n+1/2} := {w}_{\epsilon}^{n+1/2} - w_h^{n+1/2}$, $\widetilde{e}_{w}^{n+1/2}:={w}_{\epsilon}^{n+1/2} - \widetilde{w}_h^{n+1/2}$ for $w=u, v$, and ${e}_{c}^{n+1} := \ell_hc^{n+1} - c_h^{n+1}$. It is straightforward to verify that \begin{equation}\label{err:trunc:mass}
  (e_{w}^{n+1/2},1)_{\mrM} =(\widetilde{e}_{w}^{n+1/2},1)_{\mrM} = 0~~ \text{for} ~w =u,v.
 \end{equation}
In addition, we define ${e}_{w}^{0} := {w}^{0} - w_h^{0}$ for $w=u, v$. Note that $w^0$ and $w_h^0$ are equal at the grid points $\Pi_x^*\times \Pi_y^*$, i.e., $e_{w,i,j}^0 \equiv 0$ for $i=1, \ldots, N_x, ~ j=1, \ldots, N_y$.


\begin{lemma} \label{lem:proj:e} 
	For the errors $\{\widetilde{e}_{u}^{n+1/2},\widetilde{e}_{v}^{n+1/2}\}$ and $\{{e}_{u}^{n+1/2}, {e}_{v}^{n+1/2}\}$, it holds that
	\begin{equation*}
		\|e_{u}^{n+1/2}\|_{\rm{M}}^2 + \|e_{u}^{n+1/2} - \widetilde{e}_{u}^{n+1/2}\|_{\rm{M}}^2 \leq \|\widetilde{e}_{u}^{n+1/2}\|_{\rm{M}}^2,
		~\|e_{v}^{n+1/2}\|_{\rm{M}}^2 + \|e_{v}^{n+1/2} - \widetilde{e}_{v}^{n+1/2}\|_{\rm{M}}^2 \leq \|\widetilde{e}_{v}^{n+1/2}\|_{\rm{M}}^2,
	\end{equation*}
	for $0 \leq n \leq N-1$.
\end{lemma}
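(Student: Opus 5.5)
The inequality is the Pythagorean-type property of the $L^2$ projection onto the closed convex set $\mbX_h[u]$, taking the biased auxiliary solution $u_\epsilon^{n+1/2}$ as the comparison point. We only treat $u$; the argument for $v$ is identical with $(\eta_h^{n+1/2},\theta^{n+1/2})$ in place of $(\lambda_h^{n+1/2},\xi^{n+1/2})$. First we check that $u_\epsilon^{n+1/2}\in\mbX_h[u]$. Nonnegativity was noted after its definition for sufficiently small $h$. For the mass, $M_h[u_\epsilon^{n+1/2}]=(1+\epsilon_u^{n+1/2})M_h[\ell_h u^{n+1/2}]$. Since $\ell_h$ is the interpolant through the cell-centre values, $M_h[\ell_h u^{n+1/2}]=M_h[u^{n+1/2}]$. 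Hence, by the choice of $\epsilon_u^{n+1/2}$, this equals $M_h[u^0]=M_h[u_h^0]$, because $u_h^0=\ell_h u^0$ coincides with $u^0$ at the grid points.

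Next, write $e_u-\widetilde e_u=\widetilde u_h^{n+1/2}-u_h^{n+1/2}=\xi^{n+1/2}-\lambda_h^{n+1/2}$ by the KKT relation \eqref{model:KS:shemeI2a}. Expanding the norm gives the algebraic identity
\[
\|\widetilde e_u\|_{\mrM}^2=\|e_u\|_{\mrM}^2+\|e_u-\widetilde e_u\|_{\mrM}^2-2(e_u,\,e_u-\widetilde e_u)_{\mrM}.
\]
It therefore suffices to show $(e_u,e_u-\widetilde e_u)_{\mrM}\le 0$, that is,
\[
(u_\epsilon^{n+1/2}-u_h^{n+1/2},\ \xi^{n+1/2}-\lambda_h^{n+1/2})_{\mrM}\le 0.
\]

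The $\xi$ part vanishes, since $\xi^{n+1/2}$ is a constant and $(e_u,1)_{\mrM}=0$ by \eqref{err:trunc:mass}. This uses the mass equality from the first step. The remaining term is
\[
-(u_\epsilon^{n+1/2},\lambda_h^{n+1/2})_{\mrM}+(u_h^{n+1/2},\lambda_h^{n+1/2})_{\mrM}.
\]
The second inner product is zero by complementarity \eqref{model:KS:shemeI2b}. The first is nonpositive because $u_\epsilon^{n+1/2}\ge0$ and $\lambda_h^{n+1/2}\ge0$. This gives the claim for every $0\le n\le N-1$; the case $n=0$ is covered since Step 2 is applied identically after \eqref{model:KS:schemeI0}.

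The only delicate point is the membership $u_\epsilon^{n+1/2}\in\mbX_h[u]$. Its nonnegativity relies on $h$ being small, with $u$ positive or at least nonnegative, so that $1+\epsilon_u^{n+1/2}>0$ by \eqref{eq:epsilon}. Its mass constraint relies on the grid-point consistency of $\ell_h$. If $\ell_h$ evaluated on $\Pi_x^*\times\Pi_y^*$ were not exactly the nodal values, one would instead use the definition of $\epsilon_u$ with $M_h[\ell_h u^{n+1/2}]$ in the denominator; we take the interpolation property as stated.
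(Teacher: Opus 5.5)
Your proof is correct and is essentially the paper's argument: the KKT relation gives $e_u^{n+1/2}-\widetilde e_u^{n+1/2}=\xi^{n+1/2}-\lambda_h^{n+1/2}$, the $\xi$-part vanishes by the zero-mean property \eqref{err:trunc:mass}, and complementarity together with $u_\epsilon^{n+1/2}\ge 0$ and $\lambda_h^{n+1/2}\ge 0$ makes the cross term nonpositive. Your sign for $e_u^{n+1/2}-\widetilde e_u^{n+1/2}$ is the right one (the paper's displayed relation has it flipped, though its final expression $-(\lambda_h^{n+1/2},e_u^{n+1/2})_{\mrM}$ agrees with yours), and your explicit check that $u_\epsilon^{n+1/2}\in\mbX_h[u]$ fills in the step the paper calls straightforward.
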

\begin{proof} The proof for $e_{v}^{n+1/2}$ is the same as that of $e_{u}^{n+1/2}$ and is omitted here for brevity. We obtain from the first equation of \eqref{model:KS:shemeI2a} that
	\begin{equation*}
		e_{u}^{n+1/2} - \widetilde{e}_{u}^{n+1/2} = \lambda_h^{n+1/2} - \xi^{n+1/2}.
	\end{equation*}
	Then, taking the discrete inner product with $e_{u}^{n+1/2}$, we have
	\begin{equation*}
			\f{1}{2}(\|e_{u}^{n+1/2}\|_{\rm{M}}^2 + \|e_{u}^{n+1/2} - \widetilde{e}_{u}^{n+1/2}\|_{\rm{M}}^2 - \|\widetilde{e}_{u}^{n+1/2}\|_{\rm{M}}^2) = -(\lambda_h^{n+1/2},e_{u}^{n+1/2})_{\mrM},
	\end{equation*}
	where we have used the fact that $(e_{u}^{n+1/2},\xi^{n+1/2})_{\mrM}= \xi^{n+1/2} (e_{u}^{n+1/2},1)_{\mrM}=0$ due to the mass conservation \eqref{err:trunc:mass}. 
	Moreover, it follows from the KKT condition \eqref{model:KS:shemeI2b} and the fact ${u}_{\epsilon}^{n+1/2} \geq 0$ that
	$$-(\lambda_h^{n+1/2},e_{u}^{n+1/2})_{\mrM} = -(\lambda_h^{n+1/2},{u}_{\epsilon}^{n+1/2})_{\mrM} \leq 0.$$
	Thus, the first conclusion is proved.
\end{proof}

\begin{lemma}\label{lem:erho} Assume that the exact solutions satisfy the regularity condition \eqref{eq:regular}. Then, there exist two positive constants $K_u$ and $K_v$, independent of $\tau$ and $h$, such that
	\begin{equation}\label{err:uv:0}
		\| \widetilde{e}_{u}^{1/2}\|_{\rm M}^2 \leq  K_{u} \big( \tau_{0}^4 + h^4\big),\quad
		\| \widetilde{e}_{v}^{1/2}\|_{\rm M}^2 \leq  K_{v} \big( \tau_{0}^4 + h^4\big),\quad \text{for}~ \tau_0 \leq \tau_{*},
	\end{equation}
 and
 \begin{equation}\label{err:u:sum} 
	\begin{aligned}
        \| \widetilde{e}_{u}^{m+1/2}\|_{\rm M}^2
		 \le K_u \Bigl( \sum_{n=1}^{m} \tau_{n}\|  \bm{d} {e}_c^{n} \|_{\rm TM}^2 
         +  \bigl(\max_{0 \le n \le m} \|\bm{d}  c_h^{n}  \|_\infty^2 +1\bigr) \sum_{n=0}^{m} \tau_{n+1/2}   \|\widetilde{e}_{u}^{n+1/2} \|_{\rm M}^2  
         + \tau^4 + h^4\Bigr),
		\end{aligned}
\end{equation}
\begin{equation}\label{err:v:sum} 
	\begin{aligned}
         \| \widetilde{e}_{v}^{m+1/2}\|_{\rm M}^2
         \leq K_v \Bigl(  \sum_{n=1}^{m} \tau_{n} \| \bm{d} {e}_c^{n} \|_{\rm TM}^2 +  \bigl(\max_{0 \le n \le m} \|\bm{d}  c_h^{n}  \|_\infty^2 +1\bigr) \sum_{n=0}^{m}  \tau_{n+1/2}  \|\widetilde{e}_{v}^{n+1/2} \|_{\rm M}^2 
         + \tau^4 + h^4\Bigr),
	\end{aligned}
\end{equation}
for $1\leq m \leq N-1$.
\end{lemma}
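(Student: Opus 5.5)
The plan is a discrete energy argument on the error equations obtained by subtracting the prediction step from the truncation-error definitions. For $n=0$, subtract \eqref{model:KS:sheme11} from \eqref{eq:Ru_0}. Since $e_u^0=0$ at the grid points, this gives
\[
\frac{\widetilde{e}_u^{1/2}}{\tau_0}=D_xd_x\widetilde{e}_u^{1/2}+D_yd_y\widetilde{e}_u^{1/2}-D_x\bigl([\ell_h\widetilde{e}_u^{1/2}][d_xc_h^0]\bigr)-D_y\bigl([\ell_h\widetilde{e}_u^{1/2}][d_yc_h^0]\bigr)+\mathcal{N}^0+R_u^{1/2},
\]
where $\mathcal{N}^0$ collects the terms $D_x([\ell_h u_\epsilon^{1/2}][d_x(\ell_h c^0-c_h^0)])$. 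These vanish, because $c_h^0=\ell_h c^0$.

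Take the $(\cdot,\cdot)_{\rm M}$ inner product with $\widetilde{e}_u^{1/2}$ and apply Lemma \ref{lemma:Dd}. The diffusion term gives $\|\bm d\widetilde e_u^{1/2}\|_{\rm TM}^2$ on the left. The convection term becomes $(\ell_h\widetilde e_u^{1/2}\,d c_h^0,\bm d\widetilde e_u^{1/2})$, which is bounded by $\tfrac12\|\bm d\widetilde e\|^2+K\|dc_h^0\|_\infty^2\|\widetilde e\|_{\rm M}^2$. Here $\|\ell_h w\|\le K\|w\|_{\rm M}$ on edge grids, and $\|dc_h^0\|_\infty\le K$ by the regularity of $c^0$. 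The truncation term is bounded by $\tau_0\|R\|^2+\tau_0^{-1}\|\widetilde e\|^2/4$, or more simply by Cauchy--Schwarz. Multiplying by $\tau_0$ then gives $(1-K\tau_0)\|\widetilde e_u^{1/2}\|^2\le K\tau_0^2\|R_u^{1/2}\|^2\le K\tau_0^2(\tau_0+h^2)^2$. For $\tau_0\le\tau_*$ this is $\le K(\tau_0^4+h^4)$, which proves \eqref{err:uv:0}.

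For $n\ge1$, subtract \eqref{model:KS:shemeIa} from \eqref{eq:Ru_n}. The time difference is $(\widetilde e_u^{n+1/2}-e_u^{n-1/2})/\tau_n$, and the averaged error is $\overline{\widetilde e}{}^n_u=\tfrac12(\widetilde e_u^{n+1/2}+e_u^{n-1/2})$. Split the nonlinear difference as
\[
[\ell_h\overline u_\epsilon^n][d c^{*,n}]-[\ell_h\overline{\widetilde u}{}_h^n][dc_h^{*,n}]
=[\ell_h\overline u_\epsilon^n]\,d(c^{*,n}-c_h^{*,n})+[\ell_h\overline{\widetilde e}{}^n_u]\,dc_h^{*,n}.
\]
The exact $c^{*,n}$ differs from $\ell_h$ of the extrapolation by $O(h^2)$, and that part is absorbed into the truncation error. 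Then take the inner product with $2\tau_n\overline{\widetilde e}{}^n_u$. The time term gives $\|\widetilde e_u^{n+1/2}\|^2-\|e_u^{n-1/2}\|^2$, and we use $\|e_u^{n-1/2}\|\le\|\widetilde e_u^{n-1/2}\|$ from Lemma \ref{lem:proj:e} to telescope. The diffusion term gives $2\tau_n\|\bm d\overline{\widetilde e}{}^n_u\|_{\rm TM}^2$, which absorbs the convection terms through Young's inequality:
\begin{itemize}
\item The first convection term is bounded by $K\tau_n\|\bm d e_c^{*,n}\|^2$, using $\|\ell_h u_\epsilon\|_\infty\le K$. Since $e_c^{*,n}$ is a combination of $e_c^{n-1},e_c^n$ with coefficients bounded under the step-ratio assumption, this yields $\tau_n(\|\bm de_c^{n}\|^2+\|\bm de_c^{n-1}\|^2)$. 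These are reindexed into $\sum\tau_n\|\bm de_c^n\|^2$; $e_c^0=0$, and $\tau_{n}\le K\tau_{n-1}$.
\item The second convection term is bounded by $K\tau_n\|dc_h^{*,n}\|_\infty^2(\|\widetilde e_u^{n+1/2}\|^2+\|\widetilde e_u^{n-1/2}\|^2)$, with $\|dc_h^{*,n}\|_\infty\le K\max(\|dc_h^{n-1}\|_\infty,\|dc_h^n\|_\infty)$.
\end{itemize}
The truncation term is bounded by $\tau_n\|R_u^n\|^2+\tau_n\|\overline{\widetilde e}\|^2$.

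Summing $n=1,\dots,m$, adding the $n=0$ bound, and converting $\tau_n$ into $\tau_{n\pm1/2}$ using $\tau_n\le K\tau_{n\pm1/2}$ gives \eqref{err:u:sum}. The remainder is $\sum\tau_n(\tau_n^2+h^2)^2\le KT(\tau^4+h^4)$. The $v$ case \eqref{err:v:sum} is identical.

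I expect the main obstacle to be the bookkeeping of the consistency part of the nonlinear term. First, $R_u^n$ is defined with exact $c^{*,n}$, while the error $e_c$ uses $\ell_hc$, so the mismatch $d(c^{*,n}-\ell_hc^{*,n})$ must be shown to be $O(h^2)$ in $\|\cdot\|_{\rm TM}$ using Lemma \ref{lem:interp} and the uniform grid. Second, the projection error between $u_h$ and $\widetilde u_h$ at $t_{n-1/2}$ must be handled; this is done cleanly via Lemma \ref{lem:proj:e}, noting that the scheme uses $u_h^{n-1/2}$ in both the time difference and the average. The $\max\|\bm dc_h\|_\infty$ factor is deliberately kept explicit here, to be controlled later by induction.
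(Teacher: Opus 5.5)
Your proposal is correct and follows essentially the same route as the paper. You test the $n=0$ error equation with $\tau_0\widetilde e_u^{1/2}$ and the $n\ge1$ equation with $2\tau_n\overline{\widetilde e}{}_u^n$, split the convection difference into $[\ell_h\overline u_\epsilon^n][\bm d e_c^{*,n}]+[\ell_h\overline{\widetilde e}{}_u^n][\bm d c_h^{*,n}]$, use Lemma \ref{lem:proj:e} to telescope, and control the extrapolation through the step-ratio bounds, exactly as the paper does.

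The ``mismatch'' $\bm d(c^{*,n}-\ell_h c^{*,n})$ that you flag as the main obstacle is in fact identically zero. The grid function $\ell_h c$ coincides with $c$ on $\Pi_x^*\times\Pi_y^*$, and $d_x,d_y$ only see those cell-centred values, so no extra $O(h^2)$ interpolation argument is needed.
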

\begin{proof} 
First, we present estimates for $\|\widetilde{e}_{u}^{1/2}\|_{\rm M}$ and $\|\widetilde{e}_{v}^{1/2}\|_{\rm M}$.
Subtracting \eqref{model:KS:sheme11} from \eqref{eq:Ru_0} yields the following error equation
\begin{equation}\label{err:PC-BCFD:rho1} 
	\begin{aligned}
    \f{\widetilde{e}_{u}^{1/2} }{\tau_0} 
    &=  D_x ( d_x \widetilde{e}_{u}^{1/2} ) 
		+  D_y ( d_y \widetilde{e}_{u}^{1/2}  )    -  \Lambda^{1/2} +  \mrR_{u}^{1/2}, \quad \text{on} ~\Pi_x^*\times \Pi_y^*,
	\end{aligned}
\end{equation}
where  
\begin{equation*}\label{err:PC-BCFD:lbda} 
	\begin{aligned}
\Lambda^{1/2}
		&:=    D_x \big( [\ell_h {u}_{\epsilon}^{1/2}] [ d_x c^0  ]
		    - [\ell_h \widetilde{u}_h^{1/2}] [d_x c_h^0] \big)  
            +  D_y \big( [\ell_h {u}_{\epsilon}^{1/2}] [ d_y c^0 ]
              - [\ell_h \widetilde{u}_h^{1/2}] [d_y c_h^0] \big)\\
        &=    D_x \big(  [\ell_h \widetilde{e}_{u}^{1/2}] [d_x c_h^0] \big)  +  D_y \big(  [\ell_h \widetilde{e}_{u}^{1/2}] [d_y c_h^0] \big),
  \end{aligned}
\end{equation*}
as $c_h^0=c^0$ at all grid points $\Pi_x^*\times \Pi_y^*$.

Then, taking discrete inner product of \eqref{err:PC-BCFD:rho1} with $\tau_0\, \widetilde{e}_u^{1/2}$ and applying Lemma \ref{lemma:Dd} we obtain
 \begin{equation}\label{inn:PC-BCFD:rho1} 
	\begin{aligned}
	   &	\|\widetilde{e}_{u}^{1/2}\|_{\rm M}^2 + \tau_0  \|\bm{d}\widetilde{e}_{u}^{1/2}\|_{\rm TM}^2 \\
       &\quad = 
         \tau_0 \Big[\big( [\ell_h \widetilde{e}_{u}^{1/2}] [d_x c_h^0], d_x \widetilde{e}_{u}^{1/2} \big)_x
          +  \big( [\ell_h \widetilde{e}_{u}^{1/2}] [d_y c_h^0], d_y \widetilde{e}_{u}^{1/2} \big)_y \Big]
		  +  \tau_0 \big( \mrR_{u}^{1/2},\widetilde{e}_{u}^{1/2}\big)_{\rm M}   =:\sum_{i=1}^{2} I_i. 
	\end{aligned}
\end{equation}
Thus, the application of Cauchy--Schwarz inequality yields the following bounds 
\begin{equation}\label{est:pc-BCFD:rho1:rhs3}
	\begin{aligned} 
		|I_1 |
        &\leq   \f{\tau_0 \|\bm{d} c_h^0\|_\infty^2}{4}\, \|  \widetilde{e}_{u}^{1/2}|_{\rm M}^2 
                    + \tau_0  \|\bm{d} \widetilde{e}_u^{1/2}\|_{\rm TM}^2
          \leq   \f{\tau_0 K_*^2}{4}\, \|  \widetilde{e}_{u}^{1/2}|_{\rm M}^2 
                    + \tau_0  \|\bm{d} \widetilde{e}_u^{1/2}\|_{\rm TM}^2          ,
	\end{aligned}
\end{equation}
and
\begin{equation}\label{est:pc-BCFD:rho1:rhs4bf}
   | I_2 | \leq  \tau_0^2 \|R_{u}^{1/2}\|_ {\rm M}^2 + \f{1}{4}\|\widetilde{e}_{u}^{1/2}\|_ {\rm M}^2.
\end{equation}

Inserting \eqref{est:pc-BCFD:rho1:rhs3}--\eqref{est:pc-BCFD:rho1:rhs4bf} into \eqref{inn:PC-BCFD:rho1}, for a sufficiently small chosen stepsize 
$\tau_{*} :=1/K_*^2$, 
we obtain from Lemma \ref{lem:truncErr} that 
\begin{equation*}\label{err:step1}
\| \widetilde{e}_{u}^{1/2}\|_{\rm M}^2  \leq 2\tau_0^2\|R_{u}^{1/2}\|_ {\rm M}^2\leq K_{u}\big(\tau_0^4 + h^4 \big), \quad \text{for} ~\tau_0 \le \tau_{*}.
\end{equation*}
Similarly, we can obtain the estimate for $\|\widetilde{e}_{v}^{1/2}\|_{\rm M}$ that
\begin{equation*}
    \| \widetilde{e}_{v}^{1/2}\|_{\rm M}^2 
    \leq  K_{v}\big( \tau_{0}^4 + h^4\big), \quad \text{for} ~\tau_0 \le \tau_{*}.
\end{equation*}
Thus, the conclusion \eqref{err:uv:0} is proved.

Next, we proceed to bound $\|\widetilde{e}_u^{m+1/2}\|_{\rm M}$ and $\|\widetilde{e}_v^{m+1/2}\|_{\rm M}$ for $m \ge 1$. For brevity, we provide the details only for \eqref{err:u:sum}; the estimate for \eqref{err:v:sum} follows by a verbatim argument. Subtracting \eqref{model:KS:shemeIa} from \eqref{eq:Ru_n} yields the following error equation
\begin{equation}\label{err:CN-BCFD:rho} 
	\begin{aligned}
		 \f{\widetilde{e}_{u}^{n+1/2}-e_{u}^{n-1/2}}{\tau_{n}}
		& =  D_x ( d_x \overline{\widetilde{e}}_{u}^{n}  ) 
		+  D_y ( d_y \overline{\widetilde{e}}_{u}^{n} )
  -  \Lambda^{n} +  R_{u}^{n}, \quad \text{on} ~\Pi_x^*\times \Pi_y^*,
	\end{aligned}
\end{equation}
where 
 \begin{equation*}
 		\Lambda^{n} 
 		:=  \bigl[D_x ( [\ell_h \overline{u}_{\epsilon}^{n}] [ d_x c^{*,n}] - [\ell_h \overline{\widetilde{u}}_h]^{n} [d_x c_h]^{*,n}) \bigr] 
        + \bigl[D_y ( [\ell_h \overline{u}_{\epsilon}^{n}] [ d_y c^{*,n} ] - [\ell_h \overline{\widetilde{u}}_h^{n}] [d_y c_h^{*,n}]) \bigr].
 \end{equation*}
 
Then, taking the discrete inner product of \eqref{err:CN-BCFD:rho} with $2\tau_n  \overline{\widetilde{e}}_{u}^{n} =\tau_n (\widetilde{e}_u^{n+1/2} + e_u^{n-1/2})$ and using Lemma \ref{lemma:Dd}, we obtain
\begin{equation}\label{inn:CN-BCFD:rho} 
   \begin{aligned}
	   & \| \widetilde{e}_{u}^{n+1/2}\|_{\rm M}^2 -\| {e}_{u}^{n-1/2}\|_{\rm M}^2 + 2\tau_{n} \|\bm{d} \overline{\widetilde{e}}_{u}^{n}\|_{\rm TM}^2 \\
      & \quad = 2\tau_{n} \Big( \big([\ell_h \overline{u}_{\epsilon}^{n}]  [ d_x c^{*,n}]  - [\ell_h \overline{\widetilde{u}}_h^{n}] [d_x c_h^{*,n}],d_x \overline{\widetilde{e}}_{u}^{n} \big)_x   
        + \big( [\ell_h \overline{u}_{\epsilon}^{n}] [ d_y c^{*,n} ] - [\ell_h \overline{\widetilde{u}}^{n}] [d_y c_h^{*,n}] , d_y \overline{\widetilde{e}}_{u}^{n} \big)_y \Big) \\
      & \qquad +   2\tau_{n}\big( R_{u}^{n},\overline{\widetilde{e}}_{u}^{n}\big)_{\rm M} =:\sum_{i=1}^{2} J_i.
	\end{aligned} 
\end{equation}
For the first right term $J_1$, we have
\begin{equation*}\label{est:erho:rhs3}
	\begin{aligned} 
   J_1 &= 2\tau_{n} \Big(( [\ell_h \overline{u}_{\epsilon}^{n}]  [ d_x e_c^{*,n}]  + [\ell_h \overline{\widetilde{e}}_{u}^{n}] [d_x c_h^{*,n}],d_x \overline{\widetilde{e}}_{u}^{n} \big)_x +  ( [\ell_h \overline{u}_{\epsilon}^{n}]  [ d_y e_c^{*,n} ]  + [\ell_h \overline{\widetilde{e}}_{u}^{n}] [d_y c_h^{*,n}],d_y \overline{\widetilde{e}}_{u}^{n} \big)_y\Big), 
 \end{aligned}
\end{equation*}
which can further be estimated by Cauchy--Schwarz inequality and Lemma \ref{lem:proj:e} that
\begin{equation}\label{est:erho:rhs3f}
	|J_1 | \leq  K \tau_n  \big( \|  \bm{d} {e}_c^{*,n} \|_{\rm TM}^2
		+   \|\bm{d} c_h^{*,n}\|_\infty^2 \|\widetilde{e}_{u}^{n+1/2} \|_{\rm M}^2\big)  
		 + 2\tau_n \,\|\bm{d} \overline{\widetilde{e}}_{u}^{n}\|_{\rm TM}^2.
\end{equation}
Moreover, for the second right term $J_2$, we have
\begin{equation}\label{est:erho:rhs4f}
    |J_2| \leq 2\tau_n \|R_{u}^{n}\|_{\rm M}^2 + \f{\tau_{n}}{2}\|\widetilde{e}_{u}^{n}\|_{\rm M}^2.
\end{equation}
Due to the regular assumption on the temporal mesh, we see
\begin{equation}\label{est:erho:rhs5f}
\begin{aligned}
      \|  \bm{d} {e}_c^{*,n} \|_{\rm TM} \le  K_{\sigma} \big( \| \bm{d} {e}_c^{n-1}\|_{\rm TM} +  \| \bm{d} {e}_c^{n}\|_{\rm TM}\big), \quad 
       \|\bm{d} c_h^{*,n}\|_\infty \le K_{\sigma}\big( \| \bm{d} {c}_h^{n-1}\|_\infty +  \| \bm{d} {c}_h^{n}\|_\infty\big),
\end{aligned}
\end{equation}
where the positive constant $K_{\sigma}:={(\sigma^*+3)}/{4}$ depends only on $\sigma^*$.

Therefore, by inserting \eqref{est:erho:rhs3f}–\eqref{est:erho:rhs5f} and Lemma \ref{lem:truncErr} into \eqref{inn:CN-BCFD:rho} and summing over $n$ from $1$ to $m$ ($1 \leq m \leq N-1$), we arrive at the desired conclusion. The proof is thus complete.
\end{proof}

\begin{remark}  It should be noted that the estimates \eqref{err:u:sum}--\eqref{err:v:sum} in Lemma \ref{lem:erho} require the uniform boundedness of $\|\bm{d} c_h^{n}\|_\infty$ for all $n$. This boundedness will be proved by mathematical induction in Theorem \ref{thm:coverg}. Moreover, these estimates also rely on the estimate $\sum_{n=1}^{m} \tau_{n}\|  \bm{d} {e}_c^{n} \|_{\rm TM}^2$, the proof of which is deferred to the next lemma.
\end{remark}

\begin{lemma}\label{lem:ec} Assume that the exact solutions satisfy the regularity condition \eqref{eq:regular}. Then, there exists a positive constant $K_c$, independent of $\tau$ and $h$, such that
\begin{equation}\label{lem:ec:sum}
 	\begin{aligned}
		\|{e}_{c}^{m+1}\|_{\rm M}^2 + \| \bm{d} {e}_{c}^{m+1} \|_{\rm TM}^2  \leq  K_c\Bigl(\sum_{n=0}^{m}\tau_{n+1/2} \bigl(\|e_{u}^{n+1/2}\|_{\rm M}^2  +  \|e_{v}^{n+1/2}\|_{\rm M}^2\bigr) + \tau^4 + h^4\Bigr), 
	\end{aligned}
\end{equation}
for $0\leq m \leq N-1$. 
\end{lemma}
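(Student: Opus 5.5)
We will argue by a discrete energy method on the error equation for $e_c$. Subtracting the scheme \eqref{model:KS:shemeI3} from the truncation relation \eqref{eq:Rc_n}, and noting that $\ell_h c$ coincides with $c$ at the cell centers, gives for $0\le n\le m$
\begin{equation*}
d_\tau e_c^{n+1/2} = D_x(d_x \overline{e}_c^{n+1/2}) + D_y(d_y \overline{e}_c^{n+1/2}) - \overline{e}_c^{n+1/2} + e_u^{n+1/2} + e_v^{n+1/2} + R_c^{n+1/2}.
\end{equation*}
The coupling terms involve the projected errors $e_u^{n+1/2},e_v^{n+1/2}$, and not the predicted ones, because the exact source is written with $u_\epsilon, v_\epsilon$ in \eqref{eq:Rc_n}. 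The initial error satisfies $e_c^0=0$ at the grid points, since $c_h^0=\ell_h c^0$. This means both $\|e_c^0\|_{\rm M}$ and $\|\bm d e_c^0\|_{\rm TM}$ vanish.

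The plan is to take the inner product of this equation with $2\tau_{n+1/2}\, d_\tau e_c^{n+1/2} = 2(e_c^{n+1}-e_c^n)$, since the goal is an $H^1$-type bound. By Lemma \ref{lemma:Dd} (the fluxes $d_x e_c$, $d_y e_c$ vanish on the boundary under the Neumann BCFD convention), the diffusion term becomes $-(\bm d(e_c^{n+1}+e_c^n), \bm d(e_c^{n+1}-e_c^n))_{\rm TM}$. This telescopes to $-(\|\bm d e_c^{n+1}\|_{\rm TM}^2-\|\bm d e_c^n\|_{\rm TM}^2)$. The reaction term telescopes in the same way to $-(\|e_c^{n+1}\|_{\rm M}^2-\|e_c^n\|_{\rm M}^2)$. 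The left side gives $2\tau_{n+1/2}\|d_\tau e_c^{n+1/2}\|_{\rm M}^2$. The source terms are bounded by Cauchy--Schwarz and Young:
\begin{equation*}
2\tau_{n+1/2}\,(e_u^{n+1/2}+e_v^{n+1/2}+R_c^{n+1/2}, d_\tau e_c^{n+1/2})_{\rm M}\le \tau_{n+1/2}\|d_\tau e_c^{n+1/2}\|_{\rm M}^2 + 3\tau_{n+1/2}\bigl(\|e_u^{n+1/2}\|_{\rm M}^2+\|e_v^{n+1/2}\|_{\rm M}^2+\|R_c^{n+1/2}\|_{\rm M}^2\bigr).
\end{equation*}
The $\|d_\tau e_c\|$ term is absorbed into the left side.

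Summing over $n=0,\dots,m$ and using $e_c^0=0$, we obtain
\begin{equation*}
\|e_c^{m+1}\|_{\rm M}^2+\|\bm d e_c^{m+1}\|_{\rm TM}^2\le 3\sum_{n=0}^m\tau_{n+1/2}\bigl(\|e_u^{n+1/2}\|_{\rm M}^2+\|e_v^{n+1/2}\|_{\rm M}^2\bigr)+3\sum_{n=0}^m\tau_{n+1/2}\|R_c^{n+1/2}\|_{\rm M}^2.
\end{equation*}
By Lemma \ref{lem:truncErr} and $\sum_n\tau_{n+1/2}\le T$, the last sum is at most $K(\tau^4+h^4)$, which gives \eqref{lem:ec:sum} directly. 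No Gronwall argument is needed, because the reaction term has a favorable sign and is handled exactly by the telescoping.

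The main obstacle is making sure the truncation and initial-data pieces are consistent with the claimed orders. First, the boundary identity in Lemma \ref{lemma:Dd} must apply to $d_x \overline e_c$, which requires zero boundary fluxes. Second, the reference function in $e_c$ is $\ell_h c^{n+1}$, while \eqref{eq:Rc_n} is written with $c$ itself; since these coincide on $\Pi_x^*\times\Pi_y^*$, this is harmless. Third, the $n=0$ step of Lemma \ref{lem:truncErr} contributes $\tau_{1/2}\|R_c^{1/2}\|_{\rm M}^2\le K\tau(\tau^4+h^4)$, which is still fine.

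If instead one tested with $\overline{e}_c^{n+1/2}$, one would control only $\|e_c\|_{\rm M}^2$ plus a time-integrated gradient, not the pointwise-in-time $\|\bm d e_c^{m+1}\|_{\rm TM}^2$ needed in \eqref{err:u:sum}. That is why the test function $d_\tau e_c^{n+1/2}$ is the essential choice.
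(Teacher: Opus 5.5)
Your proposal is correct and follows the paper's own argument. You subtract \eqref{model:KS:shemeI3} from \eqref{eq:Rc_n}, test the error equation with $d_\tau e_c^{n+1/2}$ so that the diffusion and reaction terms telescope via Lemma \ref{lemma:Dd}, absorb $\|d_\tau e_c^{n+1/2}\|_{\rm M}^2$ by Young's inequality, scale by $2\tau_{n+1/2}$, sum from $n=0$ to $m$ using $e_c^0=0$, and conclude with Lemma \ref{lem:truncErr}; the only difference is your Young constant of $3$ where the paper has $2$.
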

\begin{proof} 
Subtracting \eqref{model:KS:shemeI3} from \eqref{eq:Rc_n} yields the following error equation
 \begin{equation}\label{err:CN-BCFD:c} 
	\begin{aligned}
		d_{\tau}e_c^{n+1/2}
		& =  D_x (d_x \overline{e}_{c}^{n+1/2}  )  
		+  D_y ( d_y \overline{e}_{c}^{n+1/2}  ) -  \overline{e}_{c}^{n+1/2}   + e_{u}^{n+1/2} + e_{v}^{n+1/2} +  \mrR_c^{n+1/2}.
	\end{aligned}
\end{equation}

Then, taking the discrete inner product of \eqref{err:CN-BCFD:c} with $d_{\tau}e_c^{n+1/2}$ and  applying Lemma \ref{lemma:Dd}, we obtain
\begin{equation}\label{est:Dec:inn}
\begin{aligned}
	\|d_{\tau}e_c^{n+1/2}\|_{\rm M}^2 
	& =- \big[\bigl( d_x \overline{e}_{c}^{n+1/2}, d_x d_{\tau}e_c^{n+1/2}\bigr)_x - \bigl( d_y \overline{e}_{c}^{n+1/2}, d_y d_{\tau}e_c^{n+1/2}\bigr)_y \big] \\
	& \quad -(\overline{e}_c^{n+1/2},  d_{\tau}e_c^{n+1/2})_{\rm M} 
	    +(e_{u}^{n+1/2},  d_{\tau}e_c^{n+1/2})_{\rm M} +(e_{v}^{n+1/2},  d_{\tau}e_c^{n+1/2})_{\rm M} \\
     & \quad  + (\mrR_c^{n+1/2},  d_{\tau}e_c^{n+1/2})_{\rm M}
   =: \sum_{i=1}^5 S_i.
\end{aligned}
\end{equation}
The right-hand side of \eqref{est:Dec:inn} is estimated as follows. For the first two terms, the following estimates are obtained
 \begin{equation}\label{est:Dec:rhs1}
 	\begin{aligned}
 		S_1 &=  -\f{1}{2\tau_{n+1/2}} \big[\bigl(\|d_x e_c^{n+1}\|_x^2 - \|d_x e_c^{n}\|_x^2\bigr)+ \bigl(\|d_y e_c^{n+1}\|_y^2 - \|d_y e_c^{n}\|_y^2\bigr) \big]\\
        &= -\f{1}{2\tau_{n+1/2}} \big[\| \bm{d} {e}_{c}^{n+1}\|_{\rm TM}^2-\| \bm{d} {e}_{c}^{n}\|_{\rm TM}^2\big],
 	\end{aligned}
 \end{equation}
\begin{equation}\label{est:Dec:rhs2}
	\begin{aligned}
		S_2 &= -\big( \overline{e}_{c}^{n+1/2}, d_{\tau}e_c^{n+1/2}\big)_{\rm M}
        = -\f{1}{2\tau_{n+1/2}} \big(\|e_c^{n+1}\|_{\rm M}^2 - \|e_c^{n}\|_{\rm M}^2\big).
	\end{aligned}
\end{equation}
The last three terms can be bounded by the Cauchy-Schwarz inequality and Young's inequality that
\begin{equation}\label{est:Dec:rhs3} 
	|S_3|+|S_4|+|S_5|
	\leq \|e_{u}^{n+1/2}\|_{\rm M}^2 +\|e_{v}^{n+1/2}\|_{\rm M}^2 +\f{1}{2}\|\mrR_c^{n+1/2}\|_{\rm M}^2  + \|d_{\tau}e_c^{n+1/2}\|_{\rm M}^2.
\end{equation}

Therefore, by inserting \eqref{est:Dec:rhs1}--\eqref{est:Dec:rhs3} into \eqref{est:Dec:inn}, multiplying the resulting equation by $2\tau_{n+1/2}$, 
and summing over $n$ from $0$ to $m$ for $0\leq m \leq N-1$, we obtain
 \begin{equation*}
	\begin{aligned}
		\| {e}_{c}^{m+1}\|_{\rm M}^2 + \| \bm{d} {e}_{c}^{m+1} \|_{\rm TM}^2 &\leq \| {e}_{c}^{0}\|_{\rm M}^2 + \| \bm{d} {e}_{c}^{0}\|_{\rm TM}^2 +  2\sum_{n=0}^{m} \tau_{n+1/2} \big(\|e_{u}^{n+1/2}\|_{\rm M}^2 +\|e_{v}^{n+1/2}\|_{\rm M}^2\big)\\ 
           &\quad  + \sum_{n=0}^{m}\tau_{n+1/2} \|\mrR_c^{n+1/2}\|_{\rm M}^2.
	\end{aligned}
\end{equation*}
Thus, the conclusion \eqref{lem:ec:sum} is proved by collecting the estimate in Lemma \ref{lem:truncErr}.
\end{proof}


Now, by combining the results of Lemmas \ref{lem:erho} and \ref{lem:ec} together, and using the mathematical induction method, we proceed to prove the main convergence result.
\begin{theorem}\label{thm:coverg}	Let $\{u_h^{n+1/2},v_h^{n+1/2},c_h^{n+1}\}\in \mbX_h[u] \times \mbX_h[v] \times \mbX_h$ be the solutions to the PP-MC-PBCFD scheme \eqref{model:KS:shemeI}--\eqref{model:KS:shemeI3}. Then, if $\tau \leq \min\{\tau_*,\tau_{**}\}$,  there exist unique solutions to the proposed scheme. Moreover, under the regularity assumptions \eqref{eq:regular}--\eqref{eq:regular:1} and the condition $\tau \leq K_0 h$ for some $K_0>0$, if $\tau\leq \hat{\tau}:=\min\{\tau_{*},\tau_{**},\tau_{***}\}$ and $h\le  \hat{h} :=\min\{h_*,h_{**}\} $, then there exists a positive constant $K^{*}$, independent of $\tau$, $h$ and $n$, such that
\begin{equation} \label{thm:converg:result}
		\begin{aligned}
			&\| c^{n+1}-c_h^{n+1}\|_{\rm M} 
			+  \| \nabla c^{n+1} - \bm{d}c_h^{n+1}\|_{\rm TM} + \| u^{n+1/2}-u_h^{n+1/2}\|_{\rm M} + \| v^{n+1/2}-v_h^{n+1/2}\|_{\rm M}\\
			&\quad \leq K^{*} \big(\tau^2 + h^2\big),~~ 0\leq n \leq N-1.
		\end{aligned}
\end{equation}
\end{theorem}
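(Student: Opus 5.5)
We argue by induction on $m$, with two goals: uniform boundedness of $\|\bm{d} c_h^{n}\|_\infty$ and the error bound \eqref{thm:converg:result}.

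\textbf{Unique solvability.} At each step the scheme consists of three linear problems and one projection.
\begin{itemize}
\item Step 1 gives two linear systems \eqref{model:KS:schemeI0} and \eqref{model:KS:shemeI}. Take the homogeneous system, multiply by the unknown, and use Lemma~\ref{lemma:Dd} and the Cauchy--Schwarz inequality, exactly as in the bounds for $I_1$ and $J_1$. This gives $\|w\|_{\rm M}^2 \le K\tau\,\|\bm{d} c_h^{*,n}\|_\infty^2\,\|w\|_{\rm M}^2$. Hence $w=0$ once $\tau \le \tau_{**}$, a threshold defined through a bound $\|\bm{d} c_h^{n}\|_\infty \le K_*+1$ that the induction will supply.
\item Step 2 has a unique minimizer, because it is a strictly convex problem on a nonempty closed convex set; uniqueness of the root $\xi$ follows from the monotonicity of $F$ in \eqref{eq:Lagrange:nonlinear}.
\item Step 3 has matrix $\mbb{P}$, which is symmetric positive definite with respect to the weighted inner product.
\end{itemize}

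\textbf{Induction hypothesis.} For all $n\le m$ assume
\[
\|\widetilde e_u^{n+1/2}\|_{\rm M}^2+\|\widetilde e_v^{n+1/2}\|_{\rm M}^2+\|\bm{d} e_c^{n}\|_{\rm TM}^2\le K^{*}(\tau^2+h^2)^2,
\qquad \|\bm{d} c_h^{n}\|_\infty\le K_*+1 .
\]
\emph{Base case.} We have $e_c^0=\ell_hc^0-c_h^0=0$. Estimate \eqref{err:uv:0} gives the density errors at $t_{1/2}$, and $\tau_0^4\le\tau^4$. Lemma~\ref{lem:proj:e} transfers these to $e_u^{1/2}, e_v^{1/2}$. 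Lemma~\ref{lem:ec} with $m=0$ then bounds $\bm{d} e_c^1$.

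\emph{Inductive step.} Insert the induction bound $\|\bm{d} c_h^n\|_\infty\le K_*+1$ into \eqref{err:u:sum}--\eqref{err:v:sum}. Add these to Lemma~\ref{lem:ec}, using Lemma~\ref{lem:proj:e} to replace $\|e_w\|_{\rm M}$ by $\|\widetilde e_w\|_{\rm M}$. Since $\tau_n\le K\tau_{n-1/2}$ by the mesh-ratio assumption, the sums $\sum_n\tau_n\|\bm{d} e_c^n\|_{\rm TM}^2$ and $\sum_n\tau_{n+1/2}\|\widetilde e^{n+1/2}\|_{\rm M}^2$ sit on the right with $\tau$-weights. The top-index terms ($n=m$ in the density sum, and $m+1$ after substitution) are absorbed into the left by requiring $\tau\le\tau_{***}$. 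A discrete Gronwall inequality for variable steps then gives
\[
\|\widetilde e_u^{m+1/2}\|_{\rm M}^2+\|\widetilde e_v^{m+1/2}\|_{\rm M}^2+\|e_c^{m+1}\|_{\rm M}^2+\|\bm{d} e_c^{m+1}\|_{\rm TM}^2\le K^{*}(\tau^4+h^4),
\]
where $K^*=Ke^{KT}$ depends only on $K_*$ and not on $m$. This independence is essential: the constant must not grow through the induction. It is secured by fixing the bound $K_*+1$ on $\|\bm{d} c_h\|_\infty$ in advance.

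\textbf{Recovering the $W^{1,\infty}$ bound (main obstacle).} The step that closes the induction is $\|\bm{d} c_h^{m+1}\|_\infty\le K_*+1$. Write $\bm{d} c_h^{m+1}=\bm{d}\ell_h c^{m+1}-\bm{d} e_c^{m+1}$. The first term is bounded by $K_*+Kh$: on the uniform grid, $\ell_h c$ coincides with $c$ at the nodes, so $\bm d\,\ell_h c^{m+1}$ is a difference quotient of $c$, bounded by \eqref{eq:regular:1} plus an $O(h)$ correction. For the second term, use the inverse inequality $\|\bm{d} e\|_\infty\le Kh^{-1}\|\bm{d} e\|_{\rm TM}$ (two dimensions, uniform grid). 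This gives
\[
\|\bm{d} e_c^{m+1}\|_\infty\le Kh^{-1}\sqrt{K^*}\,(\tau^2+h^2)\le K\sqrt{K^*}\,(K_0^2h+h),
\]
by the assumption $\tau\le K_0h$. This is at most $1/2$ once $h\le h_{**}$, which closes the induction.

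\textbf{Final estimate.} To reach \eqref{thm:converg:result}, apply the triangle inequality. Write $u^{n+1/2}-u_h^{n+1/2}=(u^{n+1/2}-u_\epsilon^{n+1/2})+e_u^{n+1/2}$. The first piece is $O(h^2)$ by \eqref{eq:epsilon} together with $\ell_h u=u$ at the nodes; the same applies to $v$. For the concentration, use Lemma~\ref{trun:Dp} to compare $\nabla c$ with $\bm{d} c$, which is also $O(h^2)$.
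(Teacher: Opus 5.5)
Your proposal is correct and follows essentially the same route as the paper's proof. It proceeds by induction on the uniform bound $\|\bm{d} c_h^{n}\|_\infty\le K_*+1$, proves energy-type unique solvability at each step, and combines Lemmas~\ref{lem:proj:e}, \ref{lem:erho} and \ref{lem:ec} with absorption of the top-index terms and a discrete Gr\"{o}nwall inequality, keeping the constants fixed so they do not grow through the induction. It then closes the induction with the inverse inequality under $\tau\le K_0h$ and finishes with the same triangle-inequality decomposition. Your separate treatment of Step~2 (uniqueness of the projection by strict convexity) and Step~3 (positivity of $\mathbb{P}$ in the weighted inner product) is slightly more careful than the paper, whose single difference-of-solutions energy argument tacitly treats the predicted and projected densities as the same.
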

\begin{proof} To obtain the desired estimate \eqref{thm:converg:result}, it suffices to bound the four terms $\|e_{c}^{n+1}\|_{\rm M}$, $\|\bm{d}e_{c}^{n+1}\|_{\rm TM}$, $\| {e}_{u}^{n+1/2}\|_{\rm M}$, and $\| {e}_{v}^{n+1/2}\|_{\rm M}$. Indeed, invoking the triangle inequality together with the interpolation condition \eqref{Operator:bi}--which guarantees that $\{\ell_h u^{n+1/2},\ell_h v^{n+1/2},\ell_h c^{n+1}\}$ coincides with $\{u^{n+1/2},v^{n+1/2},c^{n+1}\}$ on $\Pi_x^*\times \Pi_y^*$, we have 
\begin{equation}\label{est:thm:e1}
   \begin{aligned}
     \|c^{n+1}-c_h^{n+1}\|_{\rm M} 
     & 
     =\|e_{c}^{n+1}\|_{\rm M},\\
      \| \nabla c^{n+1} - \bm{d}c_h^{n+1}\|_{\rm TM} 
      & \le 
       \|\bm{d}e_{c}^{n+1}\|_{\rm TM} +\| \nabla c^{n+1} - \bm{d}c^{n+1}\|_{\rm TM},\\
      \|u^{n+1/2}-u_h^{n+1/2}\|_{\rm M}  
      & \le \|e_u^{n+1/2}\|_{\rm M} + \|\epsilon_u^{n+1/2}\ell_h u^{n+1/2}\|_{\rm M},\\
       \|v^{n+1/2}-v_h^{n+1/2}\|_{\rm M} 
    & \le \|e_v^{n+1/2}\|_{\rm M} + \|\epsilon_v^{n+1/2}\ell_h v^{n+1/2}\|_{\rm M}.
    \end{aligned}
\end{equation}
Moreover, using the estimates  \eqref{eq:epsilon}  for $\epsilon_u^{n+1/2}$ and $\epsilon_v^{n+1/2}$ together with the regularity assumption \eqref{eq:regular}, we obtain 
\begin{equation}\label{est:thm:e2}
    \begin{aligned}
    \|\epsilon_u^{n+1/2} \ell_h u^{n+1/2}\|_{\rm M}  &\leq   \|\epsilon_u^{n+1/2}\|_{\rm M} \|u\|_{L^{\infty}(0,T;L^{\infty})} \le K h^2,\\
     \|\epsilon_v^{n+1/2} \ell_h v^{n+1/2}\|_{\rm M} &\leq  \|\epsilon_v^{n+1/2}\|_{\rm M} \|v\|_{L^{\infty}(0,T;L^{\infty})} \le K h^2,\\        
        \|\nabla c^{n+1} - \bm d c^{n+1}\|_{\rm TM} &\leq K h^2.
    \end{aligned}
\end{equation}
Consequently, combining \eqref{est:thm:e1} and \eqref{est:thm:e2} together yields
\begin{equation}\label{est:thm:e3}
    \begin{aligned}
        &\|c^{n+1}-c_h^{n+1}\|_{\rm M} + \| \nabla c^{n+1} - \bm{d}c_h^{n+1}\|_{\rm TM}+\|u^{n+1/2}-u_h^{n+1/2}\|_{\rm M} + \|v^{n+1/2}-v_h^{n+1/2}\|_{\rm M}\\
        &\quad \leq \|e_{c}^{n+1}\|_{\rm M}   +\|\bm{d}e_{c}^{n+1}\|_{\rm TM} +\|e_u^{n+1/2}\|_{\rm M}  + \|e_v^{n+1/2}\|_{\rm M} +  K h^2.
    \end{aligned}
\end{equation}
Therefore, in what follows, we shall prove the unique solvability and establish error estimates for 
the right-hand side of \eqref{est:thm:e3} sequentially by mathematical induction, demonstrating that $\|\bm{d} c_h^{n}\|_\infty$ is uniformly bounded. More specifically, at each time step, assuming the boundedness result $\|\bm{d} c_h^{\ell}\|_\infty \leq K_*+1$ for all $\ell \le n$, we proceed in three steps: first, we prove the uniqueness of the solutions to the PP-MC-PBCFD scheme; second, we derive the optimal-order error estimates; and third, we show that $\|\bm{d} c_h^{n+1}\|_\infty \leq K_*+1$ holds, thereby closing the induction loop.

Noting that for a finite-dimensional square linear algebraic system, the uniqueness of the solution also implies the existence. Therefore, we will pay attention to the proof of uniqueness of solutions to the PP-MC-PBCFD scheme \eqref{model:KS:schemeI0}--\eqref{model:KS:shemeI3}. Assume $\{\widehat{u}_h^{n+1/2},\widehat{v}_h^{n+1/2}, \widehat{c}_h^{n+1}\}\in \mbX_h[u] \times \mbX_h[v] \times \mbX_h$ is another solution triple with the same initial values $\{u_h^{0},v_h^{0},c_h^{0}\}
\in \mbX_h[u] \times \mbX_h[v] \times \mbX_h$, and define $\varepsilon_u^{n+1/2}:=u_h^{n+1/2}-\widehat{u}_h^{n+1/2}$, $\varepsilon_v^{n+1/2}:=v_h^{n+1/2}-\widehat{v}_h^{n+1/2}$, and $\varepsilon_c^{n+1}:=c_h^{n+1}-\widehat{c}_h^{n+1}$ for $n \ge 0$. 

\paragraph{\bf Part I. Unique solvability and error estimate for $n=0$} First, for the initial time it holds that $\varepsilon_u^{0}=\varepsilon_v^{0}=\varepsilon_c^{0} \equiv 0$ and
$
	\|\bm{d} c_h^0\|_\infty = \|\bm{d} c^0\|_\infty \leq K_*.
$
It follows from \eqref{model:KS:schemeI0} that
    \begin{align}
        \f{1}{\tau_{0}}{\varepsilon_u^{1/2}} & = D_x(d_x {\varepsilon_u^{1/2}}) 
           + D_y(d_y {\varepsilon_u})  
           -  D_x \bigl([\ell_h {\varepsilon_u^{1/2}}][d_x c_h^{0}]\bigr) 
        -  D_y \bigl([\ell_h {\varepsilon_u^{1/2}}][d_y c_h^{0}]\bigr),\label{sol:eq1}\\
		\f{1}{\tau_{0}}{\varepsilon_v^{1/2}} & = D_x(d_x {\varepsilon_v^{1/2}}) 
           + D_y(d_y {\varepsilon_v^{1/2}})  
           -  D_x \bigl([\ell_h {\varepsilon_v^{1/2}}] [d_x c_h^{0}]\bigr) 
        -  D_y \bigl([\ell_h {\varepsilon_v^{1/2}}][d_y c_h^{0}]\bigr).\label{sol:eq2}
    \end{align}
    Then, taking the discrete inner product of \eqref{sol:eq1} with $\tau_0\, {\varepsilon_u^{1/2}}$, we have
    \begin{equation*}
        \begin{aligned}
        \|{\varepsilon_u^{1/2}}\|_{\rm M}^2  + \tau_0 \| \bm{d}{\varepsilon_u^{1/2}} \|_{\rm TM}^2 
        &=  \tau_0 \Big[\bigl( [\ell_h {\varepsilon_u^{1/2}}] [d_x c_h^{0}], d_x {\varepsilon_u^{1/2}} \bigr)_x 
        +   \bigl( [\ell_h {\varepsilon_u^{1/2}}] [d_y c_h^{0}], d_y {\varepsilon_u^{1/2}} \bigr)_y \Big]\\
        &\leq \f{\tau_0 \|\bm{d} c_h^{0}\|_\infty^2}{4}  \|{\varepsilon_u^{1/2}}\|_{\rm M}^2
        + \tau_0 \| \bm{d}{\varepsilon_u^{1/2}} \|_{\rm TM}^2\\
        &\le \f{\tau_0 K_*^2}{4}  \|{\varepsilon_u^{1/2}}\|_{\rm M}^2
        + \tau_0 \| \bm{d}{\varepsilon_u^{1/2}} \|_{\rm TM}^2,
        \end{aligned}
    \end{equation*}
    which implies that
\begin{equation*}\label{uniqu:Un}
    \Big(1-\f{\tau_0 K_*^2}{4}\Big) \|{\varepsilon_u^{1/2}}\|_{\rm M}^2 \leq 0
    \Longrightarrow {\varepsilon_u^{1/2}} = 0 \Longrightarrow u_h^{1/2}=\widehat{u}_h^{1/2},\quad \text{for}~\tau_0 \leq \tau_*  <4/K_*^2.
\end{equation*}
    Similarly, taking the discrete inner product of \eqref{sol:eq2} with $\tau_0 \varepsilon_v^{1/2}$, we can obtain
    \begin{equation*}\label{uniqu:Vn}
    \Big(1-\f{\tau_0 K_*^2}{4}\Big) \|\varepsilon_v^{1/2}\|_{\rm M}^2 \leq 0
    \Longrightarrow \varepsilon_v^{1/2} = 0 \Longrightarrow v_h^{1/2}=\widehat{v}_h^{1/2},\quad \text{for}~\tau_0 \leq \tau_*.
    \end{equation*}   
Furthermore, by \eqref{model:KS:shemeI3} with $n=0$  and the proved uniqueness result of $\{u_h^{1/2},v_h^{1/2}\}$, we have  
    \begin{align}
        \f{1}{\tau_{1/2}}\varepsilon_c^{1} &= \f{1}{2} D_x(d_x \varepsilon_c^{1}) + \f{1}{2} D_y(d_y \varepsilon_c^{1})
        - \f{1}{2}\varepsilon_c^{1}.\label{sol:eq3}
    \end{align}
Thus, taking the discrete inner product of \eqref{sol:eq3} with $\tau_{1/2}\varepsilon_c^1$, we obtain
\begin{equation*}\label{uniqu:cn}
    (1+\f{\tau_{1/2}}{2}) \|\varepsilon_c^{1}\|_{\rm M}^2  + \f{\tau_{1/2}}{2} \| \bm{d}\varepsilon_c^1 \|_{\rm TM}^2 = 0 
        \Longrightarrow \varepsilon_c^1 = 0 \Longrightarrow c_h^1=\widehat{c}_h^1.
\end{equation*}

Next, Lemma \ref{lem:ec} with $m=0$, and Lemmas \ref{lem:proj:e}--\ref{lem:erho} directly implies that
 \begin{equation}\label{reslt:thm:t1c}
	\| {e}_{c}^{1}\|_{\rm M}^2 	+ \| \bm{d} {e}_{c}^{1} \|_{\rm TM}^2  
       \leq  K_c\Bigl(\tau_{1/2} \bigl(\|\widetilde{e}_{u}^{1/2}\|_{\rm M}^2  +  \|\widetilde{e}_{v}^{1/2}\|_{\rm M}^2\bigr) + \tau^4 + h^4\Bigr)
       \leq K_1\big(\tau^4 + h^4\big),\quad \text{for}~\tau_{1/2} \le 1,
\end{equation}
where $K_1:=K_cK_u + K_cK_v + K_c$ is a constant  that is independent of $\tau$ and $h$, depending only on $K_c$, $K_u$ and $K_v$. Thus, it is straightforward to derive the following estimate 
  \begin{equation}\label{reslt:thm:t1}
		\| {e}_{c}^{1}\|_{\rm M}^2 
		+ \| \bm{d} {e}_{c}^{1} \|_{\rm TM}^2 + \| {e}_{u}^{1/2}\|_{\rm M}^2 + \| {e}_{v}^{1/2}\|_{\rm M}^2 \leq K_2 \big(\tau^4 + h^4 \big), \quad \text{for}~ \tau\le \tau_{*}, 
\end{equation}
by combining \eqref{err:uv:0} of Lemma \ref{lem:erho}, Lemma \ref{lem:proj:e}, and \eqref{reslt:thm:t1c} together,
where $K_2:= K_1 + K_u + K_v$ is a constant independent of $\tau$ and $h$, depending only on $K_c$, $K_u$ and $K_v$.

Finally, note that \eqref{reslt:thm:t1c} directly implies that
\begin{equation*}
		 \| \bm{d} {e}_{c}^{1} \|_{\rm TM}  
     \leq \sqrt{K_1} \big( \tau^2 + h^2\big).
\end{equation*}
Then, applying the triangle inequality, the inverse estimate with constant $K_{inv}$, and assumption \eqref{eq:regular:1}, we obtain
\begin{equation}\label{inf:dc}
	\begin{aligned}
	\|\bm{d} c_h^1\|_\infty &\leq  \| \bm{d} c^{1} -\bm{d}e_c^1 \|_\infty
	\leq K_* + \|\bm{d}e_c^1\|_\infty
 	\leq K_*+ K_{inv} \sqrt{K_1}\big( h^{-1}\tau^2 + h \big)
    \leq K_* + 1,
 	\end{aligned}
\end{equation}
provided that $\tau \leq K_0 h$ for some $K_0>0$ and $0< h \le h_*$, where the positive constant $h_*$ is chosen such that 
$
 K_{inv}\sqrt{K_1} \big(1 + K_0^2\big) h_* \leq 1.
$

\paragraph{\bf Part II. Unique solvability and error estimate for $n \ge 1$}
Now, suppose that $\|\bm{d} c_h^{\ell}\|_{\infty} \leq K_* + 1$ for all $\ell\le n$ with some $n \ge 1$ and $\varepsilon_u^{n-1/2}=\varepsilon_v^{n-1/2}=\varepsilon_c^n=\varepsilon_c^{n-1} = 0$ hold. 

In the following, we first prove that $\varepsilon_u^{n+1/2}=\varepsilon_v^{n+1/2}=\varepsilon_c^{n+1} = 0$ to conclude the unique solvability.
It follows from \eqref{model:KS:shemeI} and \eqref{model:KS:shemeI3} that
\begin{align}
    \f{1}{\tau_{n}}{\varepsilon_u^{n+1/2}} 
    & = \f{1}{2} D_x(d_x {\varepsilon_u^{n+1/2}}) 
           + \f{1}{2} D_y(d_y {\varepsilon_u^{n+1/2} }) 
           - \f{1}{2} D_x \bigl([\ell_h {\varepsilon_u^{n+1/2}}][d_x c_h^{*,n}]\bigr) 
        \label{sol:equ}\\
        &\quad - \f{1}{2} D_y \bigl([\ell_h {\varepsilon_u^{n+1/2}}][d_y c_h^{*,n}]\bigr),\nonumber \\
		\f{1}{\tau_{n}}{\varepsilon_v^{n+1/2}} & = \f{1}{2} D_x(d_x {\varepsilon_v^{n+1/2}}) 
           + \f{1}{2} D_y(d_y {\varepsilon_v^{n+1/2}})  
           - \f{1}{2} D_x \bigl([\ell_h {\varepsilon_v^{n+1/2}}][d_x c_h^{*,n}]\bigr) \label{sol:eqv}\\
        &\quad-  \f{1}{2} D_y \bigl([\ell_h {\varepsilon_v^{n+1/2}}][d_y c_h^{*,n}]\bigr),\nonumber\\
        \f{1}{\tau_{n+1/2}}\varepsilon_c^{n+1} &= \f{1}{2} D_x(d_x \varepsilon_c^{n+1}) + \f{1}{2} D_y(d_y \varepsilon_c^{n+1})
        - \f{1}{2}\varepsilon_c^{n+1} + \varepsilon_u^{n+1/2} + \varepsilon_v^{n+1/2}, \label{sol:eqc}
    \end{align}
where we have used the fact that $\bm{d} \widehat{c}_h^{*,n}= \bm{d} c_h^{*,n}= \frac{t_n-t_*}{\tau_{n-1/2}} \bm{d} c_h^{n-1}+\frac{t_*-t_{n-1}}{\tau_{n-1/2}} \bm{d} c_h^n$ due to the unique solvability at the former time levels.

Then, by taking the discrete inner product of \eqref{sol:equ} with $\tau_n {\varepsilon_u^{n+1/2}}$, and applying the Cauchy-Schwarz inequality, Young's inequality, and Lemma \ref{lemma:Dd} yield  
\begin{equation}\label{uniqu:Un+1:est}
        \begin{aligned}
        &\|{\varepsilon_u^{n+1/2}}\|_{\rm M}^2  + \f{\tau_n}{2} \| \bm{d}{\varepsilon_u^{n+1/2}} \|_{\rm TM}^2\\ 
        &= \f{\tau_n}{2} \left( [\ell_h \varepsilon_u^{n+1/2}] [d_x {c}_h^{*,n}], d_x {\varepsilon_u^{n+1/2}} \right)_x 
        + \f{\tau_n}{2}\left( [\ell_h \varepsilon_u^{n+1/2}] [d_y {c}_h^{*,n}], d_y {\varepsilon_u^{n+1/2}}\right)_y\\
        &\leq \f{\tau_n \|\bm{d} {c}_h^{*,n}\|_\infty^2}{8}  \|{\varepsilon_u^{n+1/2}}\|_{\rm M}^2
        + \f{\tau_n}{2} \| \bm{d}{\varepsilon_u^{n+1/2}} \|_{\rm TM}^2\\
        &\leq \f{\tau_n (K_*+1)^2}{4} \left[\Big( \f{1}{4}-\f{\tau_{n+1/2}}{4\tau_{n-1/2}}\Big)^2 + \Big( \f{3}{4}+\f{\tau_{n+1/2}}{4\tau_{n-1/2}}\Big)^2\right] \|{\varepsilon_u^{n+1/2}}\|_{\rm M}^2
        + \f{\tau_n}{2} \| \bm{d}{\varepsilon_u^{n+1/2}} \|_{\rm TM}^2,
        \\
        &\leq \f{\tau_n (K_*+1)^2}{32} \left( (\sigma^*)^2+ 2\sigma^*+ 5 \right) \|{\varepsilon_u^{n+1/2}}\|_{\rm M}^2
        + \f{\tau_n}{2} \| \bm{d}{\varepsilon_u^{n+1/2}} \|_{\rm TM}^2,
        \end{aligned}
    \end{equation}
 where the assumption $\tau_{n+1/2}/\tau_{n-1/2}\leq\sigma^*$ is used. This further implies that
\begin{equation*}\label{uniqu:Un+1}
    \Big(1-\f{\tau_n (({\sigma^*})^2+2\sigma^*+ 5)(K_*+1)^2}{32}\Big) \|{\varepsilon_u^{n+1/2}}\|_{\rm M}^2 \leq 0
    \Longrightarrow {\varepsilon_u^{n+1/2}} = 0 \Longrightarrow u_h^{n+1/2}=\widehat{u}_h^{n+1/2},
\end{equation*}
provided that $\tau \leq \tau_{**} < \f{32}{(({\sigma^*})^2+2\sigma^*+ 5)(K_*+1)^2}$.

Similarly, taking the discrete inner product of \eqref{sol:eqv} with $\tau_n {\varepsilon_v^{n+1/2}}$, and applying the Cauchy-Schwarz inequality, Young's inequality, and Lemma \ref{lemma:Dd} yield a similar result to \eqref{uniqu:Un+1:est} that
\begin{equation*}\label{uniqu:Vn+1}
    \Big(1-\f{\tau_n (({\sigma^*})^2+2\sigma^*+ 5)(K_*+1)^2}{32}\Big) \|{\varepsilon_v^{n+1/2}}\|_{\rm M}^2 \leq 0
    \Longrightarrow {\varepsilon_v^{n+1/2}} = 0 \Longrightarrow v_h^{n+1/2}=\widehat{v}_h^{n+1/2},
\end{equation*}
provided that $\tau \leq \tau_{**}$. 

Furthermore, taking the discrete inner product of \eqref{sol:eqc} with $\tau_{n+1/2}\varepsilon_c^{n+1}$, and using the proved uniqueness results of $\{u_h^{n+1/2},v_h^{n+1/2}\}$, we have
\begin{equation*}\label{uniqu:cn+1}
        \Big(1+\f{\tau_{n+1/2}}{2}\Big) \|\varepsilon_c^{n+1}\|_{\rm M}^2  + \f{\tau_{n+1/2}}{2} \| \bm{d}\varepsilon_c^{n+1} \|_{\rm TM}^2 
        =  0 
        \Longrightarrow \varepsilon_c^{n+1} = 0 \Longrightarrow c_h^{n+1}=\widehat{c}_h^{n+1}.
\end{equation*}
Consequently, the existence and uniqueness of the solutions $\{u_h^{n+1/2},v_h^{n+1/2},c_h^{n+1}\}$ are proved.

Next, under the bounded assumption on $\|\bm{d} c_h^{\ell}\|_{\infty}$ for all $\ell\le n$, by adding \eqref{err:u:sum}--\eqref{err:v:sum} from Lemma \ref{lem:erho} and \eqref{lem:ec:sum} from Lemma \ref{lem:ec} with $m=n$, and applying Lemma \ref{lem:proj:e}, we obtain
  \begin{equation} \label{est:ec:l-0}
	\begin{aligned}
		&\| {e}_c^{n+1}\|_{\rm M}^2 	
             +\| \bm{d} {e}_{c}^{n+1} \|_{\rm TM}^2 + \| \widetilde{e}_{u}^{n+1/2}\|_{\rm M}^2 +\| \widetilde{e}_{v}^{n+1/2}\|_{\rm M}^2 \\
		&\quad\leq K_3 \Big(\sum_{m=0}^{n}\tau_{m} \| \bm{d} {e}_{c}^{m} \|_{\rm TM}^2 
		+   \sum_{m=0}^{n}\tau_{m+1/2} \left(\|\widetilde{e}_{u}^{m+1/2}\|_{\rm M}^2 + \|\widetilde{e}_{v}^{m+1/2}\|_{\rm M}^2 \right)+ \tau^4 + h^4\Big),
	\end{aligned}
\end{equation}
where $K_3:=2(K_u+K_v) ((K_*+1)^2+1) + K_c$ is a constant independent of $\tau$ and $h$, depending only on $K_c$, $K_u$, $K_v$ and $K_*$. Thus, an application of the discrete Gr\"{o}nwall's inequality and Lemma \ref{lem:proj:e} to \eqref{est:ec:l-0} directly yields   
  \begin{equation} \label{est:ec:l-m+1}
	\begin{aligned}
		&\|e_{c}^{n+1}\|_{\rm M}^2+\|\bm{d}e_{c}^{n+1}\|_{\rm TM}^2+\| {e}_{u}^{n+1/2}\|_{\rm M}^2 + \| {e}_{v}^{n+1/2}\|_{\rm M}^2\\
        &\quad \le \| {e}_c^{n+1}\|_{\rm M}^2 	
             +\| \bm{d} {e}_{c}^{n+1} \|_{\rm TM}^2 + \| \widetilde{e}_{u}^{n+1/2}\|_{\rm M}^2 +\| \widetilde{e}_{v}^{n+1/2}\|_{\rm M}^2 \leq  K_4 \big( \tau^4 + h^4\big),
	\end{aligned}
\end{equation}
for $\tau\le \tau_{***}:=1/(2K_3)$, where $K_4:=2K_3e^{2K_3T}$ is a constant independent of $\tau$ and $h$, depending only on $K_c$, $K_u$, $K_v$, $K_*$ and  $T$. 

Finally, we show that  $\|\bm{d} c_h^{n+1}\|_{\infty} \leq K_*+1$.
Following the same approach as in \eqref{inf:dc} and using the estimate \eqref{est:ec:l-m+1}, we obtain
\begin{equation}\label{inf:dzm+1}
		\|\bm{d} c_h^{n+1}\|_\infty =\|\bm{d} c^{n+1}-\bm{d} e_c^{n+1}\|_\infty
    \leq K_* + K_{inv} \sqrt{K_4} \big(h^{-1}\tau^2 + h \big)
    \leq K_* + 1,
\end{equation}
provided that $\tau \leq K_0 h$ and $0<h<h_{**}$, where $h_{**}$ is chosen such that $K_{inv}\sqrt{K_4}\big(1 + K_0^2 \big) h_{**} \leq 1$. This completes the induction process for $\ell = n+ 1$. 
Consequently, the theorem is proved by choosing $\tau\leq \hat{\tau}:=\min\{\tau_{*},\tau_{**},\tau_{***}\}$ and $h\le  \hat{h} :=\min\{h_*,h_{**}\} $ sufficiently small, and then inserting \eqref{reslt:thm:t1} and \eqref{est:ec:l-m+1} into \eqref{est:thm:e3}. 
\end{proof} 

\begin{remark}
	From the proof of Theorem \ref{thm:coverg}, we have the following observations.
	\begin{itemize}
		\item[(i)] The discrete $L^2$ projection \eqref{model:KS:shemeI2} in {\bf Step 2} of the proposed scheme serves a dual purpose: it preserves positivity and enforces mass conservation. However, the correction step is not limited to this projection; similar results can be obtained using, for instance, the discrete $H^1$ projection \cite{TC'24}. 
        Moreover, the estimate established in Lemma \ref{lem:proj:e}, which follows directly from this projection, plays a crucial role in the error analysis.
        
		\item[(ii)] For the intermediate solutions $\widetilde{u}_h^{n+1/2}$ and $\widetilde{v}_h^{n+1/2}$, following the similar decomposition estimates \eqref{est:thm:e1}--\eqref{est:thm:e2} and using \eqref{est:ec:l-m+1}, we have       
      \begin{equation*} 
         \begin{aligned}
        &\|u^{n+1/2}-\widetilde{u}_h^{n+1/2}\|_{\mrM} + \|v^{n+1/2}-\widetilde{v}_h^{n+1/2}\|_{\mrM}\\
        &\quad \le \|\widetilde{e}_{u}^{n+1/2}\|_{\rm M} + 
        \|\widetilde{e}_{v}^{n+1/2}\|_{\rm M} + \|\epsilon_u^{n+1/2}\|_{\rm M} \|u\|_{L^{\infty}(0,T;L^{\infty})}
        + \|\epsilon_v^{n+1/2}\|_{\rm M} \|v\|_{L^{\infty}(0,T;L^{\infty})}\\
        &\quad \leq K \big(\tau^2 + h^2\big).
       \end{aligned}
     \end{equation*}

 	\item[(iii)] The proposed second-order PP-MC-PBCFD scheme \eqref{model:KS:schemeI0}–\eqref{model:KS:shemeI3} is also applicable to the three-dimensional (3D) Keller–Segel chemotaxis system, and the error analysis remains valid with a minor modification: the inverse inequality \eqref{inf:dzm+1} is replaced by its 3D counterpart
         \begin{equation*}\label{inf:3d:dzm+1}
		    \|\bm{d} c_h^{n+1}\|_\infty 
            \leq K_* + K_{inv} \sqrt{K_4} \big(h^{-3/2}\tau^2 + h^{1/2} \big) \leq K_* + 1,
         \end{equation*}
provided that $\tau \leq K_0 h$ and $h \le h_{***}$, with $h_{***}$ chosen such that $K_{inv}^2 K_4 \big(1 + K_0^2 \big)^2 h_{***} \leq 1$. 
	\end{itemize}
\end{remark}

\section{Numerical results}\label{sec:num}

In this section, we conduct several numerical experiments using the proposed PP-MC-PBCFD scheme (Eqs. \eqref{model:KS:schemeI0}–\eqref{model:KS:shemeI3}) to demonstrate its accuracy, verify the preservation of three important physical laws: positivity, mass conservation, and energy dissipation. Meanwhile, the blow‑up dynamics of the multi‑species Keller–Segel chemotaxis system on staggered non‑uniform spatio-temporal grids are also simulated. 

In the following, we introduce the non-uniform temporal and spatial grid partitions as follows:
\begin{equation}\label{eq:adap}
    \begin{aligned}
	t_{n} &= t_{{\rm fix},n} + \mu \, \Delta t  \, (-1 + 2\,rand),\quad n=1,\ldots,N-1,\\
    x_{i+1/2} &= x_{{\rm fix},i+1/2} + \nu \, \Delta x\, (-1+2\, rand),\quad i=1,\ldots,N_x-1,\\
    y_{j+1/2} &= y_{{\rm fix},j+1/2} + \omega \, \Delta y\, (-1+2\, rand),\quad j=1,\ldots,N_y-1,
    \end{aligned}
\end{equation}
where $t_{{\rm fix},n}= n\Delta t \,(n=0,\ldots,N)$, $x_{{\rm fix},i+1/2}=a^x+i\Delta x\,(i=0,\ldots,N_x)$, $y_{{\rm fix},j+1/2}=a^y+j\Delta y\,(j=0,\ldots,N_y)$, with uniform grid sizes $\Delta t = {T}/{N}$, $\Delta x= (b^x-a^x)/N_x$, and $\Delta y= (b^y-a^y)/N_y$.  Here, $\mu$, $\nu$ and $\omega$ are small mesh parameters that control the extent of the random mesh perturbations within a specific range.  In particular, when $\mu=\nu=\omega=0$, \eqref{eq:adap} defines a uniform spatio-temporal partition. The symbol $rand$ denotes a uniformly distributed random number in $[0,1]$. 
In what follows, we assume $N_x=N_y (=N_z) \equiv M$ and $\mu=\nu=\omega$.

\begin{example}[Accuracy test]\label{exam:s1}
In this example, we consider the two-species Keller--Segel chemotaxis model \eqref{model:KS} with source terms $(f_1,f_2,g)$ as follows:
\begin{equation*}
	\begin{aligned}
		  \p_t u  &= \Delta u- \nabla \cdot(u \nabla c) + f_1,  & \qquad \text{in}~~ \Omega \times(0, T],\\
		\p_t v  &= \Delta v- \nabla \cdot(v \nabla c) + f_2,  & \qquad \text{in}~~ \Omega \times(0, T],\\
		\p_t c  &=  \Delta c-  c+ u  +  v + g,  & \qquad \text{in}~~ \Omega \times(0, T],
	\end{aligned}
\end{equation*}
where the computational domain $\Omega=(-0.5,0.5)^2$ and $T=1$, and the manufactured exact solutions are taken as
\begin{equation*}
\begin{aligned}
	u(\bm x, t) &= \sin(\pi x)\sin(\pi y)\sin(t)+1.1,\\
	v(\bm x, t) &= \f{1}{2\pi^2+1} \sin(\pi x)\sin(\pi y)\sin(t)+1.1, \\
	c(\bm x,t) &= \sin(\pi x)\sin(\pi y)\sin(t) + 1.1.
\end{aligned}
\end{equation*}
\end{example}

This example is mainly used to test the accuracy of the PP-MC-PBCFD scheme on staggered non‑uniform spatio-temporal grids. To numerically evaluate both the spatial and temporal accuracy, we set the grid sizes in \eqref{eq:adap} such that $\Delta x=\Delta t=1/M$, and measure the discrete $L^2$ errors for the approximations of $u$, $v$, $c$ and $\nabla c$. As shown in Table \ref{tab:accuracy}, the scheme clearly exhibits second‑order convergence in both time and space on uniform grids (i.e., $\mu=0$), which is fully consistent with the theoretical conclusion in Theorem \ref{thm:coverg}.  Furthermore, for small mesh perturbations with $\mu=0.1$ and $0.3$, although a rigorous convergence proof for non‑uniform spatial partitions is not yet available, the numerical results in Table \ref{tab:accuracy} nonetheless indicate that the scheme maintains second‑order accuracy. This observation is accordance with the theoretical proof established for a linearized non-uniform grid mass-conservative BCFD scheme developed in \cite{XF'25} for the Keller–Segel chemotaxis model.
\begin{table}[!t]
\vskip -5pt
	\centering 
    \caption{$L^2$ errors of $u$,$v$, $c$ and $\nabla c$ for the PP-MC-PBCFD scheme for Example \ref{exam:s1}. } \label{tab:accuracy}
	{\footnotesize
        \begin{tabular*}{\columnwidth}{@{\extracolsep\fill}c|ccccccccc@{\extracolsep\fill}}
		\toprule
			$\mu$  & $M$ &  $\|u-u_h\|_{\rm M}$&  Order&  $\|v-v_h\|_{\rm M}$& Order &  $\|c-c_h\|_{\rm M}$& Order  &$\|\nabla c-\bm{d} c_h\|_{\rm TM}$& Order\\
		\midrule
		\multirow{5}*{\makecell{ $0$}}
	&10 & 4.48e-03 & ---    & 6.30e-04 & ---    & 3.95e-03 &--&9.82e-03 & ---\\
	&20 & 1.14e-03 & 1.97  & 1.62e-04 & 1.96  & 9.85e-04 & 2.01 &2.45e-03 & 2.00\\
	&40 & 2.88e-04 & 1.99  & 4.11e-05 & 1.98  & 2.46e-04 & 2.00 &6.12e-04 & 2.00\\
	&80 & 7.24e-05 & 1.99  & 1.04e-05 & 1.99  & 6.15e-05 & 2.00 &1.53e-04 & 2.00\\
		\midrule
		\multirow{5}*{\makecell{ $0.1$}}
	&10 & 4.67e-03 & ---    & 6.52e-04 & ---    & 4.09e-03 &--&1.02e-02 & ---\\
	&20 & 1.18e-03 & 1.98  & 1.65e-04 & 1.98  & 1.02e-03 & 2.00 &2.58e-03 & 1.98\\
	&40 & 3.02e-04 & 1.97  & 4.11e-05 & 2.00  & 2.65e-04 & 1.95 &6.75e-04 & 1.94\\
	&80 & 7.69e-05 & 1.97  & 1.06e-05 & 1.96  & 6.46e-05 & 2.03 &1.65e-04 & 2.03\\
		\midrule
		\multirow{5}*{\makecell{ $0.3$}}
	&10 & 6.66e-03 & ---    & 9.13e-04 & ---    & 5.44e-03 &---&1.52e-02 & ---\\
	&20 & 1.65e-03 & 2.01  & 1.92e-04 & 2.25  & 1.43e-03 & 1.92 &3.96e-03 & 1.94\\
	&40 & 4.38e-04 & 1.92  & 5.36e-05 & 1.84  & 3.50e-04 & 2.03 &9.73e-04 & 2.03\\
	&80 & 1.17e-04 & 1.91  & 1.35e-05 & 1.98  & 8.79e-05 & 1.99 &2.45e-04 & 1.99\\
		\bottomrule
	\end{tabular*}
 }
\end{table}

\begin{example}[Physical property-preserving in 2D]\label{exam:less8pi_1}
As shown in Refs. \cite{HS'24,EVC'12,L'24},  if the total initial mass $M[u^0]+M[v^0]<8\pi$, the global solutions for the two-species Keller–Segel chemotaxis system exist. In this example, we take $\Omega=(0,1) ^2$ and the initial conditions 
	\begin{align*}
		u^0(\bm{x})&=10 \exp \left(-2\big((x-0.5)^2+(y-0.5)^2\big)\right),\\
		v^0(\bm{x})&=2 \exp \left(-\big((x-0.5)^2+(y-0.5)^2\big)\right),\\
		c^0(\bm{x})&= \exp \left(-0.5\big((x-0.5)^2+(y-0.5)^2\big) \right),
	\end{align*}  
such that $M[u^0]+M[v^0] \approx 9.02 < 8\pi$ and the global solutions exist. 
\end{example}

\begin{figure}[!ht]
	\centering
	\includegraphics[width=0.32\linewidth]{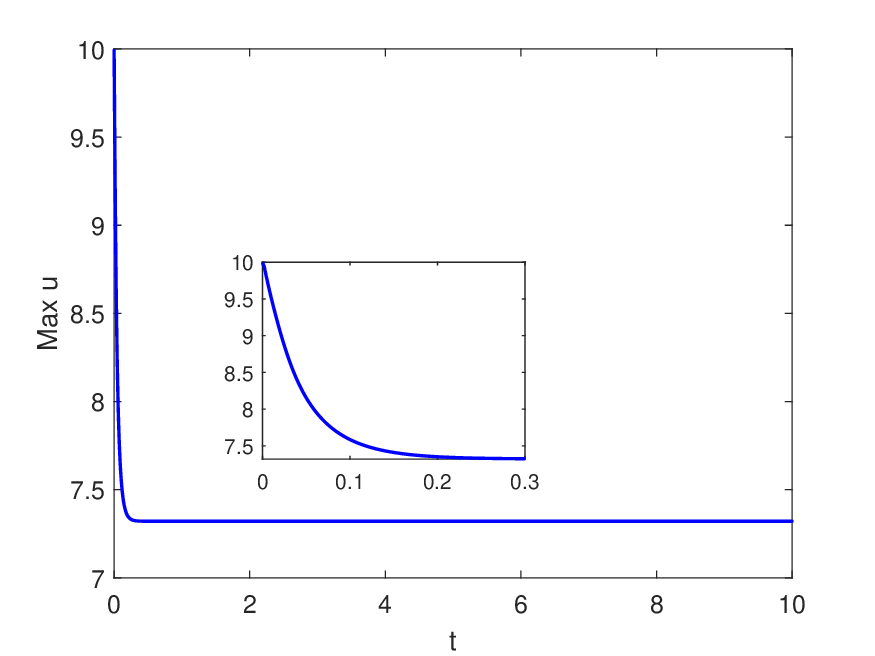}
	\includegraphics[width=0.32\linewidth]{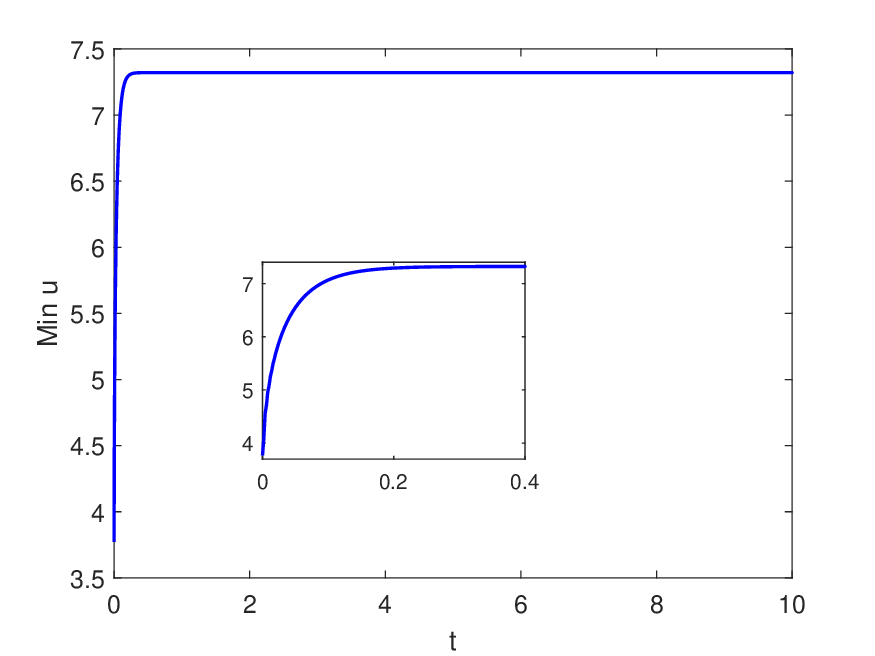}
	\includegraphics[width=0.32\linewidth]{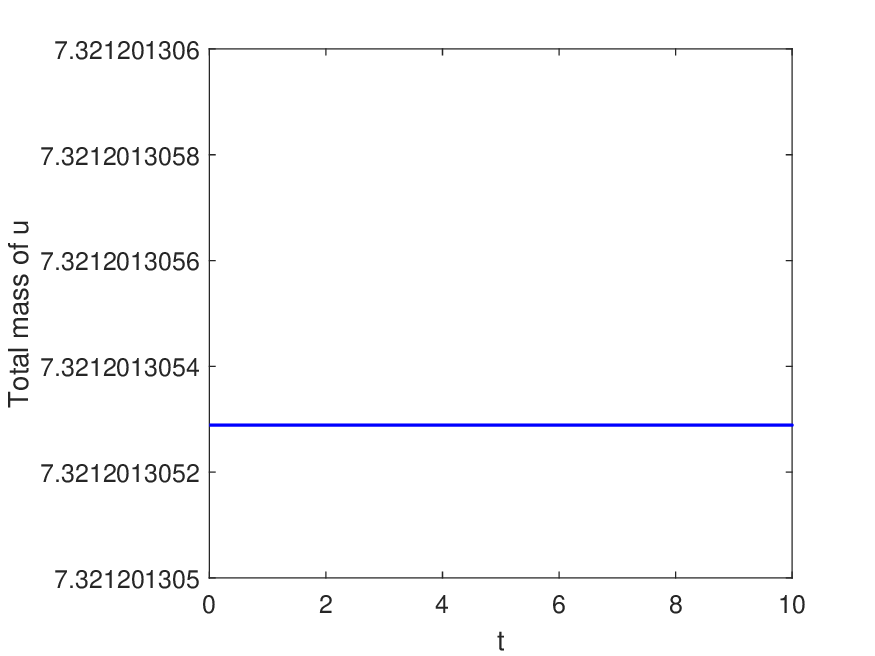}
	\includegraphics[width=0.32\linewidth]{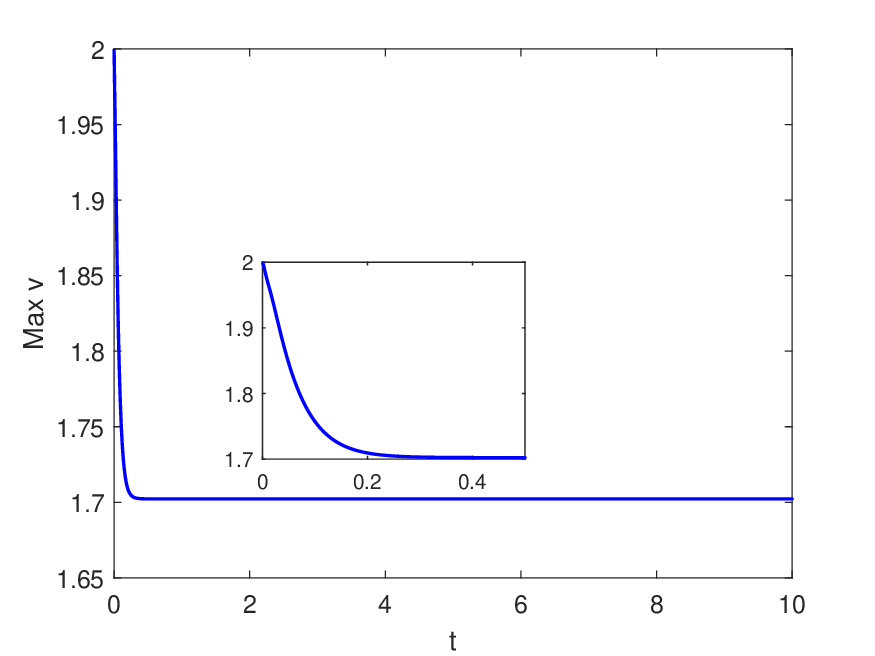}
	\includegraphics[width=0.32\linewidth]{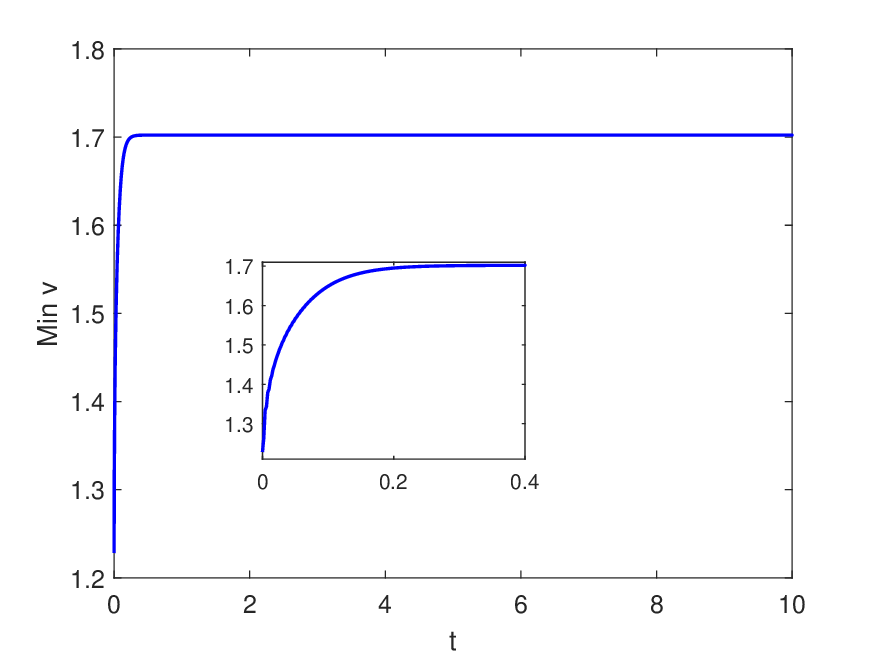}
	\includegraphics[width=0.32\linewidth]{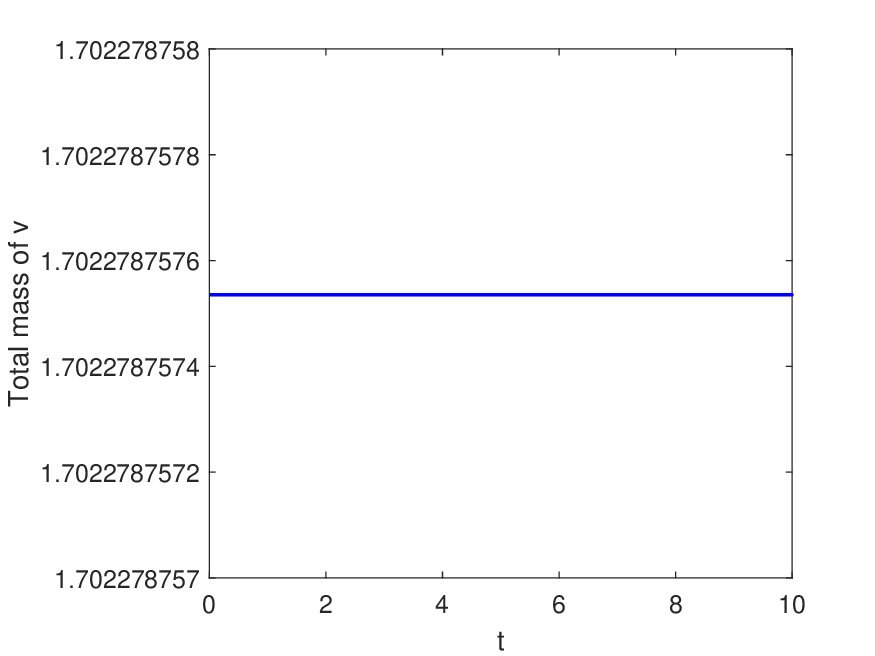}
	\includegraphics[width=0.32\linewidth]{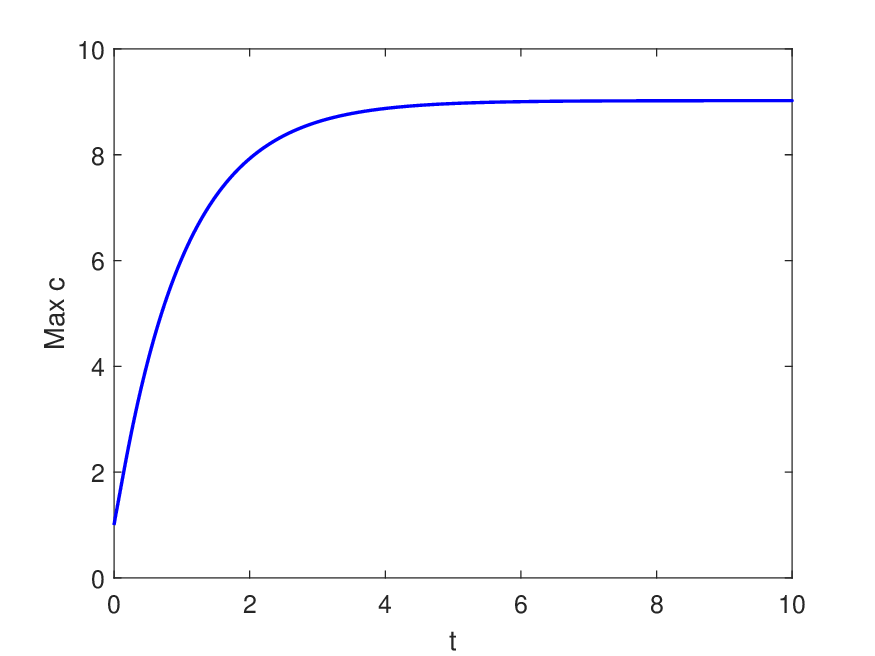}
	\includegraphics[width=0.32\linewidth]{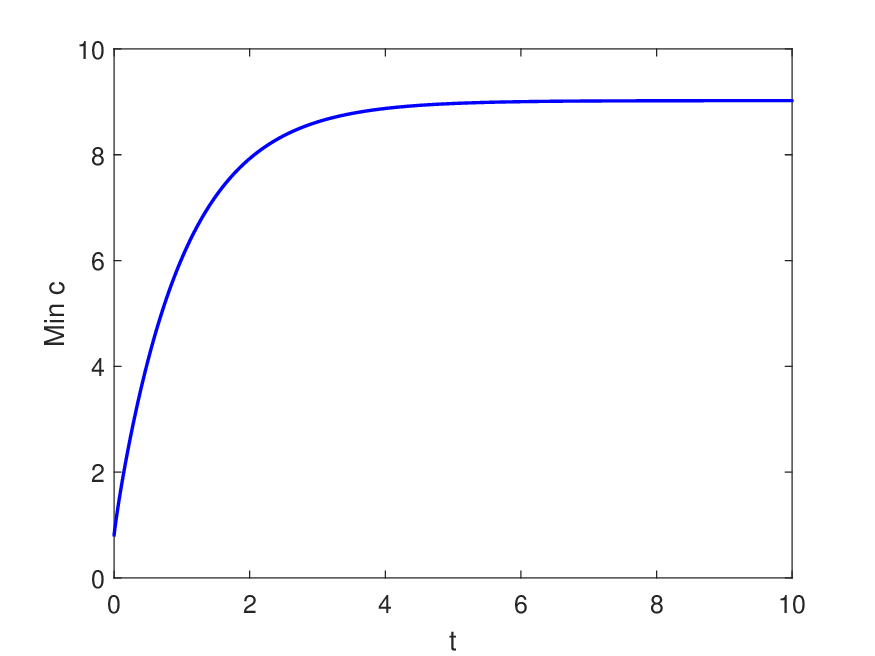}
	\includegraphics[width=0.32\linewidth]{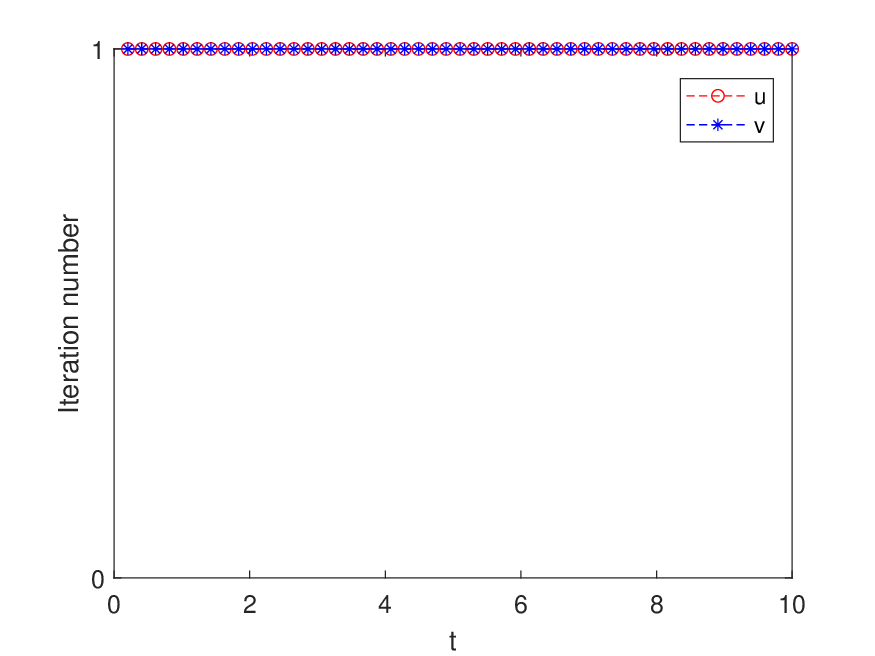}
	\caption{Evolution of the extrema of $u$, $v$, $c$, and the total mass and iteration numbers for $u$ and $v$ ($\mu=0$) for Example \ref{exam:less8pi_1}. }
	\label{fig:eg2:mass}
\end{figure}
\begin{figure}[!t]
	\centering
	\includegraphics[width=0.32\linewidth]{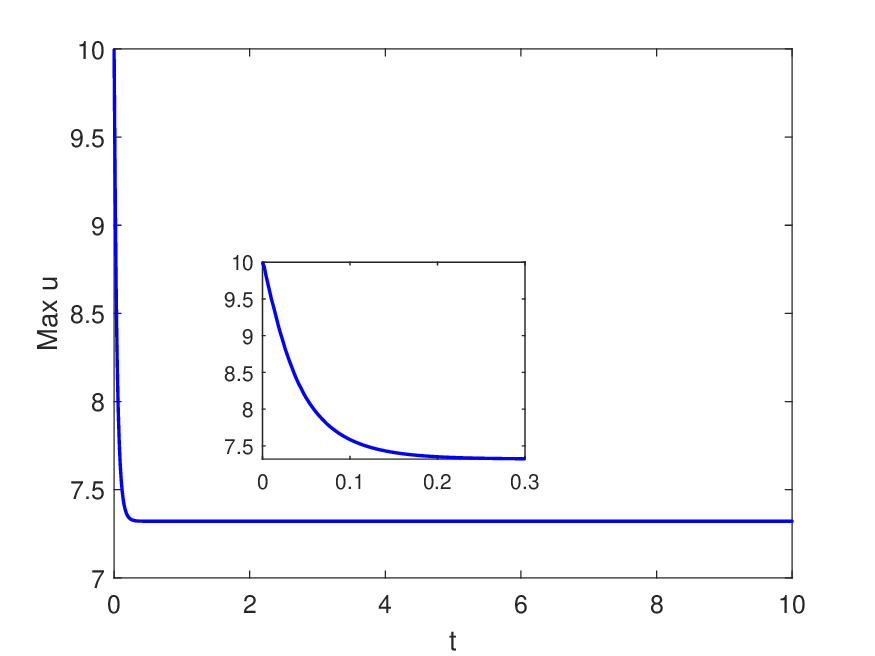}
	\includegraphics[width=0.32\linewidth]{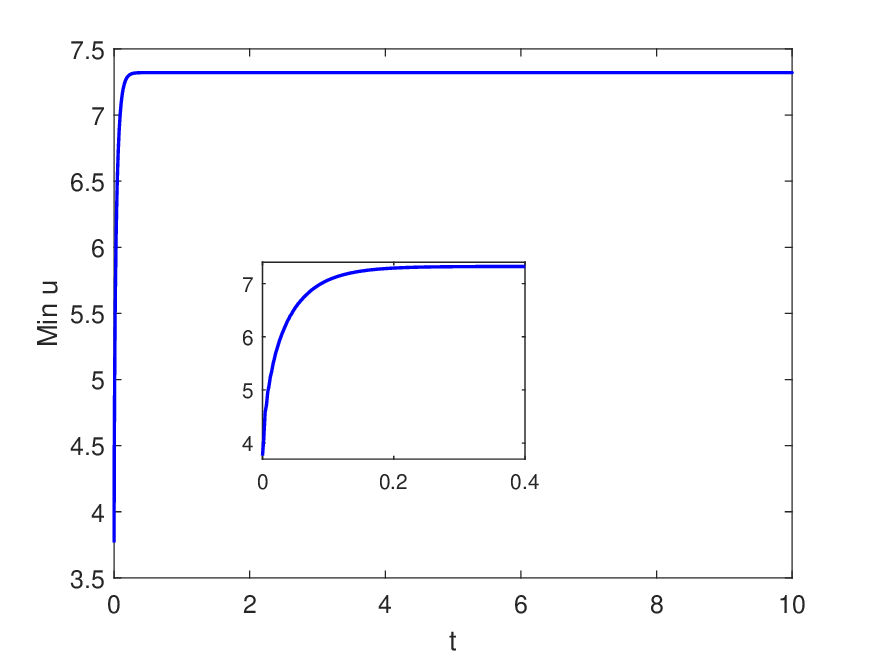}
	\includegraphics[width=0.32\linewidth]{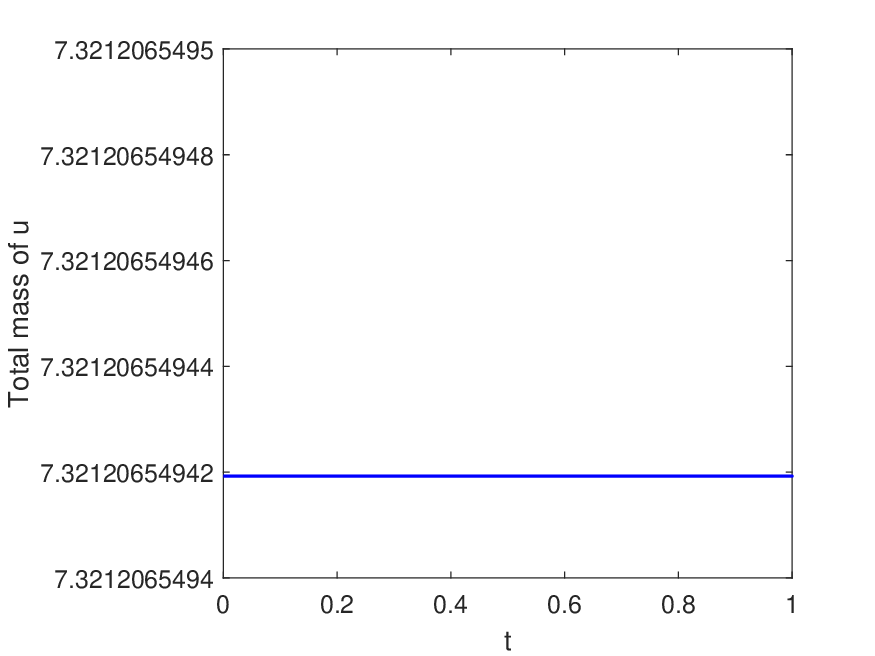}
	\includegraphics[width=0.32\linewidth]{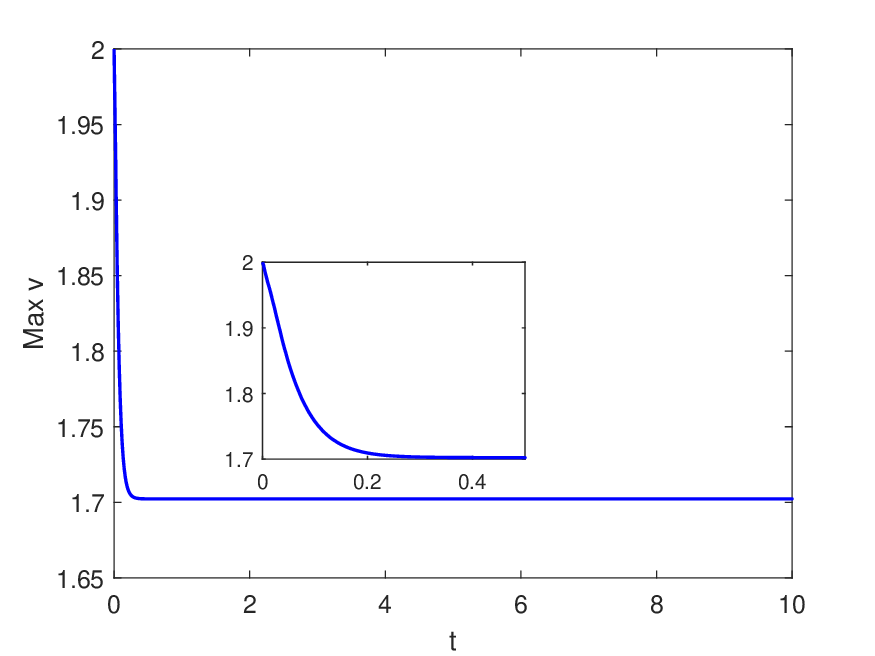}
	\includegraphics[width=0.32\linewidth]{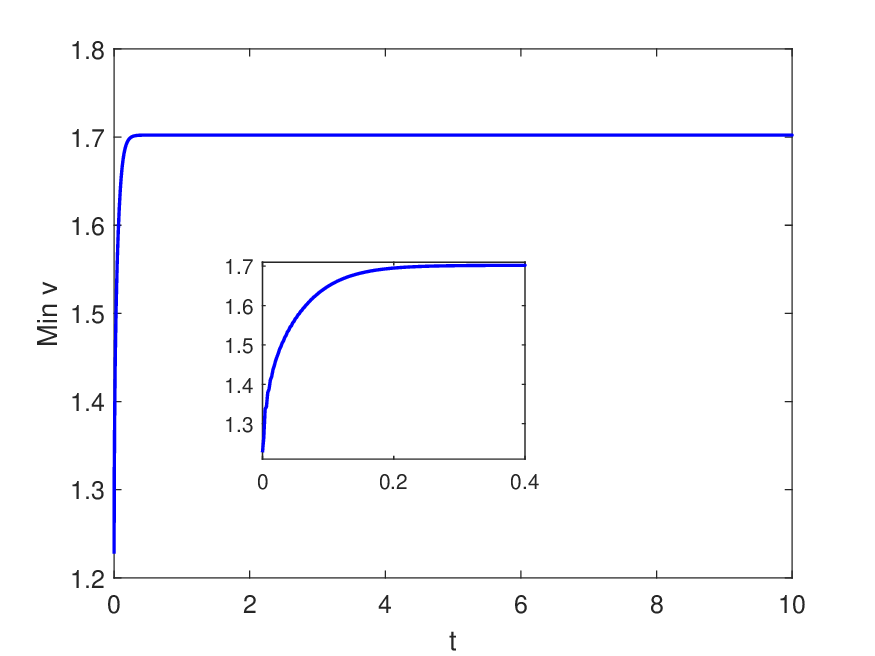}
	\includegraphics[width=0.32\linewidth]{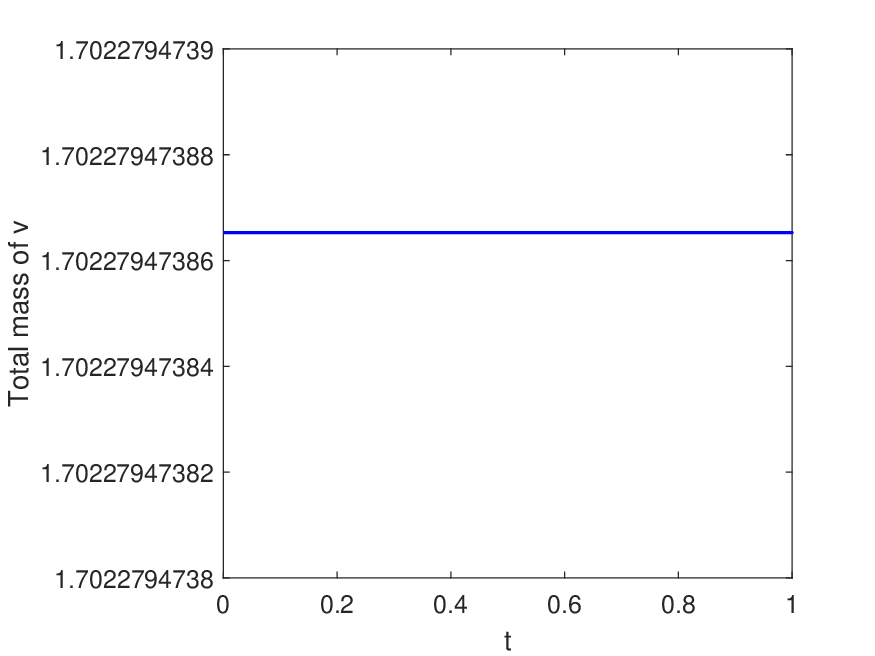}
	\includegraphics[width=0.32\linewidth]{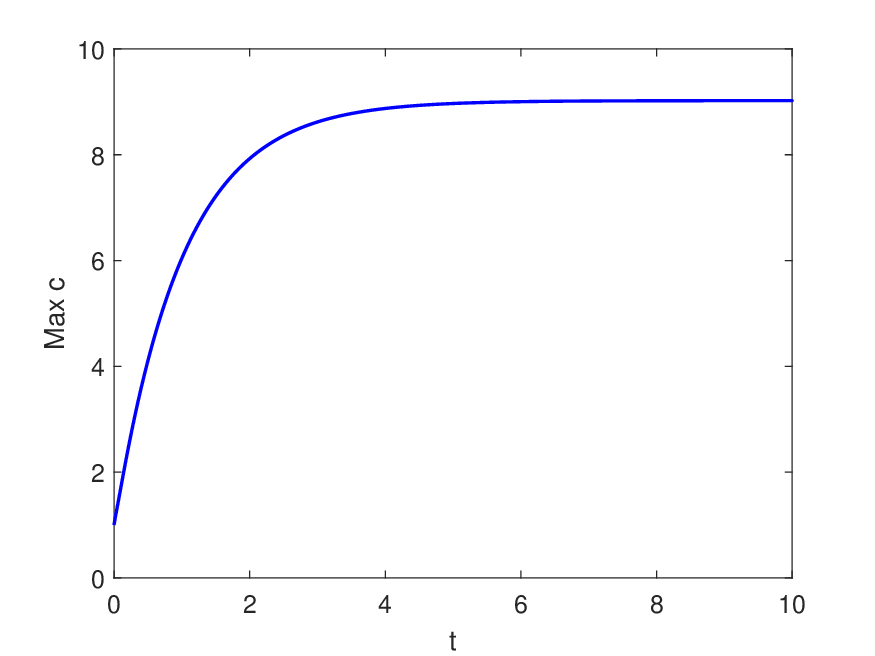}
	\includegraphics[width=0.32\linewidth]{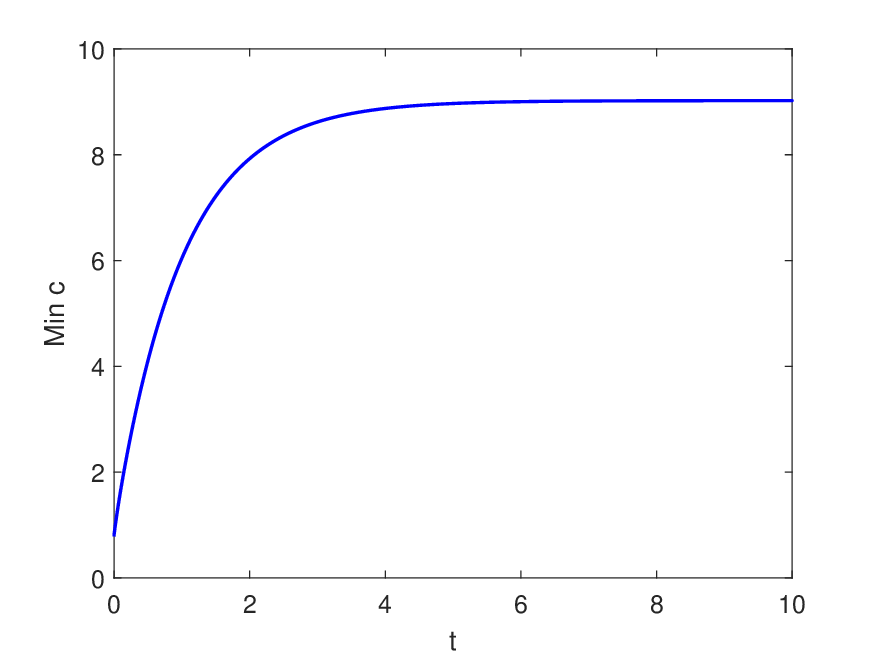}
	\includegraphics[width=0.32\linewidth]{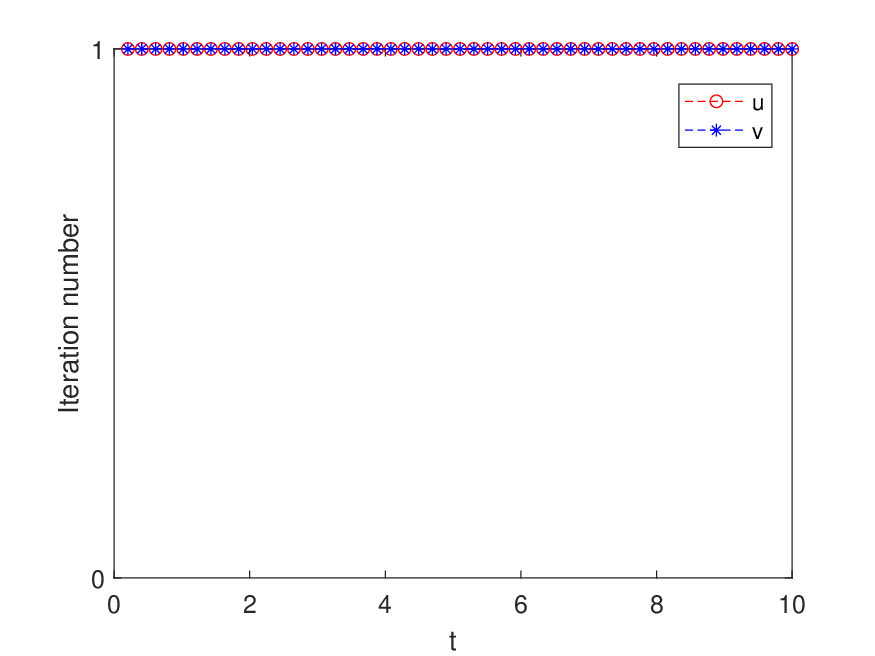}
	\caption{Evolution of the extrema of $u$, $v$, $c$, and the total mass and iteration numbers for $u$ and $v$ ($\mu=0.1$) for Example \ref{exam:less8pi_1}. }
	\label{fig:eg2:mass:non}
\end{figure}
We use this example to verify the preservation of positivity and mass conservation properties of the PP-MC-PBCFD scheme \eqref{model:KS:schemeI0}--\eqref{model:KS:shemeI3} under both uniform and non‑uniform spatio-temporal grids. Meanwhile, we also test the original energy dissipativity law of the proposed scheme, using the discrete version of energy \eqref{def:Energy} defined as 
\begin{equation}\label{def:disEnergy}
\begin{aligned}
  &E_h[u_h^n,v_h^n,c_h^n]\\
  &\quad :=\big(u_h^n\log u_h^n + v_h^n\log v_h^n - u_h^n - v_h^n - u_h^n c_h^n - v_h^n c_h^n + \f{1}{2}(c_h^n)^2,\bm 1\big)_{\rm M} + \f{1}{2}\big( (\bm d c_h^n)^2,\bm 1\big)_{\rm TM}.
\end{aligned}
\end{equation}
In the following simulation, the computational domain is discretized using $M=80$ grid points in both the $x$- and $y$-directions, and the time stepsize in \eqref{eq:adap} is set to $\Delta t=2.0\times10^{-3}$. The simulation results on both uniform grids (i.e., $\mu=0$) and non-uniform grids (i.e., $\mu=0.1$) are summarized in Figs. \ref{fig:eg2:mass}--\ref{fig:eg2:mass:non}, which show the time evolution of the maximum and minimum values of $u$, $v$ and $c$, along with the total mass of $u$ and $v$, and the iteration numbers of semismooth Newton solver for computing the $L^2$ projection \eqref{model:KS:shemeI2}. We have the following observations: (i) the maximum values of both $u$ and $v$ initially decrease while the minimum values initially increase; thereafter, both quantities gradually approach their steady states; (ii) the densities $u$ and $v$ remain positivity, and their masses are always conserved to at least 10 significant figures; (iii) the positivity of the concentration is also preserved throughout the simulation; (iv) the semi-smooth Newton method for solving the mass-conservative multiplier $\xi$ and $\theta$ in \eqref{eq:Lagrange:nonlinear} converges in just one iteration per time step, demonstrating the  efficiency of the proposed $L^2$ projection process; and (v) as shown in Fig. \ref{fig:eg2:energy}, the original discrete energy \eqref{def:disEnergy} of the PP-MC-PBCFD scheme exhibits favorable dissipative behavior on both uniform (i.e., $\mu=0$) and non-uniform (i.e., $\mu=0.1$) grids, although a rigorous proof of such dissipative property is not established herein. To the best of our knowledge, comparable second-order \textit{linear} schemes
for the multi-species Keller--Segel model, together with rigorous positivity-preserving and energy-dissipation analysis, remain scarce in the literature. 
\begin{figure}[!t]
	\centering
	\includegraphics[width=0.45\linewidth]{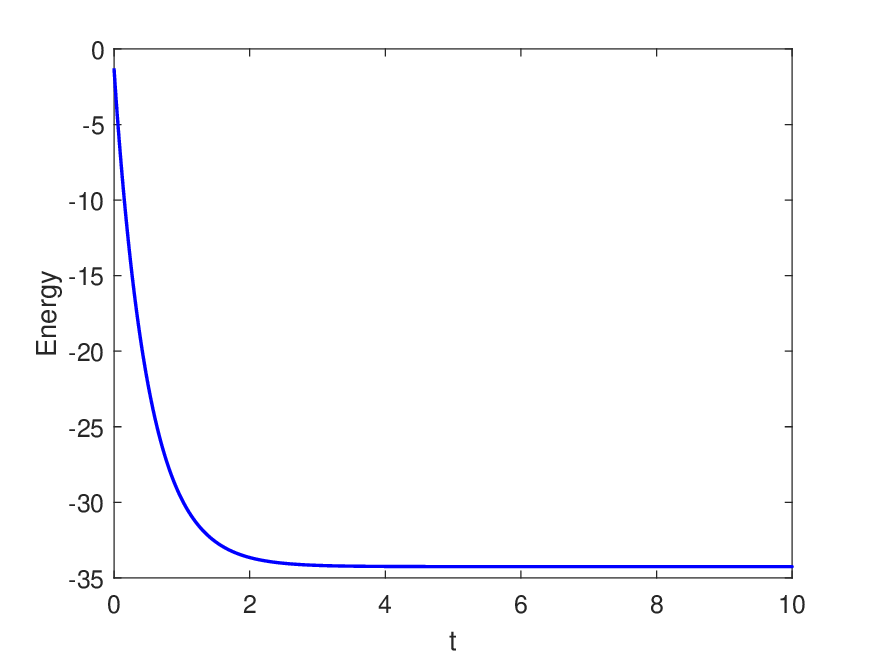}
    \includegraphics[width=0.45\linewidth]{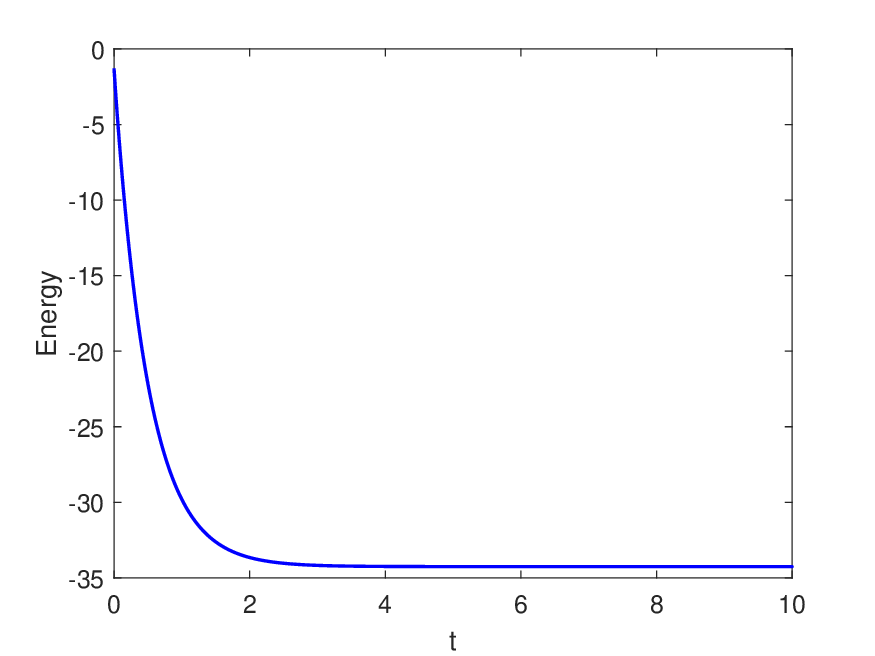}
	\caption{Evolution of the energy $\mu=0$ (left) and $\mu=0.1$ (right) for Example \ref{exam:less8pi_1}.}	\label{fig:eg2:energy}
    \vspace{-5pt}
\end{figure}

\begin{example}[Blow up of 2D two-species model]\label{exam:blow_up} In this example, we modify the initial conditions of Example \ref{exam:less8pi_1} to
\begin{equation*}
	\begin{aligned}
		u^0(\bm{x})&=1000 \exp\left( -100 ((x-0.5)^2+(y-0.5)^2 )\right) , \\
		v^0(\bm{x})&=500 \exp\left( -100 ((x-0.5)^2+(y-0.5)^2)\right) , \\
		c^0(\bm{x})&=0.
    \end{aligned}
\end{equation*}
Under this scenario, the solutions of the 2D two-species Keller–Segel chemotaxis model \eqref{model:KS} are expected to blow up in a finite time, as the total initial mass satisfies $M[u^0]+M[v^0]\approx 47.07>8\pi$. Nevertheless, as long as the solution exists prior to the blow-up time, both positivity and mass conservation properties shall be preserved.  
\end{example}

To better simulate the blow-up phenomenon, we adopt the following time-adaptive strategy:
\begin{equation}\label{tau_adap}
	\tau_n := \min\bigg\{\max \bigg\{\frac{\tau_{max}}{\sqrt{1+\zeta\max\{(\|D_{\tau}u_h^{n-1}\|_{\infty},\|D_{\tau}v_h^{n-1}\|_{\infty})\}}},\tau_{min}\bigg\},\varsigma\tau_{n-1}\bigg\},
\end{equation}
where $\tau_{max}$ and $\tau_{min}$ denote the maximum and minimum  allowable time stepsizes,  respectively, and $\zeta$ and $\varsigma$ are two positive tunable parameters used to adjust the stepsize of the next time level. In addition, for this simulation, we adopt the following  specially designed non-uniform spatial grids, which concentrate significantly more grid nodes near the blow-up point $(0.5,0.5)$ \cite{XF'25}:
\begin{equation}\label{grid:mid}
\begin{cases}
	x_{N_{x}/2+i+1/2}=\f{1}{2} + \frac{i^2}{2\left(N_x / 2+1\right)^2}, & i=0,1, \ldots, N_x / 2+1, \\ 
    x_{N_{x}/2-i+1/2}=\f{1}{2} -\frac{i^2}{2\left(N_x / 2+1\right)^2}, & i=1,2, \ldots, N_x / 2+1,
\end{cases}
\end{equation}
with the $y_{j+1/2}~(j=0,\ldots, N_y)$ grid points defined in an analogous manner.

\begin{figure}[!ht]
	\centering
	\includegraphics[width=0.32\linewidth]{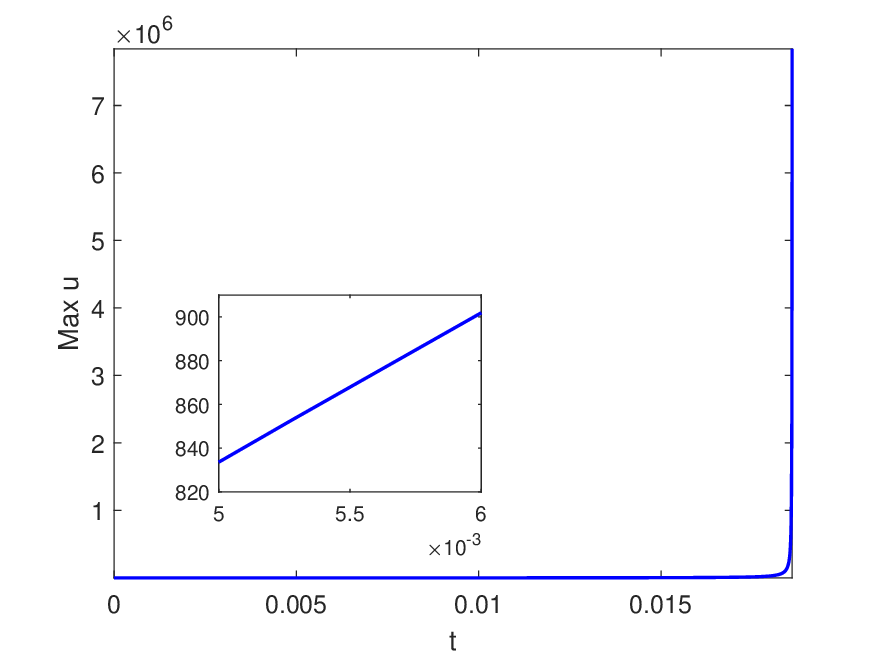}
	\includegraphics[width=0.32\linewidth]{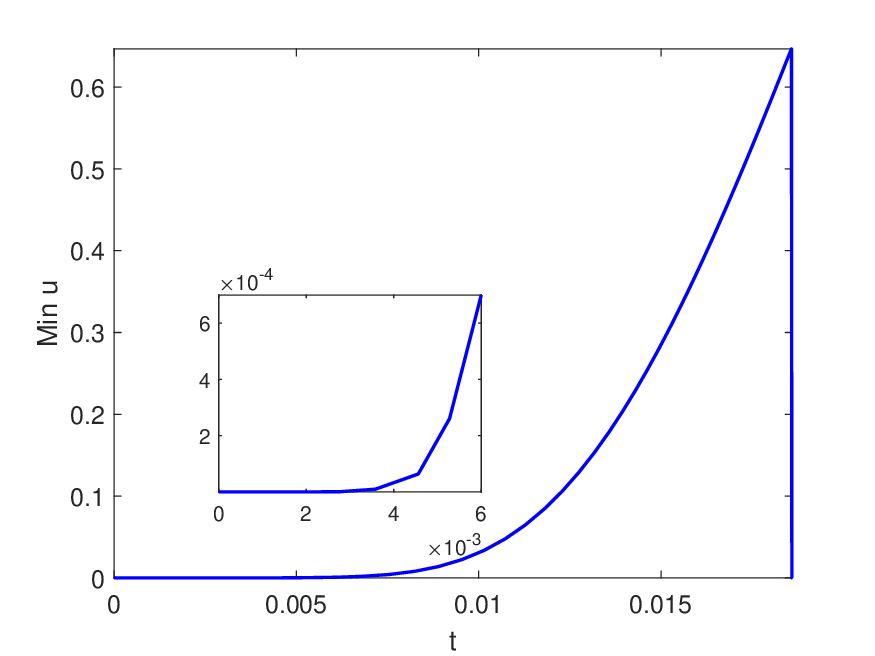}
	\includegraphics[width=0.32\linewidth]{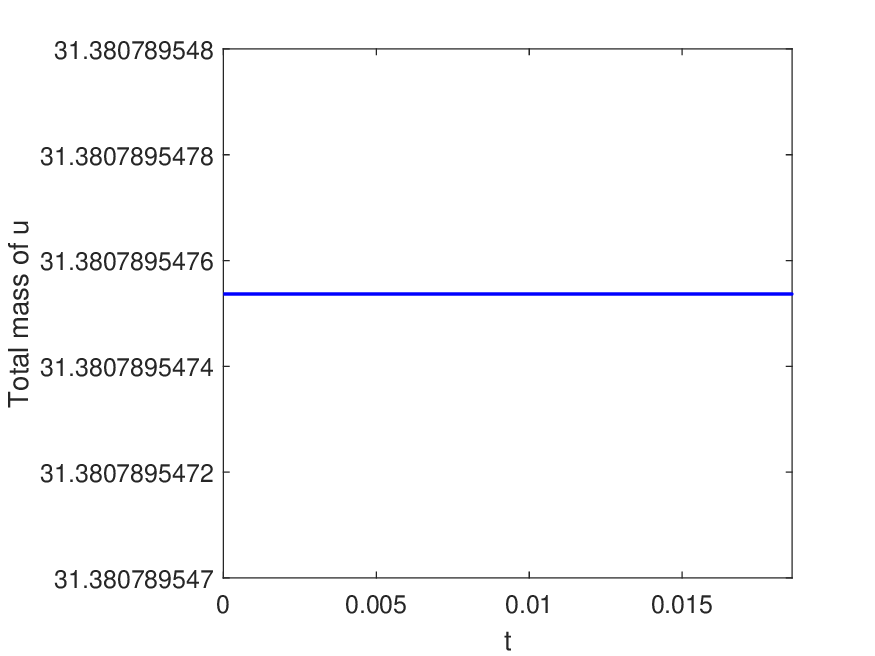}
	\includegraphics[width=0.32\linewidth]{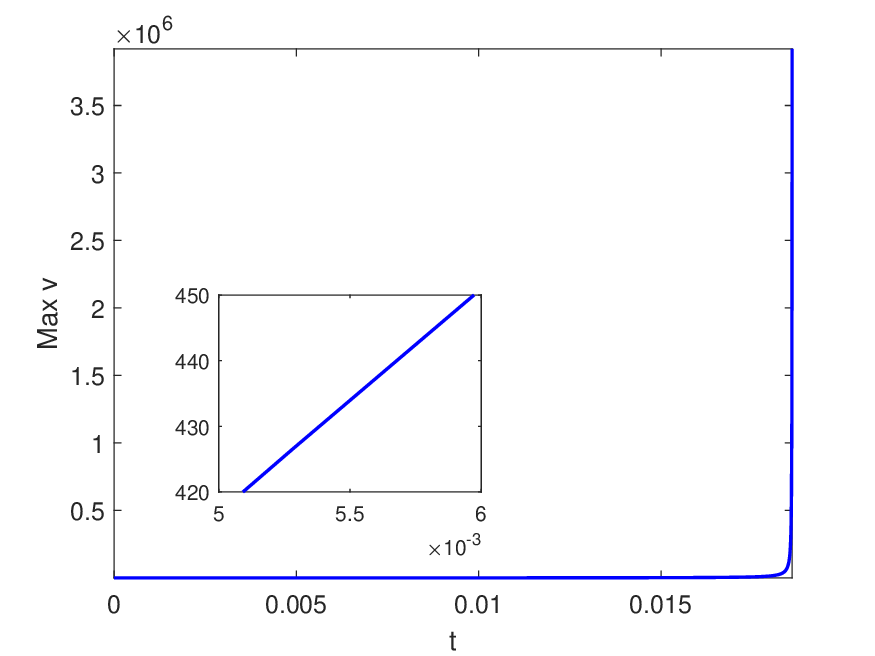}
	\includegraphics[width=0.32\linewidth]{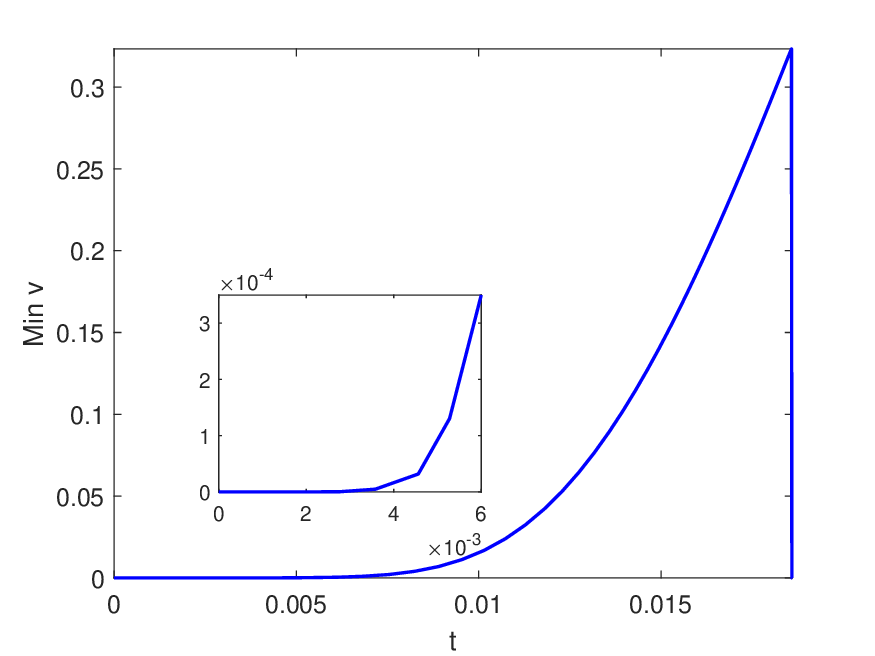}
	\includegraphics[width=0.32\linewidth]{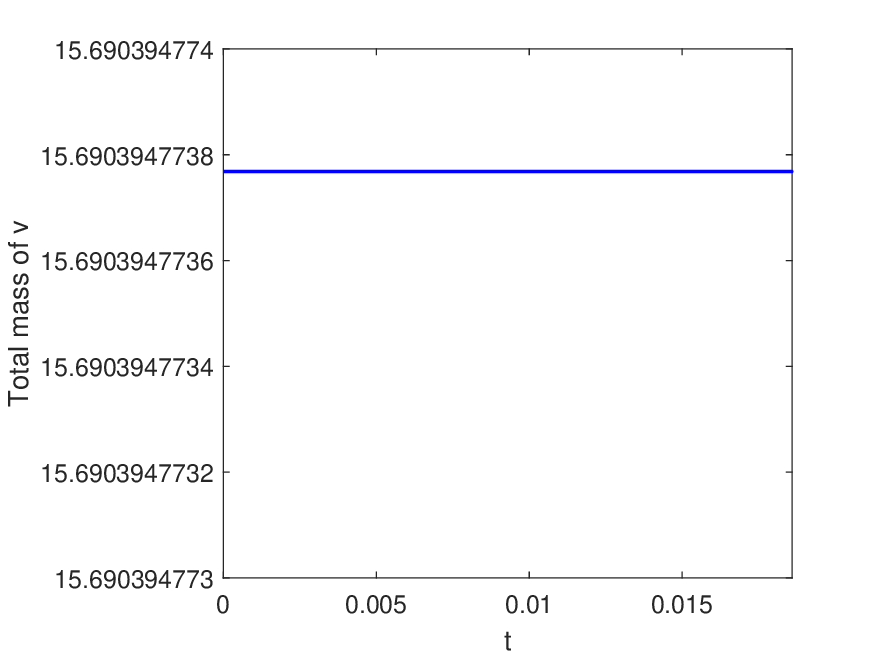}
	\includegraphics[width=0.32\linewidth]{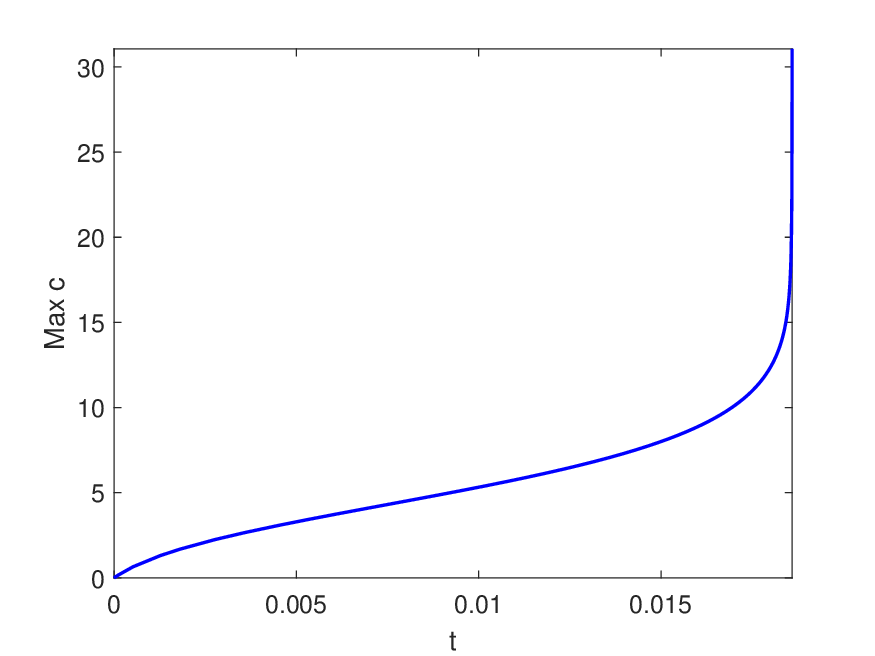}
	\includegraphics[width=0.32\linewidth]{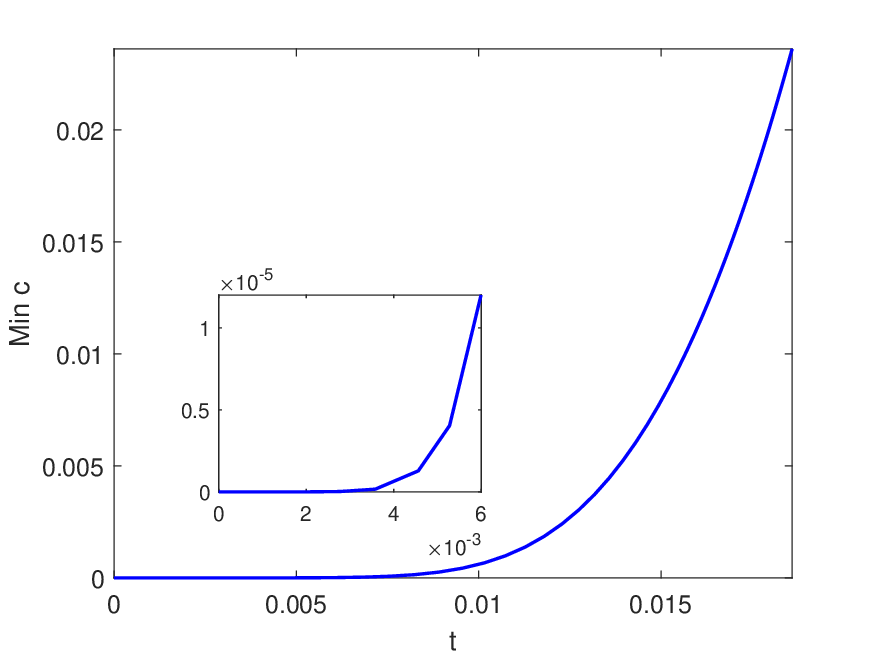}
	\includegraphics[width=0.32\linewidth]{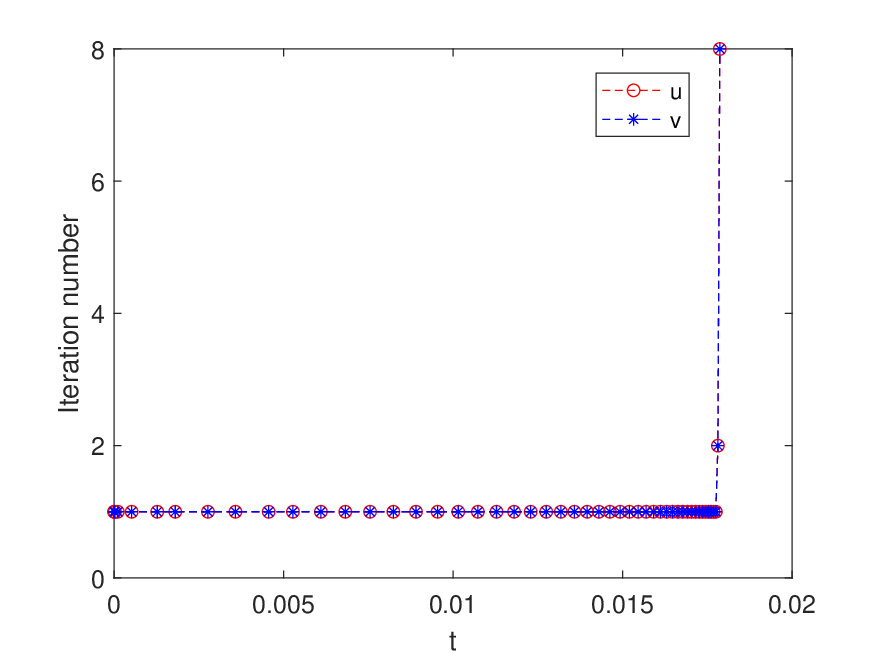}
	\caption{Evolution of the extrema of $u$, $v$, $c$, and the total mass and iteration numbers for $u$ and $v$ for Example \ref{exam:blow_up}.}
	\label{fig:eg3:mass}
\end{figure}
\begin{figure}[!ht]
	\centering
    \subfigure[maximum of $u$ and $v$]
    {
    \includegraphics[width=0.313\linewidth]{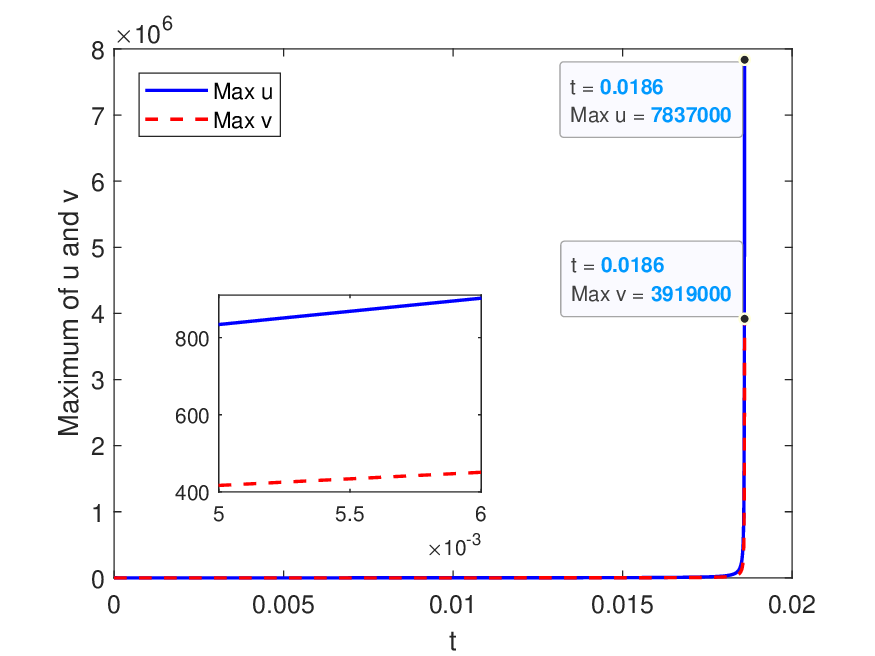}
    \label{fig:max_uv}
    }
    \subfigure[time stepsizes]
    {
    \includegraphics[width=0.313\linewidth]{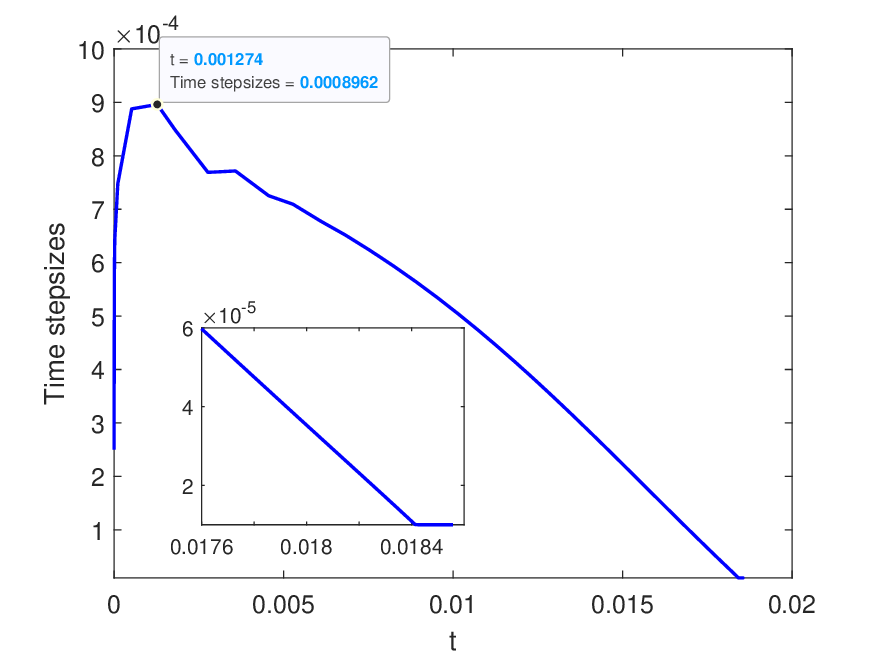}
    \label{fig:tau}
    }
    \subfigure[energy]
    {
    \includegraphics[width=0.313\linewidth]{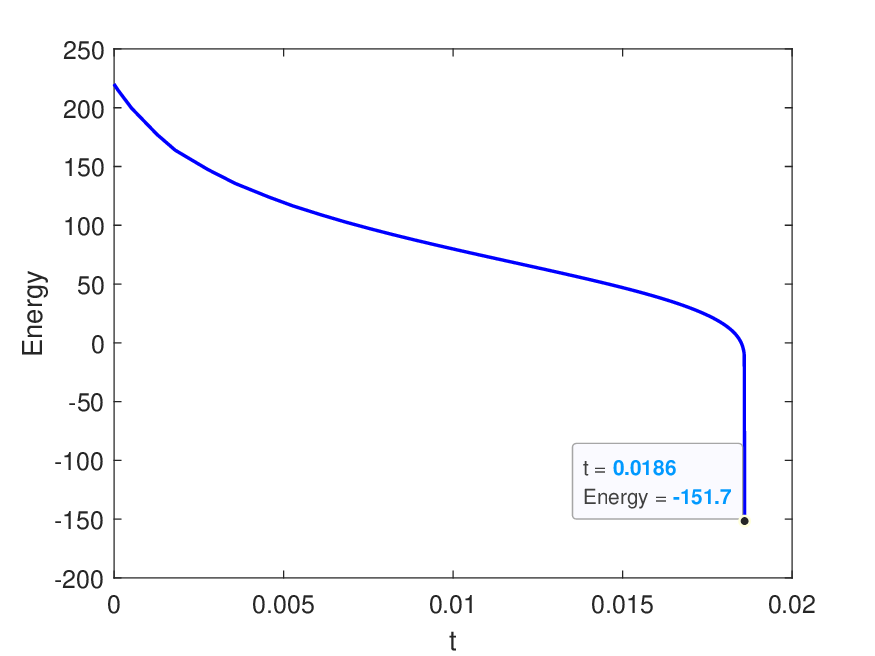}
    \label{fig:energy_blowup}
    }
	\caption{Evolution of the maximum values of $u$, $v$, the time stepsizes, and energy for Example \ref{exam:blow_up}.}
\end{figure}
In this test, we set $M=80$ grid points in each spatial direction and adopt the grid partitions \eqref{grid:mid} along both the $x$- and $y$-directions. The adaptive time-stepping strategy \eqref{tau_adap} is employed with parameters $\tau_{max}=1.0\times10^{-3}$, $\tau_{min}=1.0\times10^{-5}$, $\zeta = 1.0\times10^{-5}$ and $\varsigma=5$. The simulation results including the time evolution of the extrema of $u$, $v$ and $c$, the total mass of $u$ and $v$, and the iteration number of the semismooth Newton solver used in the $L^2$ projection \eqref{model:KS:shemeI2}, as well as the time evolution of the discrete energy are displayed in Figs. \ref{fig:eg3:mass} and \ref{fig:energy_blowup}, respectively. These results lead to conclusions similar to those drawn in Example \ref{exam:less8pi_1}. In addition, the evolution of the adaptive time stepsize is also illustrated in Fig. \ref{fig:tau}. As can be observed, the time stepsize initially increases and then gradually decreases until it reaches the minimum value. In fact, it is precisely at this moment that the blow-up phenomenon occurs (see Fig. \ref{fig:max_uv}), and the minimum time stepsize $\tau_{min}=1.0\times10^{-5}$ is thus required to adequately  capture such blow-up phenomenon. This behavior is fully consistent with the expected physical scenario. Furthermore, the time-adaptive strategy \eqref{tau_adap} simulates the blow-up phenomenon in just $86$ time steps. In contrast, a uniformly small stepsize $\tau_{min} = 1.0\times10^{-5}$ requires $1,860$ time steps. As a result, the adaptive approach is approximately 22 times more efficient.  Finally, we also present in Fig. \ref{fig:blow_up} the time evolution of the cell densities $u$, $v$, and the chemoattractant concentration $c$. The results clearly show that both $u$ and $v$ undergo blow-up at $t = 1.86\times10^{-2}$, which is in full agreement with the underlying physical mechanism.
\begin{figure}[!htbp]
	\centering
	\includegraphics[width=0.24\linewidth]{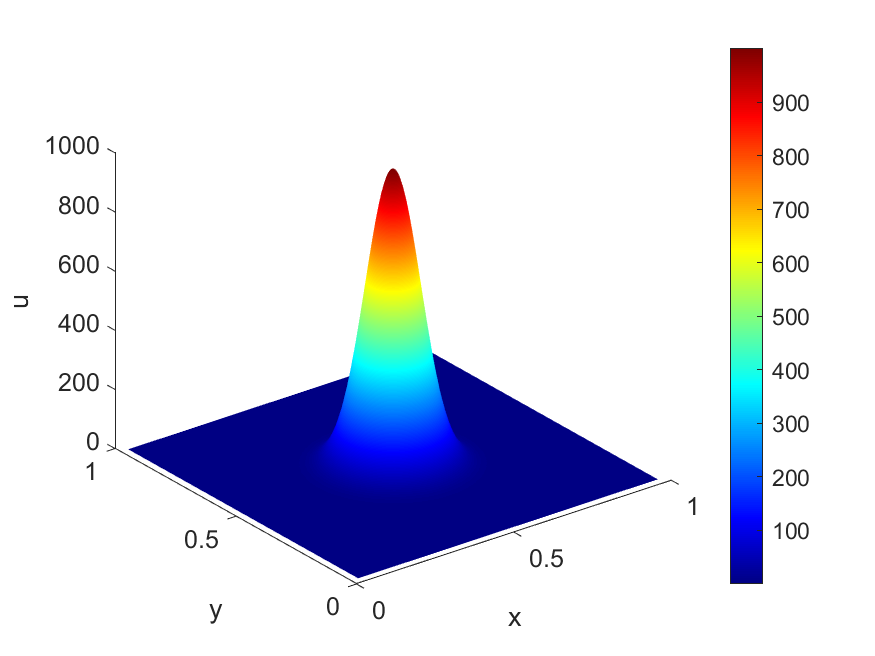}
	\includegraphics[width=0.24\linewidth]{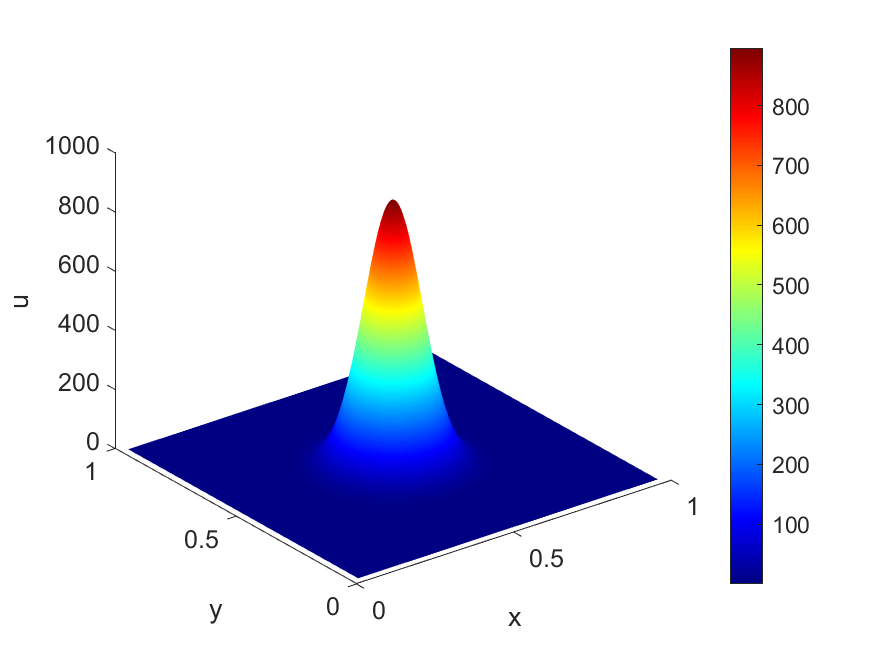}
	\includegraphics[width=0.24\linewidth]{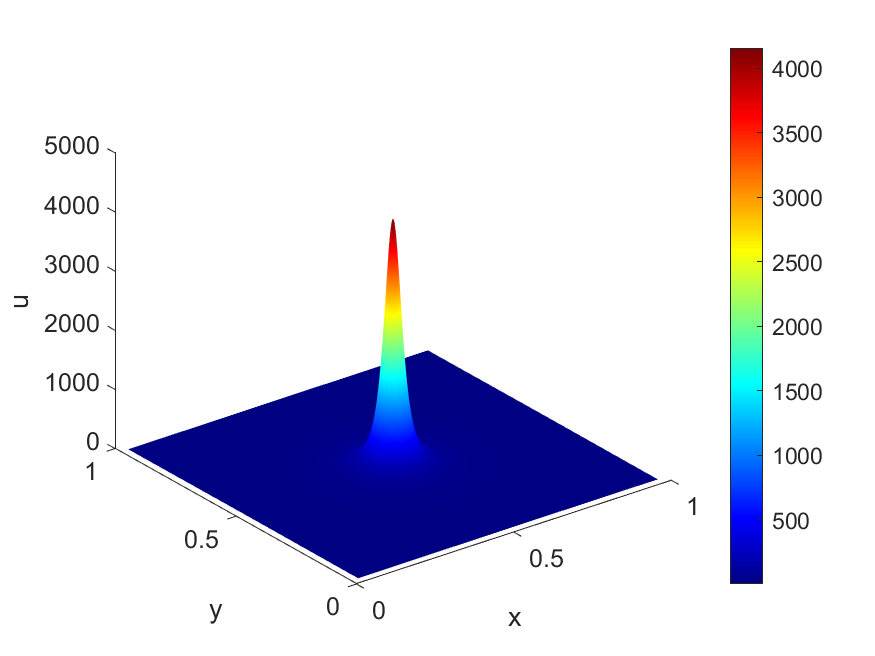}
	\includegraphics[width=0.24\linewidth]{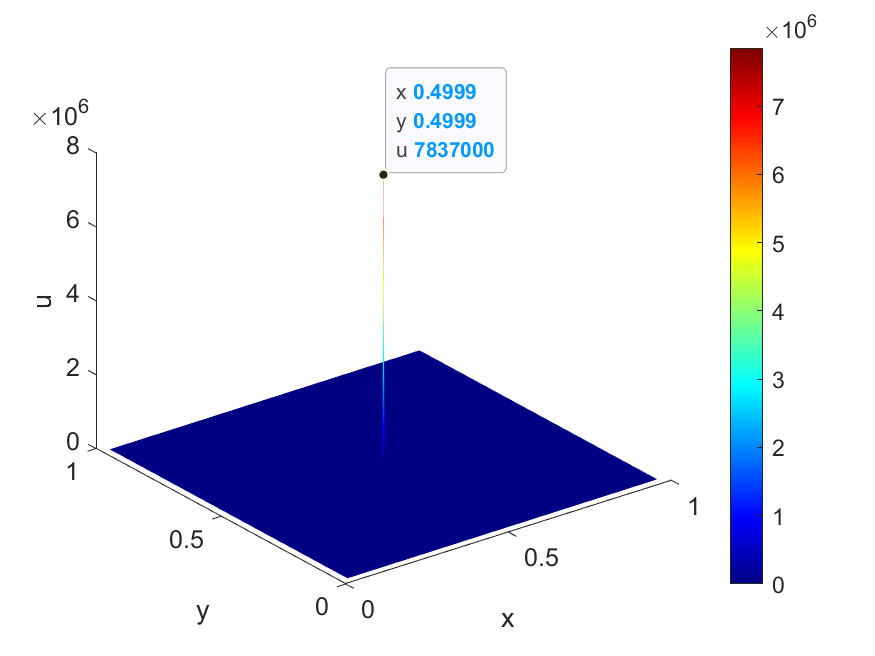}
	\includegraphics[width=0.24\linewidth]{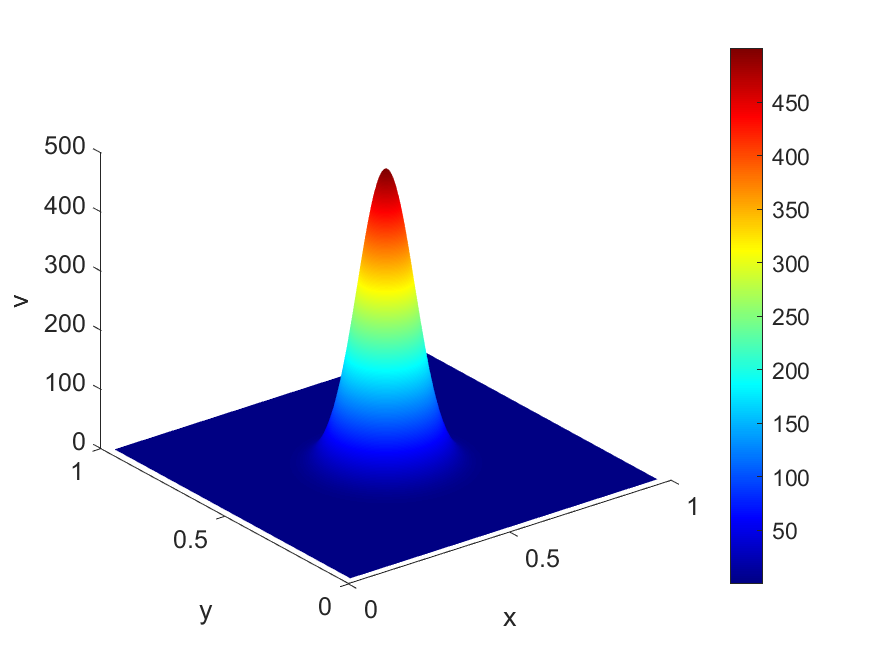}
	\includegraphics[width=0.24\linewidth]{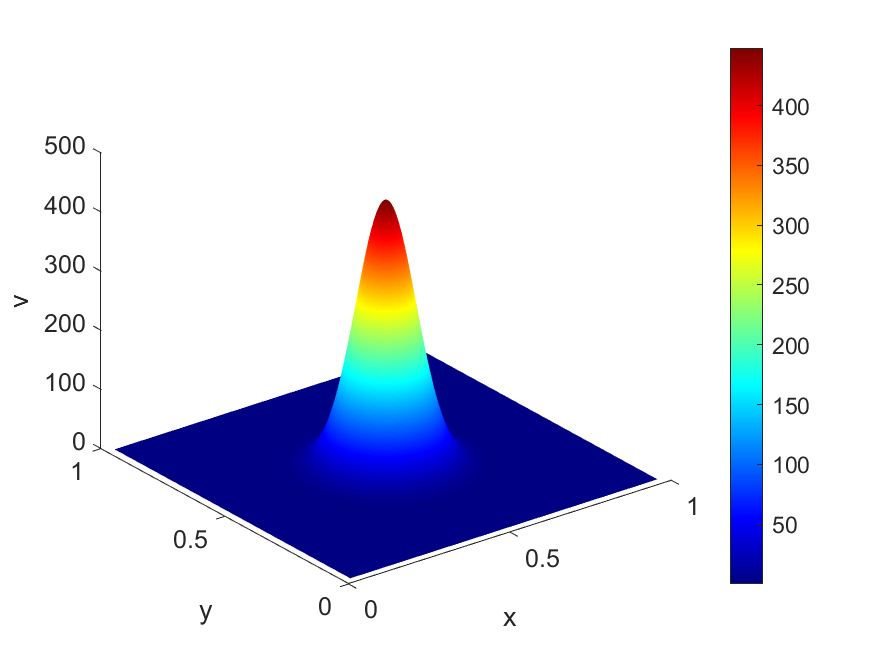}
	\includegraphics[width=0.24\linewidth]{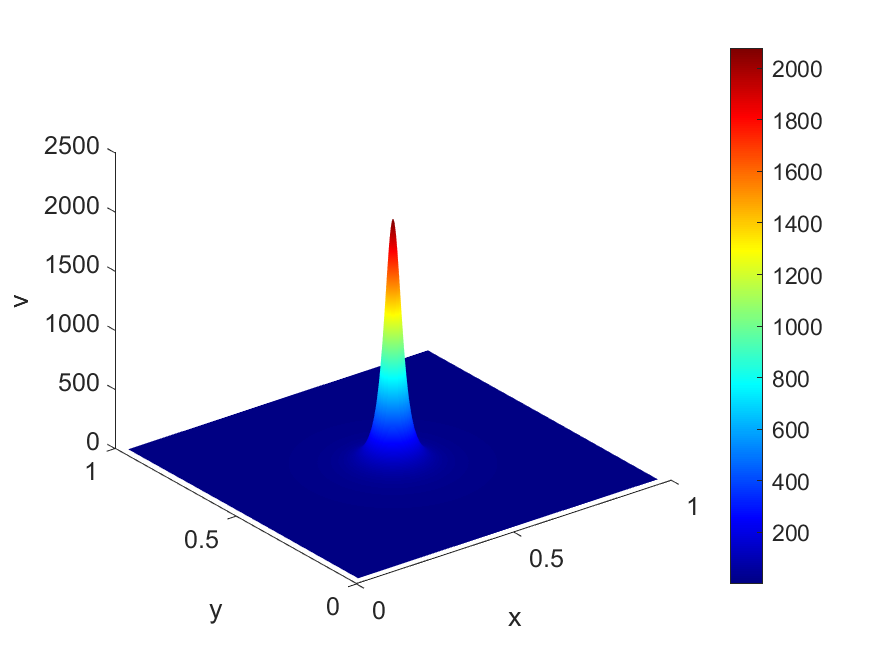}
	\includegraphics[width=0.24\linewidth]{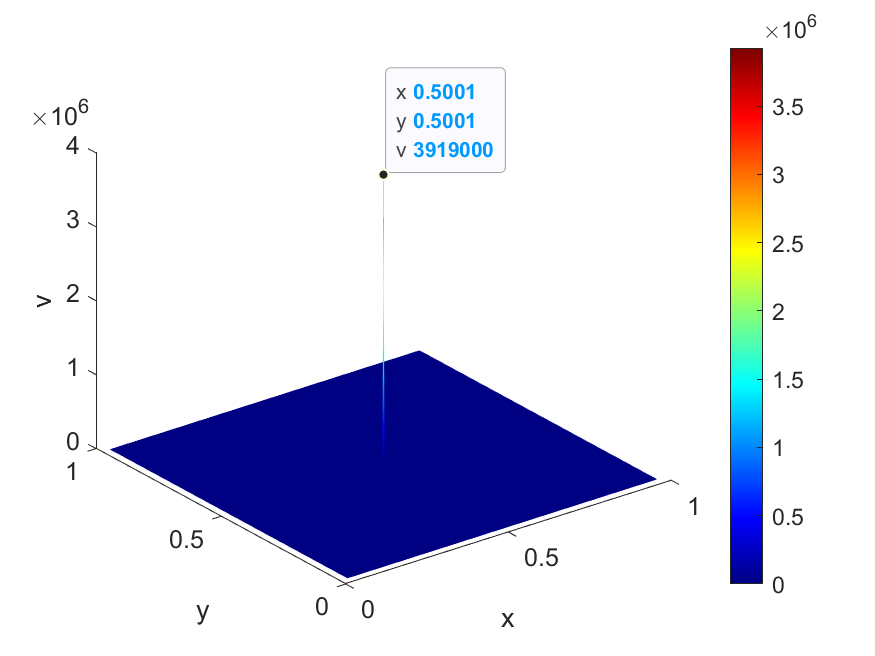}
	\includegraphics[width=0.24\linewidth]{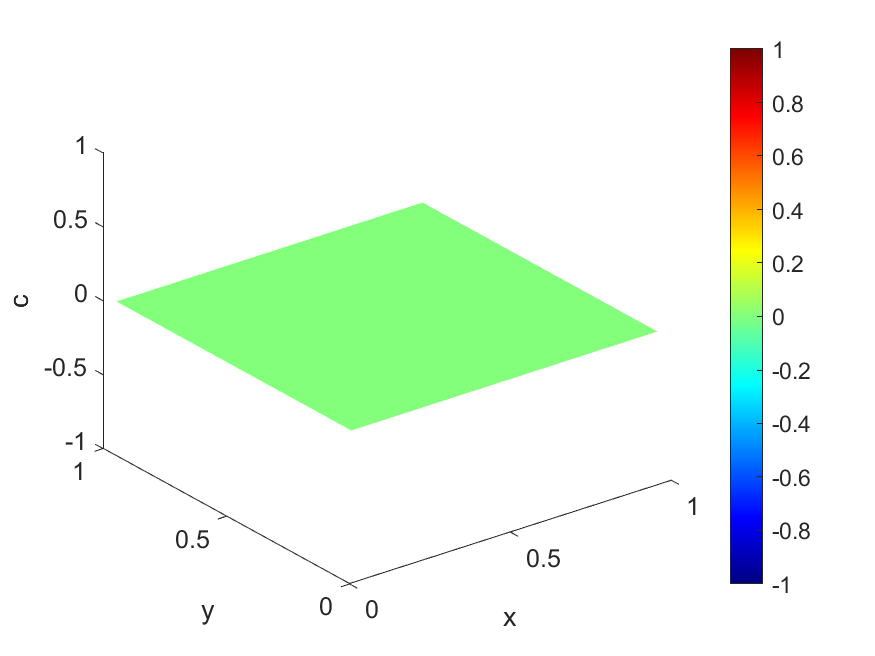}
	\includegraphics[width=0.24\linewidth]{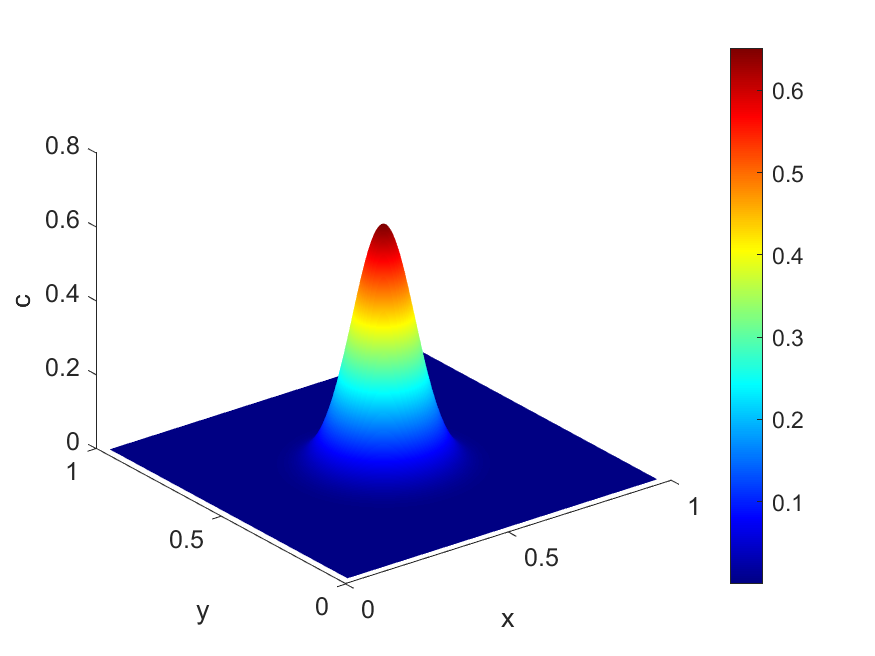}
	\includegraphics[width=0.24\linewidth]{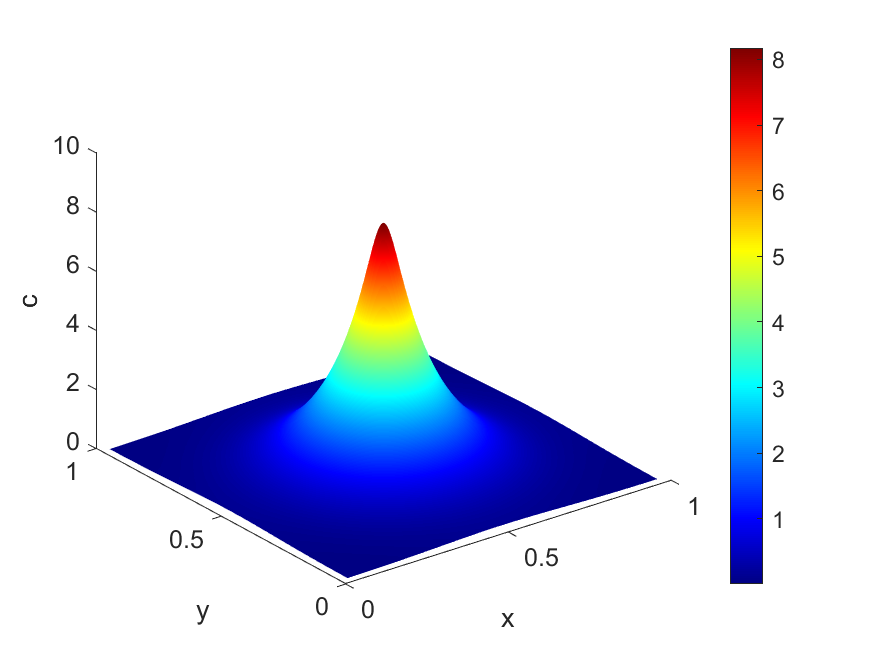}
	\includegraphics[width=0.24\linewidth]{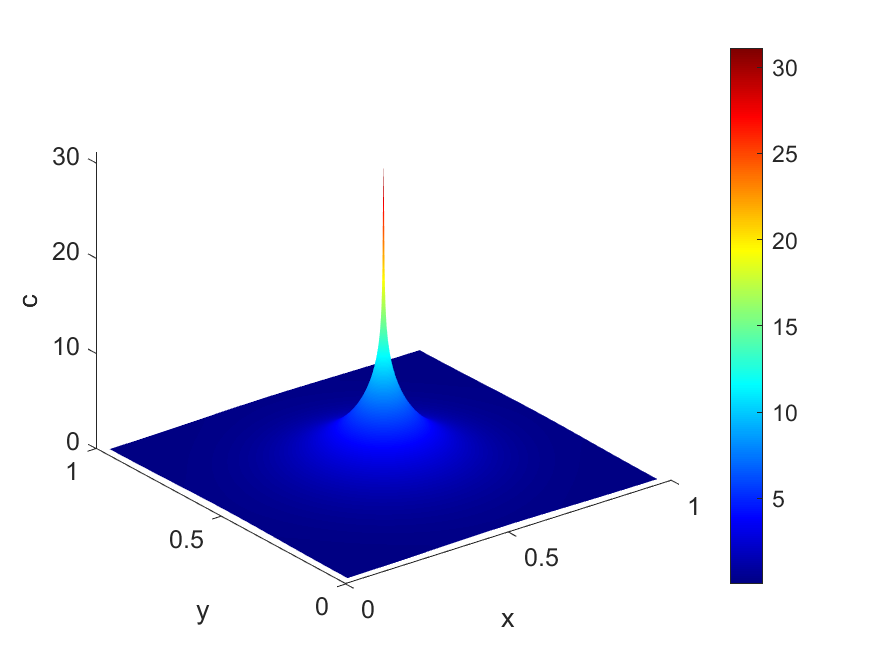}
	\caption{Contour plots of $u$, $v$, $c$ (from top to bottom) at time instants $t = 0,\ 1.27\times10^{-3},\ 1.57\times10^{-2},\ 1.86\times10^{-2}$ (from left to right)  for Example \ref{exam:blow_up}.}
	\label{fig:blow_up}
\end{figure}

\begin{example}[3D simulation]\label{exam:bu:3D} In the last example, we consider the 3D two-species Keller--Segel chemotaxis model \eqref{model:KS} in a cubic domain $\Omega=(0,1) ^3$. The initial conditions are prescribed as follows:
		\begin{equation*}
			\begin{aligned}
				 u^0(\bm{x})&=4 \exp\left( -100 ((x-0.5)^2+(y-0.5)^2+(z-0.5)^2 )\right) , \\
				 v^0(\bm{x})&=6 \exp\left( -50 ((x-0.5)^2+(y-0.5)^2+(z-0.5)^2 )\right) , \\
				 c^0(\bm{x})&=4 \exp\left( -20 ((x-0.5)^2+(y-0.5)^2+(z-0.5)^2 )\right)  .
			\end{aligned}
		\end{equation*}
\end{example}

In this test, we set $M=40$ grid points in each spatial direction and choose the time stepsize $\Delta t = 2.0\times10^{-3}$ as specified in \eqref{eq:adap}. Fig. \ref{fig:eg4:mass} presents the simulation results on uniform grids (i.e., $\mu=0$), including the time evolution of the extrema of $u$, $v$ and $c$, the total mass of $u$ and $v$, and the iteration number of the semismooth Newton solver used in the $L^2$ projection \eqref{model:KS:shemeI2}. Fig. \ref{fig:eg4:mass:non} displays the corresponding results on non-uniform grids with $\mu=0.1$. In addition, the time evolution of the original discrete energy on both uniform (i.e., $\mu=0$) and non-uniform (i.e., $\mu=0.1$) grids is shown in Fig. \ref{fig:eg4:energy}, which clearly illustrates its strict dissipative behavior. Furthermore, we also present slices at $x = 0.5$, $y = 0.5$ and $z = 0.5$ of the cell densities $u,v$ and the chemoattractant concentration $c$ at four different time instants $t = 0,\, 8.2 \times 10^{-3},\, 4.0 \times 10^{-2},\, 0.2$ in Fig. \ref{fig:3D}. Overall, the conclusions drawn here are largely consistent with those from the 2D Example \ref{exam:less8pi_1}.
\begin{figure}[!htbp]
 	\centering
 	\includegraphics[width=0.32\linewidth]{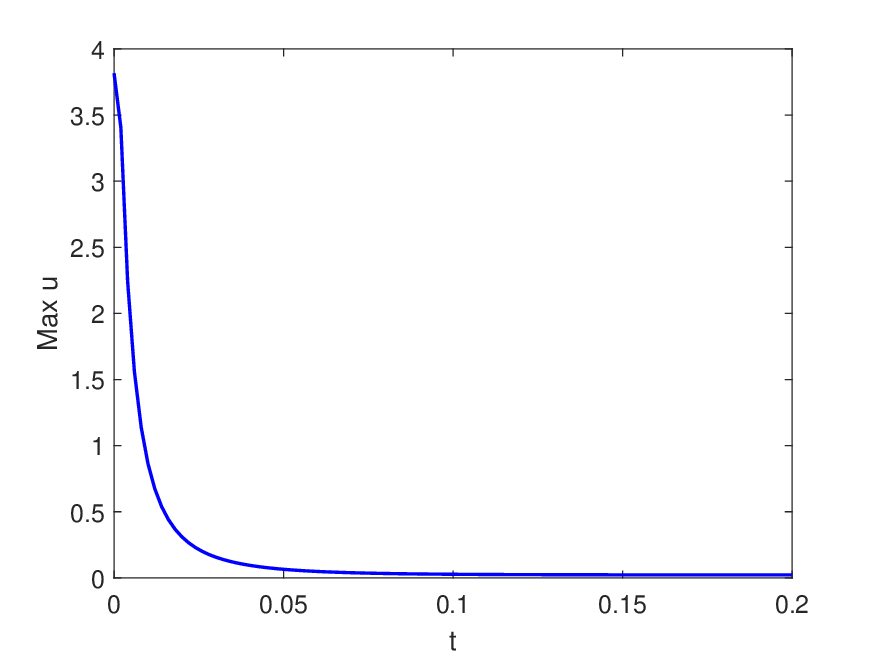}
 	\includegraphics[width=0.32\linewidth]{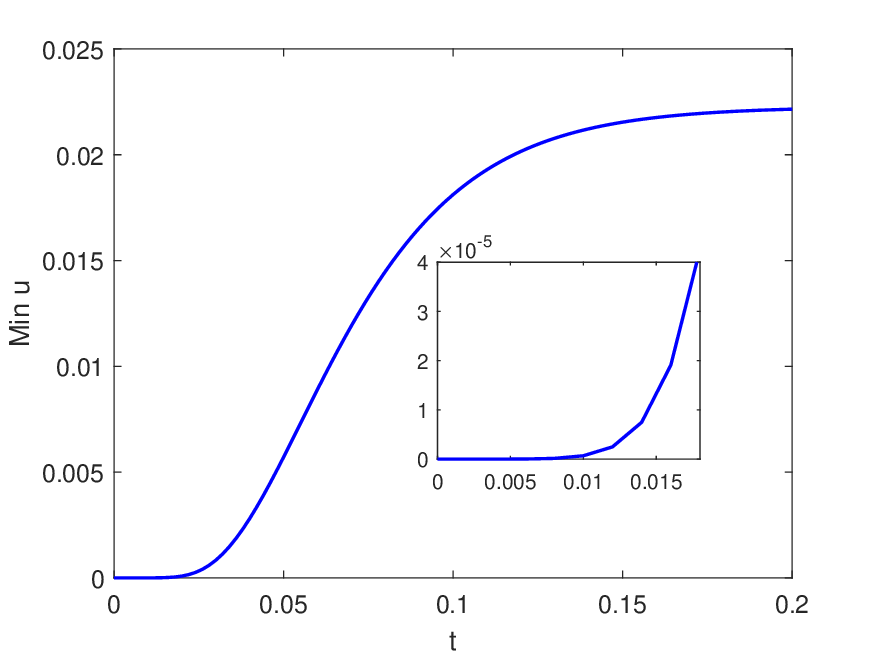}
 	\includegraphics[width=0.32\linewidth]{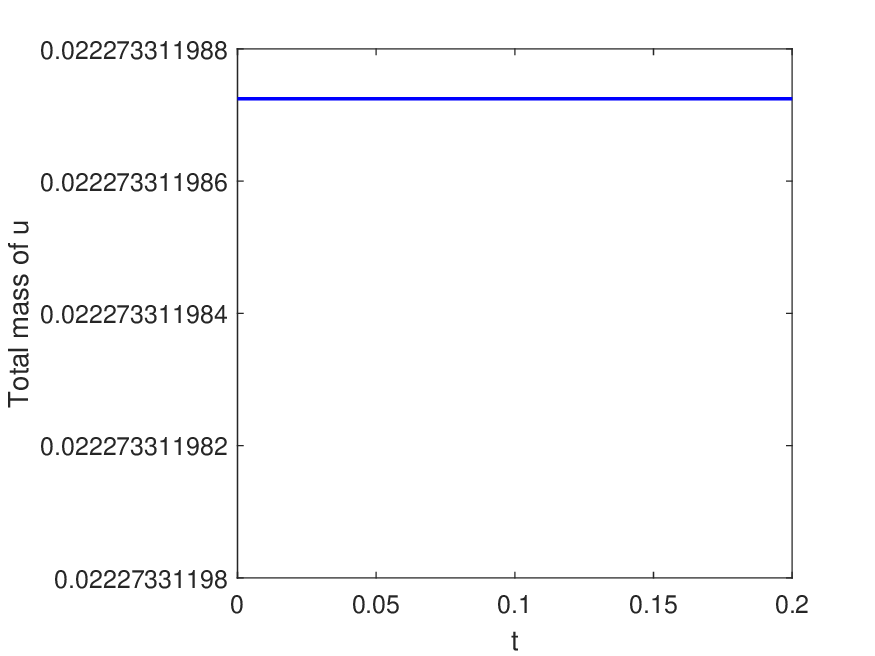}
 	\includegraphics[width=0.32\linewidth]{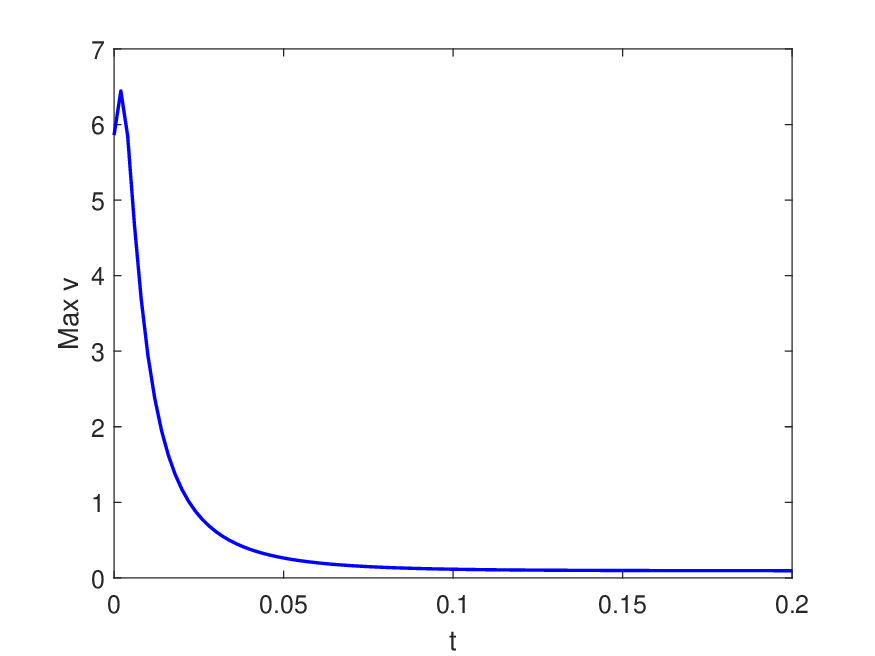}
 	\includegraphics[width=0.32\linewidth]{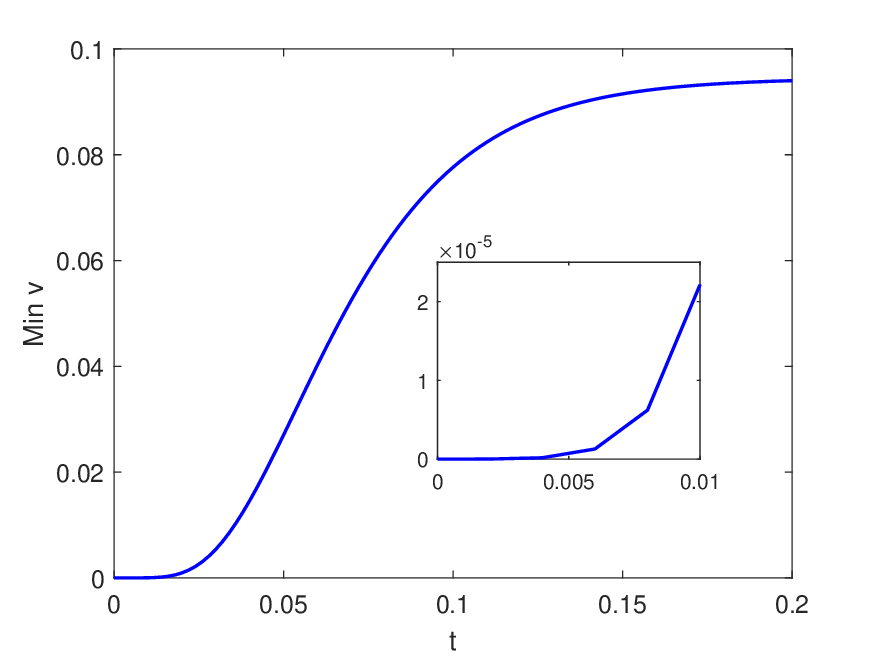}
 	\includegraphics[width=0.32\linewidth]{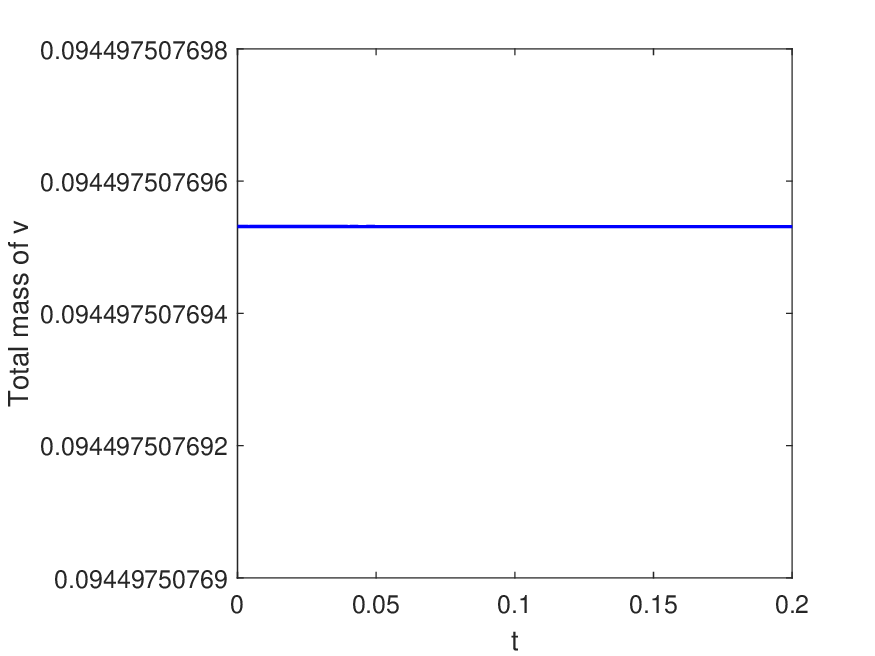}
 	\includegraphics[width=0.32\linewidth]{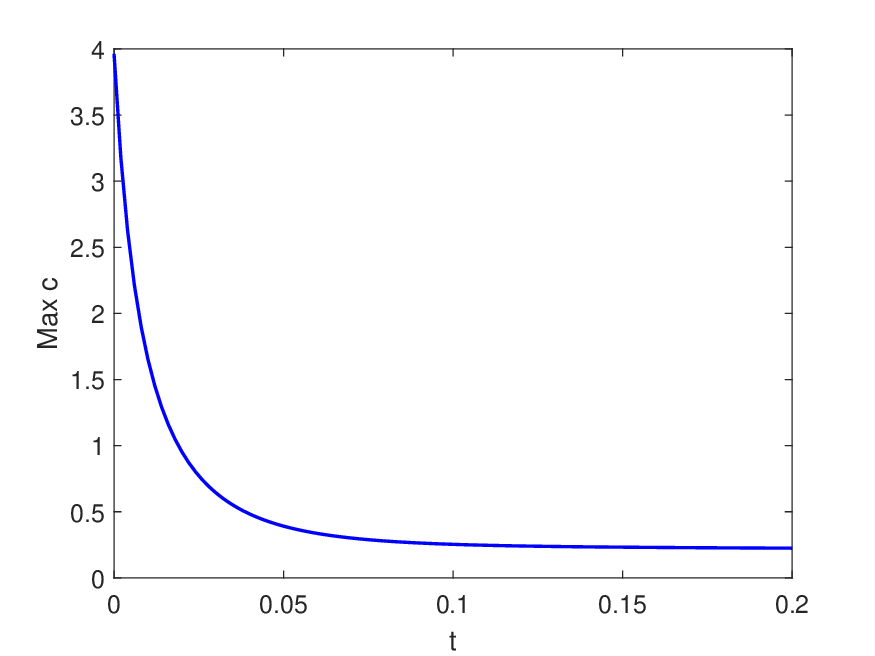}
 	\includegraphics[width=0.32\linewidth]{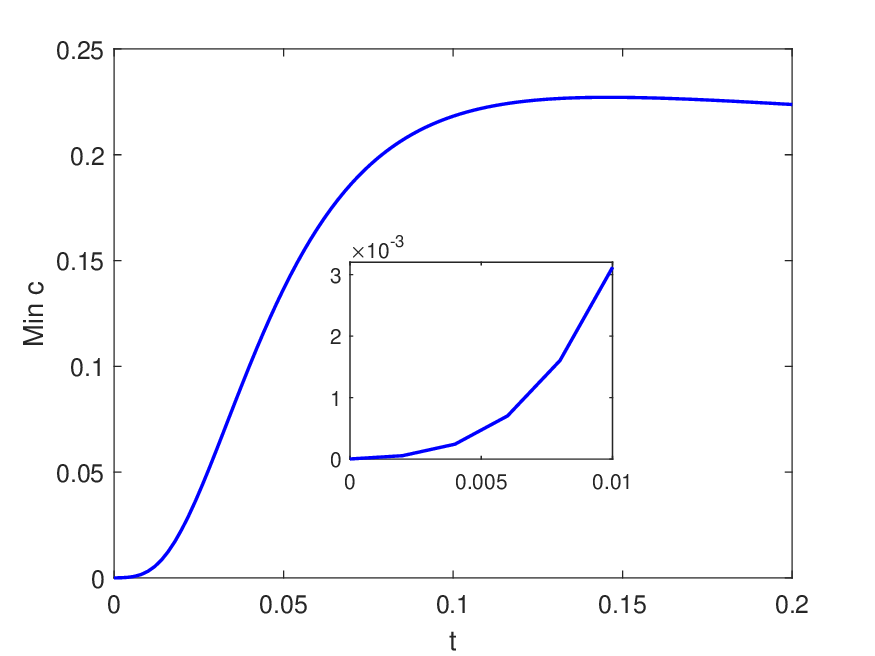}
 	\includegraphics[width=0.32\linewidth]{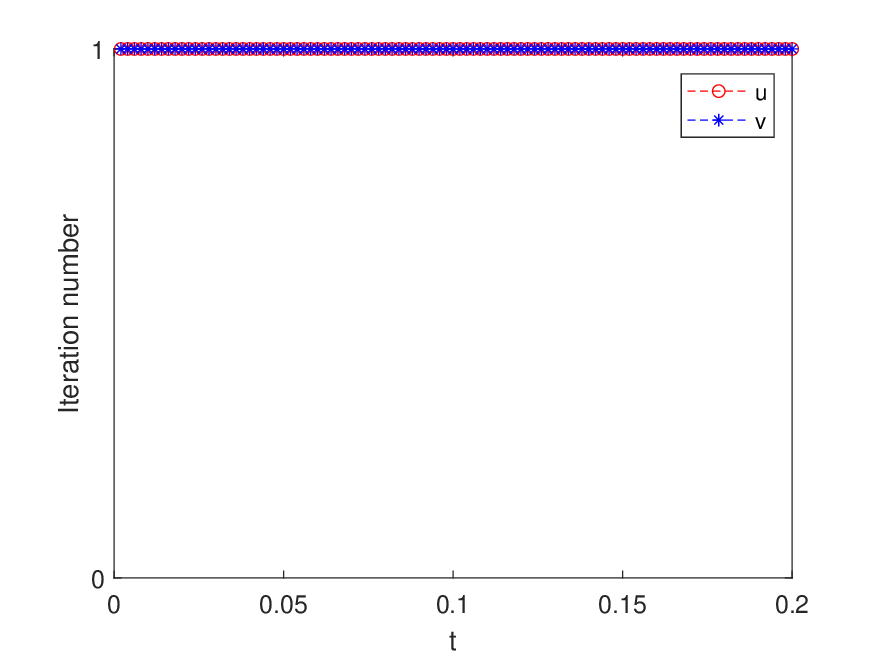}
 	\caption{Evolution of the extrema of $u$, $v$, $c$, and the total mass and iteration numbers for $u$ and $v$ ($\mu=0$) for Example \ref{exam:bu:3D}.}
 	\label{fig:eg4:mass}
 \end{figure}
 \begin{figure}[!h]
 	\centering
 	\includegraphics[width=0.32\linewidth]{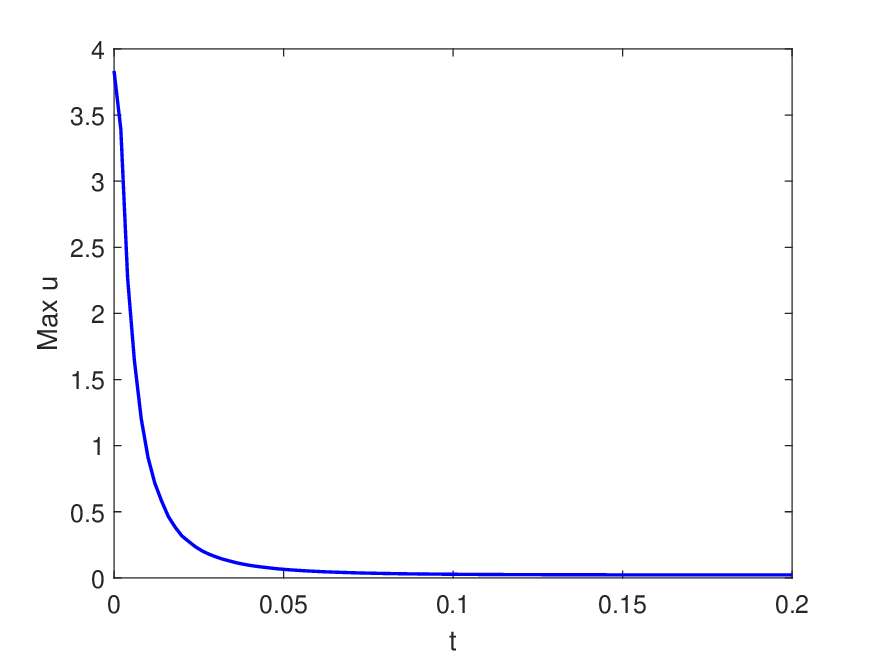}
 	\includegraphics[width=0.32\linewidth]{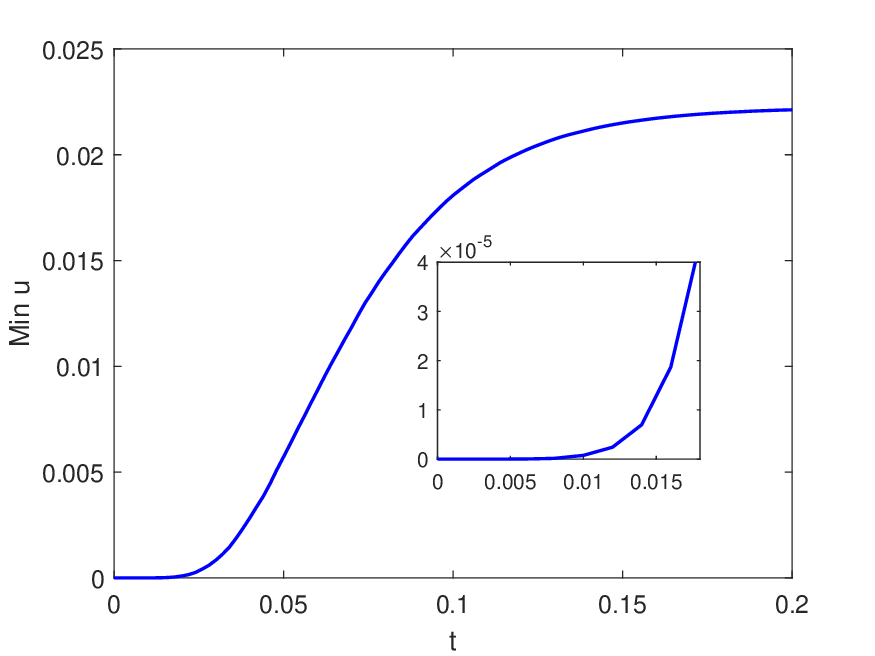}
 	\includegraphics[width=0.32\linewidth]{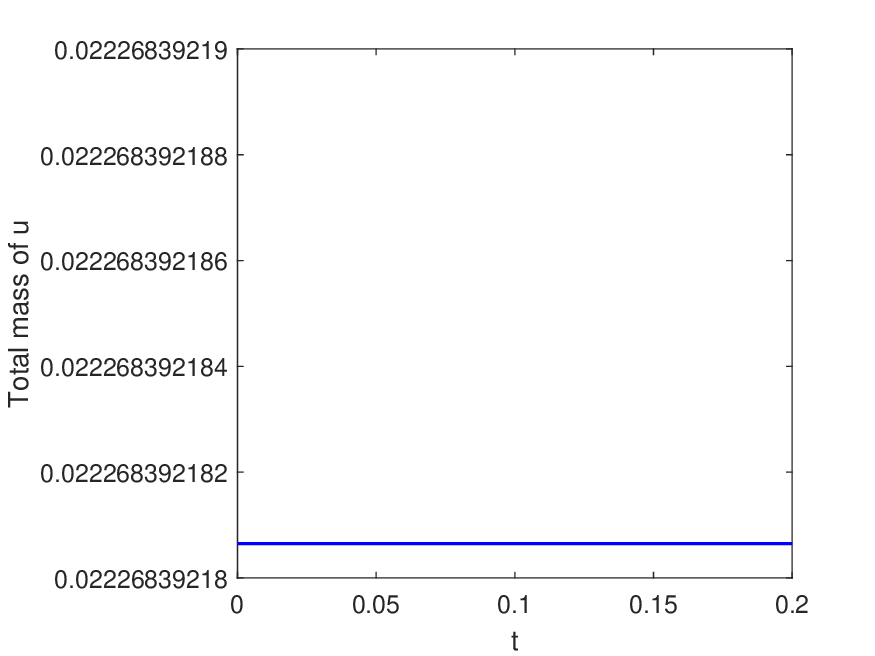}
 	\includegraphics[width=0.32\linewidth]{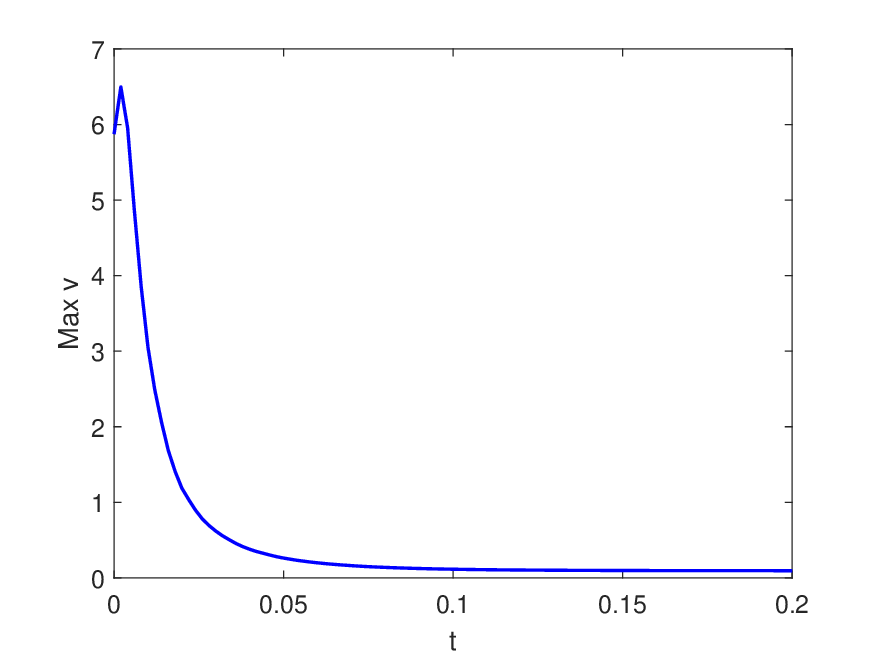}
 	\includegraphics[width=0.32\linewidth]{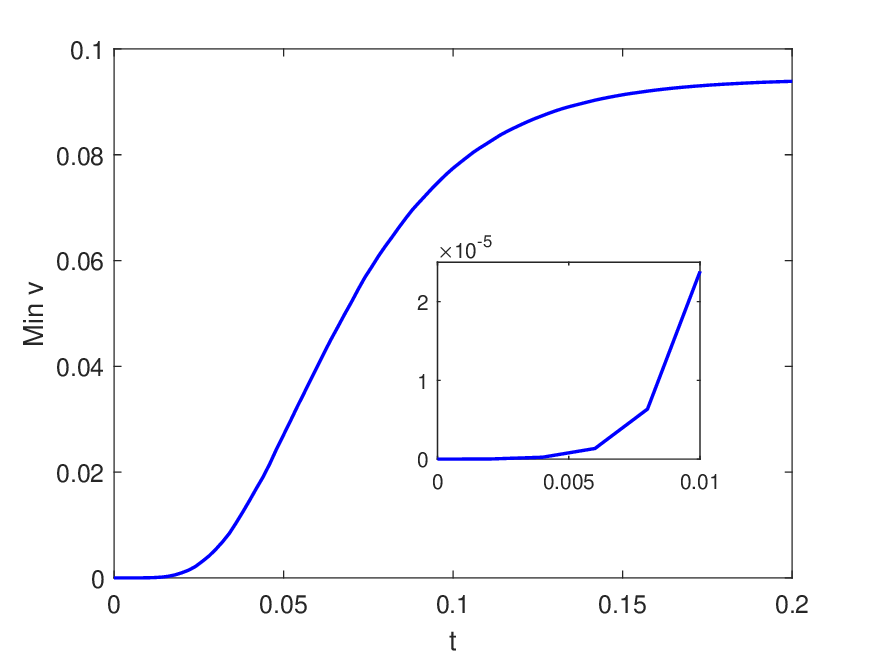}
 	\includegraphics[width=0.32\linewidth]{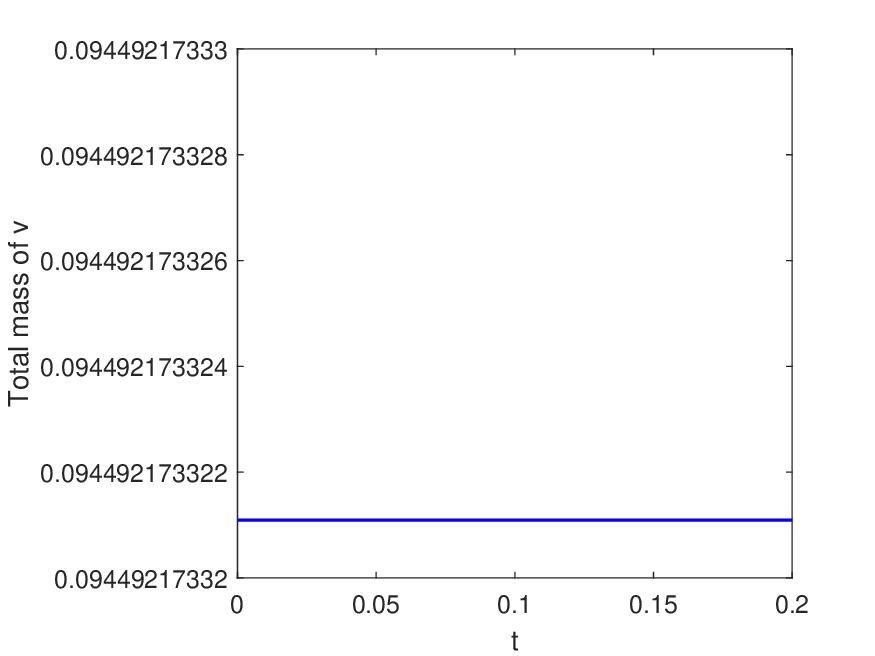}
 	\includegraphics[width=0.32\linewidth]{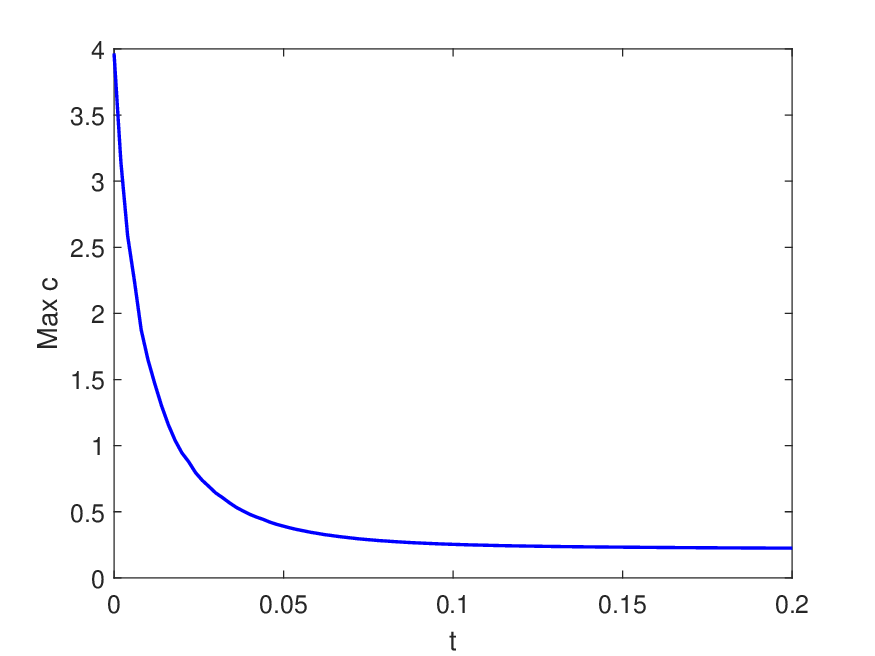}
 	\includegraphics[width=0.32\linewidth]{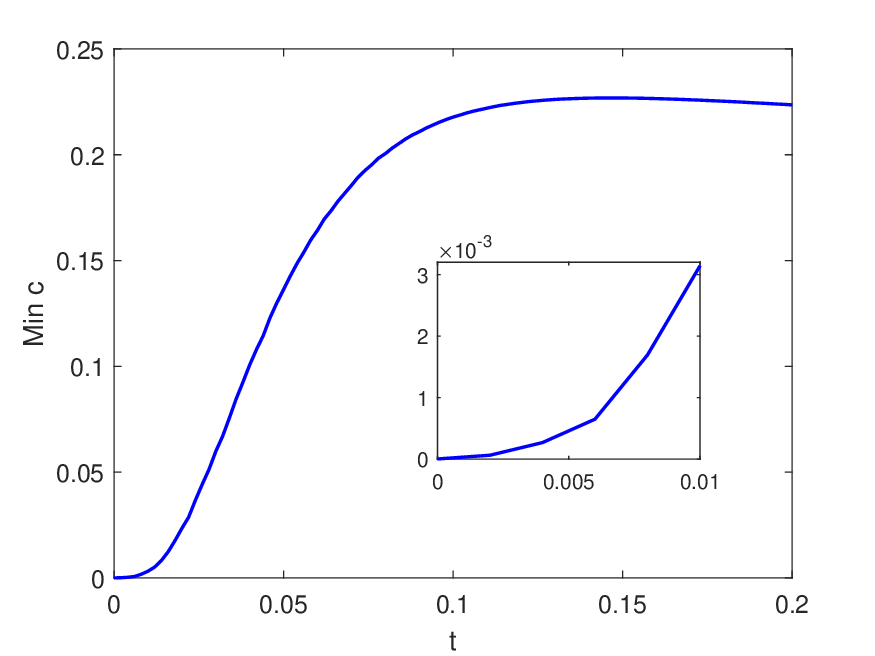}
 	\includegraphics[width=0.32\linewidth]{figure/iter_smooth_eg4.eps}
 	\caption{Evolution of the extrema of $u$, $v$, $c$, and the total mass and iteration numbers for $u$ and $v$ ($\mu=0.1$)  for Example \ref{exam:bu:3D}.}
 	\label{fig:eg4:mass:non}
 \end{figure}
 \begin{figure}[!ht]
	\centering
	\includegraphics[width=0.45\linewidth]{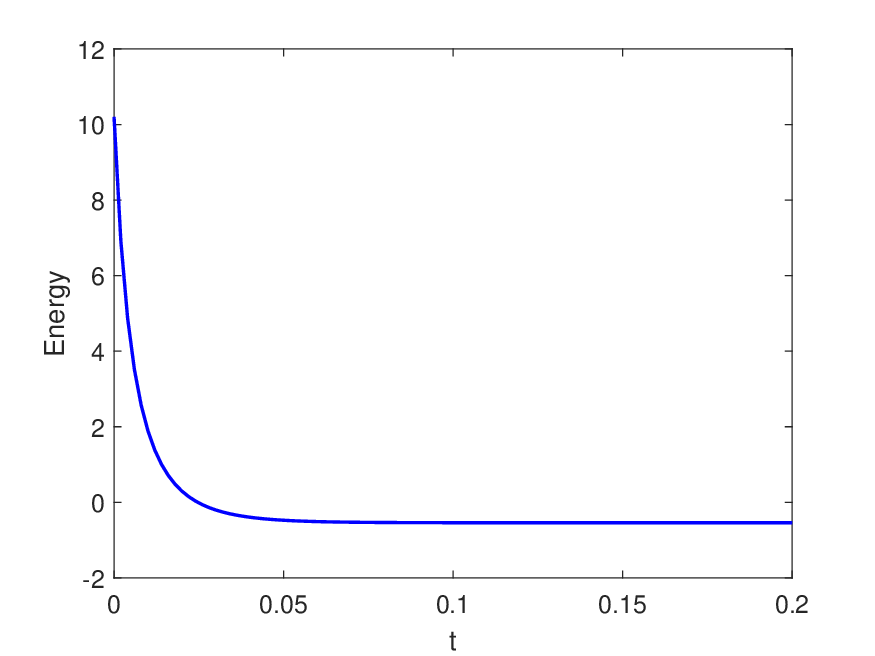}
    \includegraphics[width=0.45\linewidth]{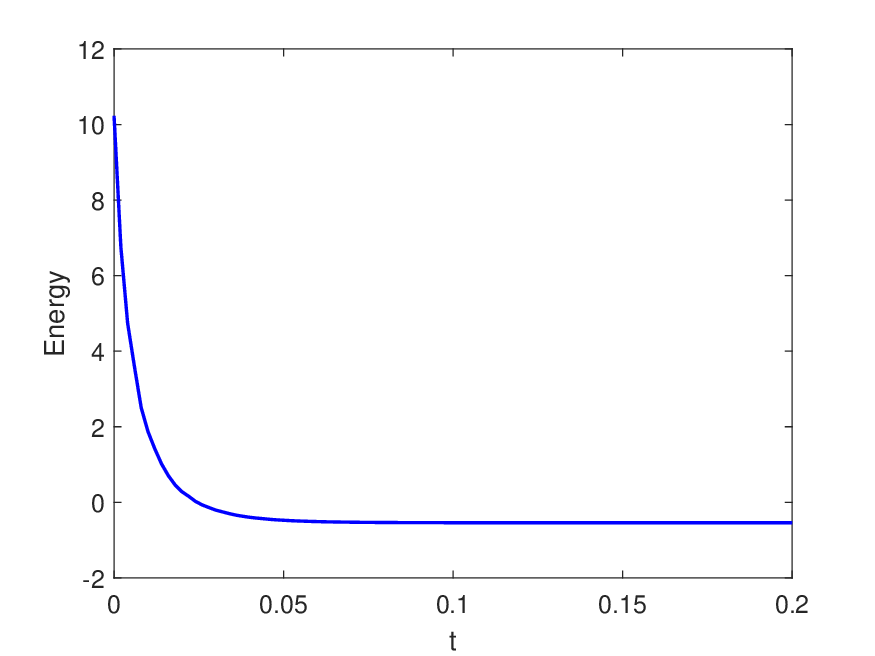}
	\caption{Evolution of the energy $\mu=0$ (left) and $\mu=0.1$ (right) for Example \ref{exam:bu:3D}.}	\label{fig:eg4:energy}
\end{figure}
 \begin{figure}[!htbp]
	\centering
	\includegraphics[width=0.24\linewidth]{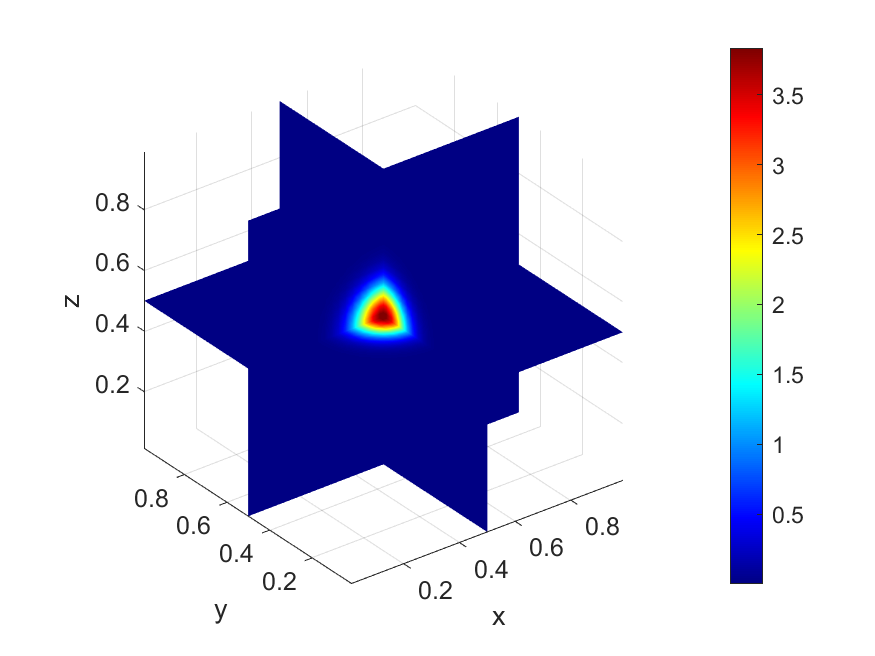}
	\includegraphics[width=0.24\linewidth]{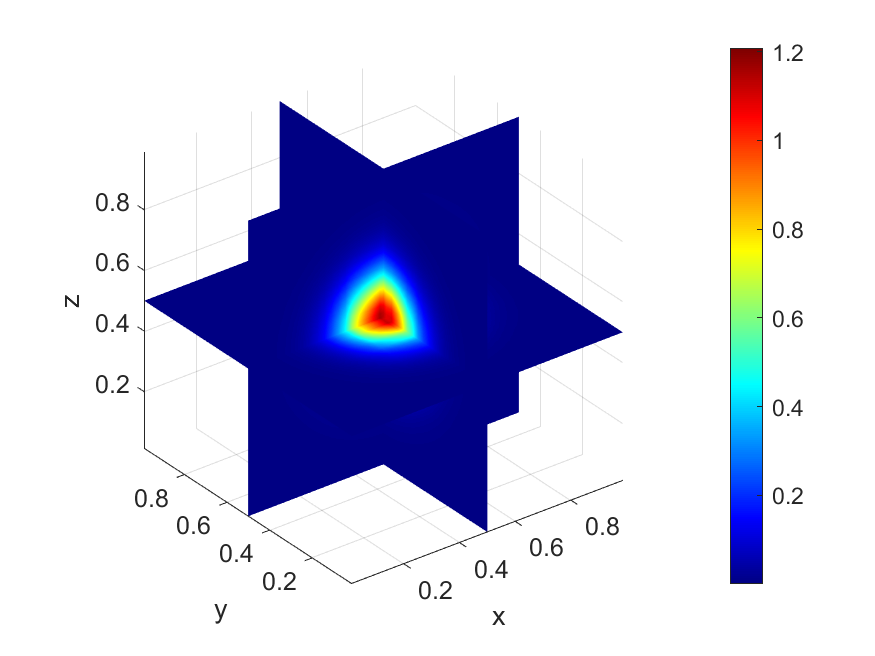}
	\includegraphics[width=0.24\linewidth]{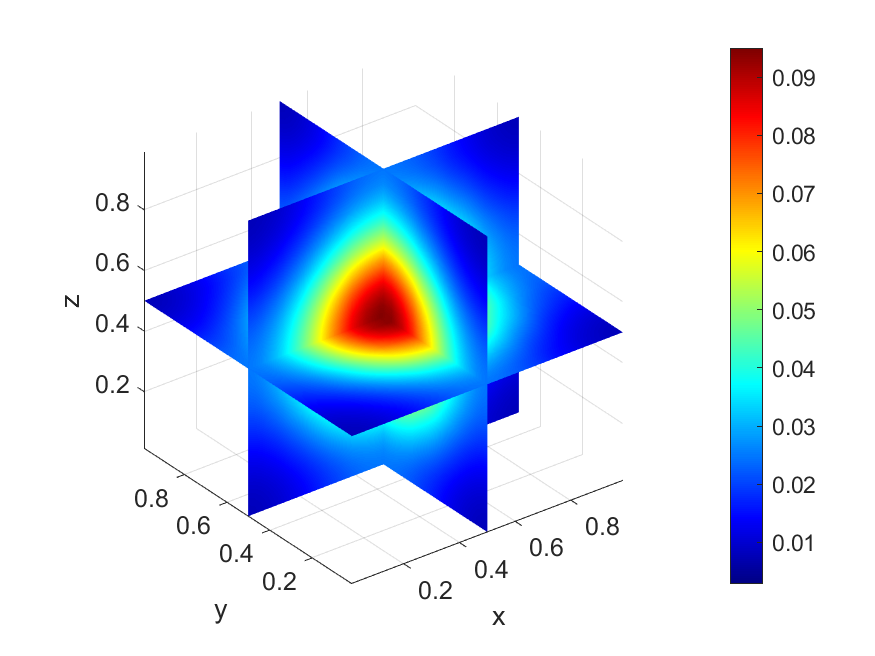}
	\includegraphics[width=0.24\linewidth]{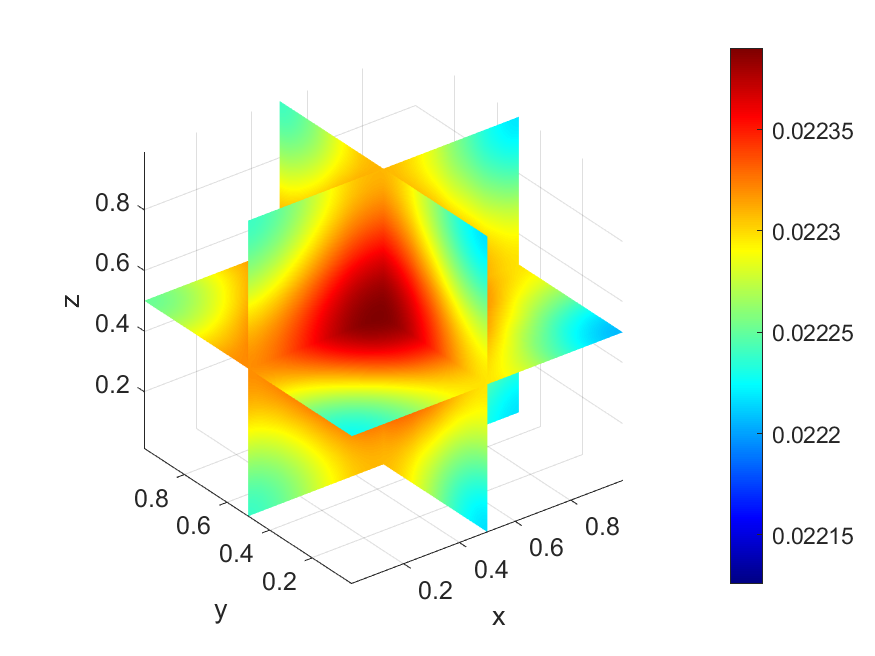}
	\includegraphics[width=0.24\linewidth]{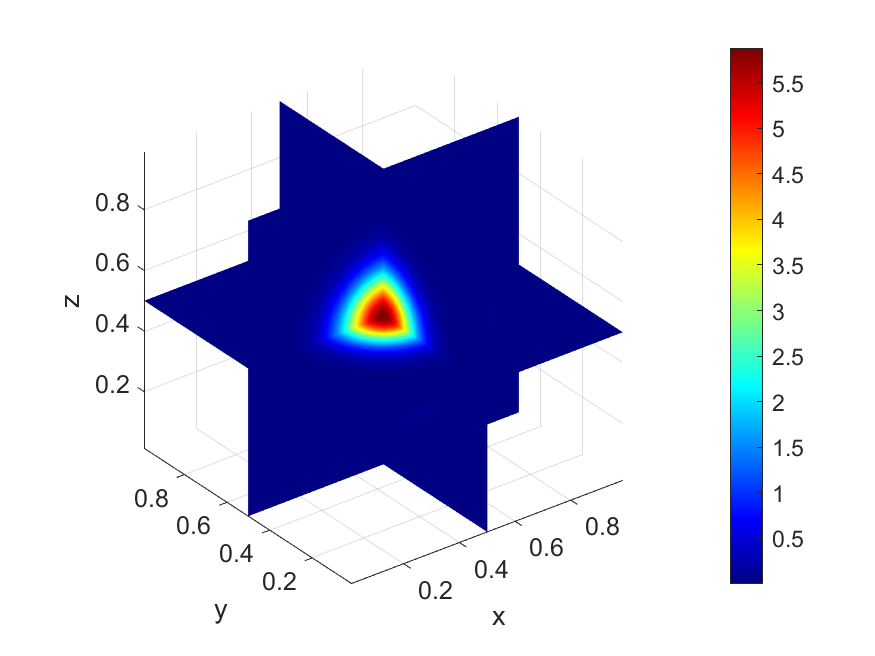}
	\includegraphics[width=0.24\linewidth]{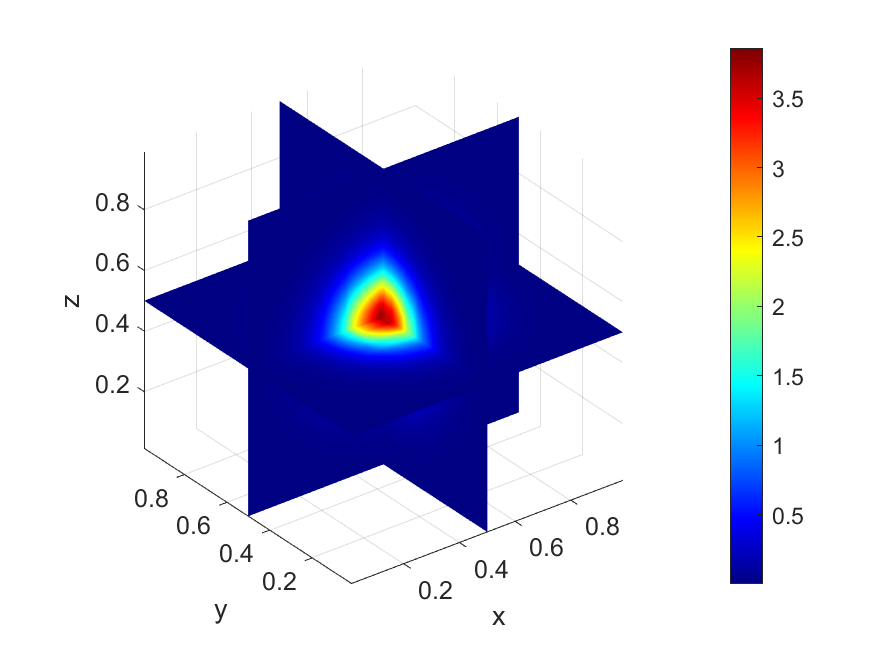}
	\includegraphics[width=0.24\linewidth]{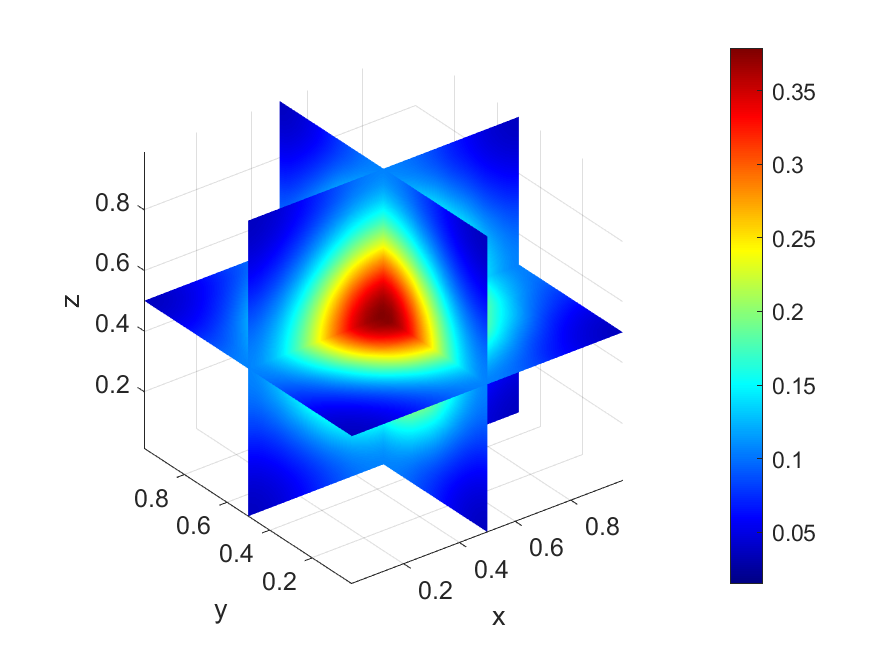}
	\includegraphics[width=0.24\linewidth]{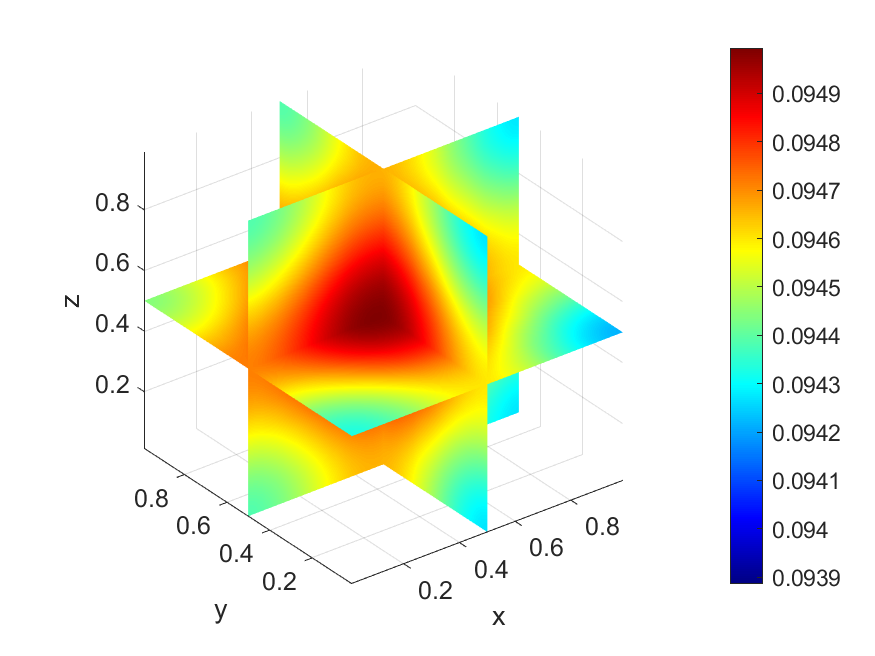}	\includegraphics[width=0.24\linewidth]{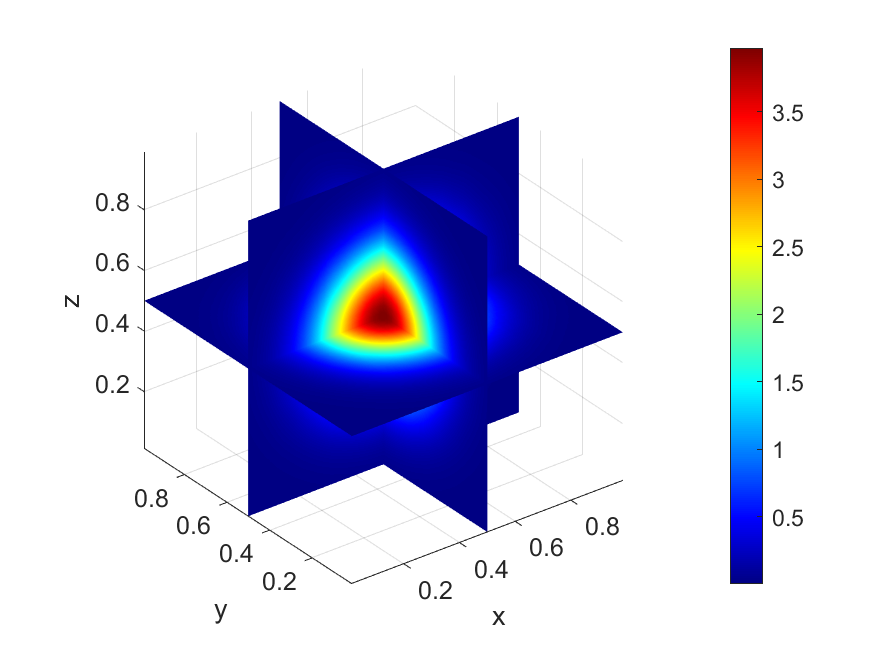}
	\includegraphics[width=0.24\linewidth]{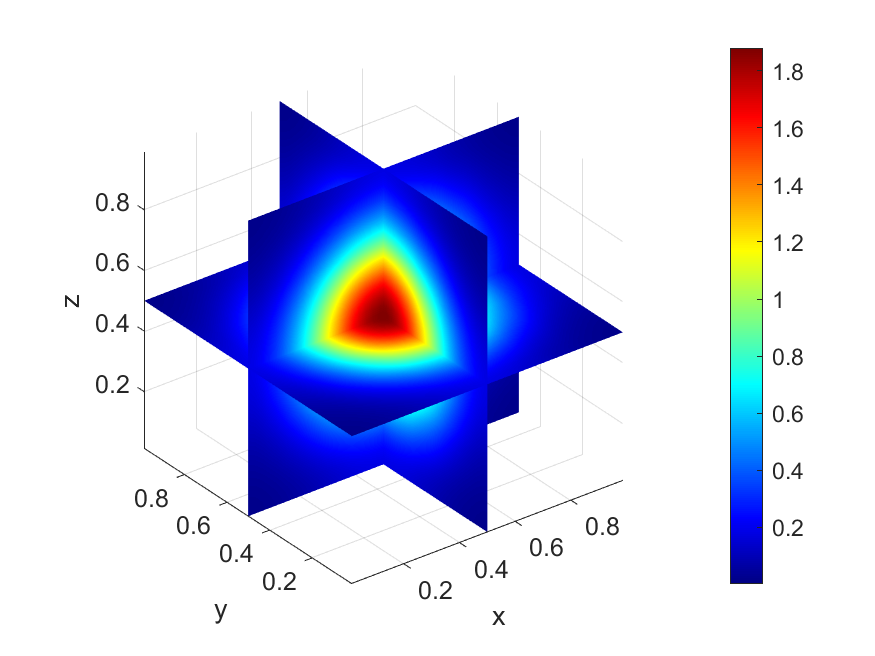}
	\includegraphics[width=0.24\linewidth]{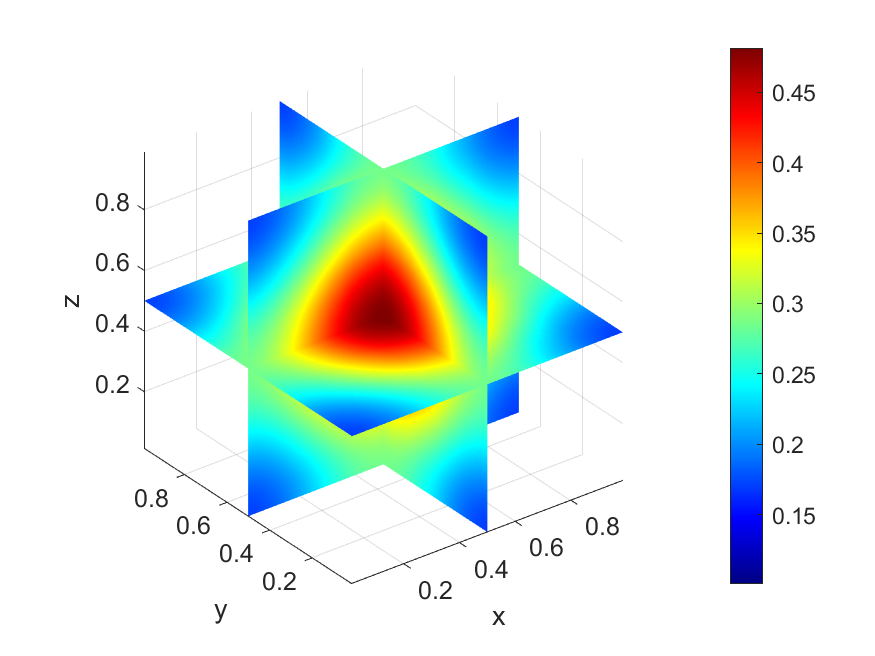}
	\includegraphics[width=0.24\linewidth]{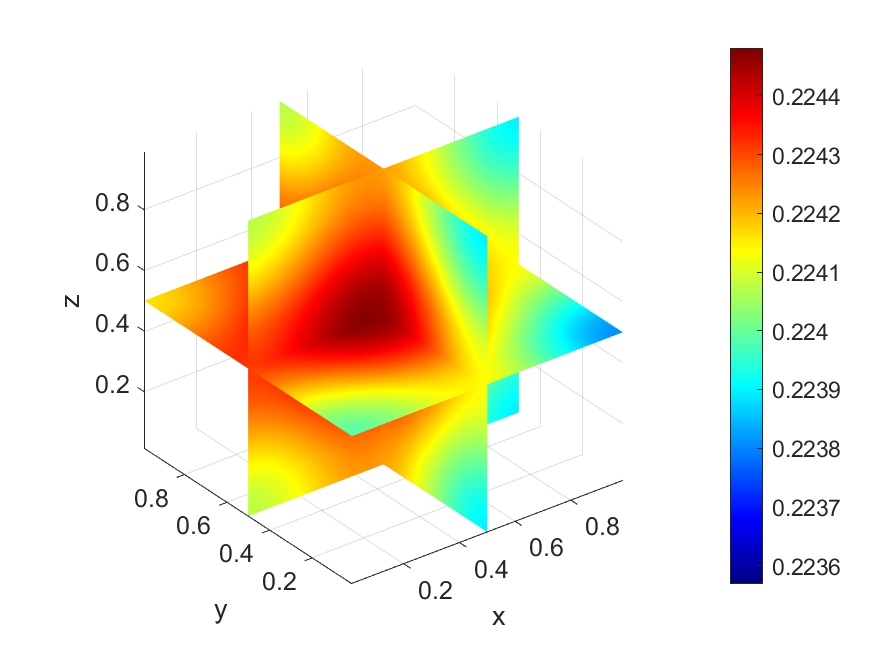}
  \caption{ Slices of $u,v,c$ (from top to bottom)  at time instants $t=0,\ 8.2\times 10^{-3},\ 4.0\times 10^{-2},\ 0.2$ (from left to right) for Example \ref{exam:bu:3D}. }
\label{fig:3D}
\end{figure}

\section{Conclusion}\label{sec:conclusion}
This paper has introduced a fully decoupled, linearly implicit, positivity-preserving, and time-staggered BCFD prediction-then-projection scheme for the multi-species Keller--Segel chemotaxis system. We demonstrate several key features of the proposed scheme:
\begin{itemize}
    \item[(i)] The proposed scheme ensures unconditional positivity and mass conservation for the cell densities at the discrete level. Moreover, under a suitable time-step condition,  the non-negativity of the chemoattractant concentration is also rigorously established.
    
    \item[(ii)] The unique solvability and optimal-order error estimates of the PP-MC-PBCFD scheme are rigorously established using the mathematical induction method and the discrete energy analysis approach, where the estimate established in Lemma \ref{lem:proj:e} for the discrete $L^2$ projection \eqref{model:KS:shemeI2} plays a crucial role in the error analysis. As proved in Theorem \ref{thm:coverg}, the cell densities $u$ and $v$ achieve second-order accuracy in both time  and space in the discrete $L^2$ norm, while the chemoattractant concentration $c$ attains the same order in the discrete $H^1$ norm.
    
    \item[(iii)] The use of variable time stepsize and time-staggered discretization fully decouples the solutions of the multi-species cell density variables and the   chemoattractant concentration variable, while also enabling linearization, thereby significantly enhancing computational efficiency.
    
    \item [(iv)] An adaptive time-stepping strategy \eqref{tau_adap}, driven by the numerical solution evolution behavior, together with the time-staggered BCFD method on non-uniform spatial grids, effectively and accurately captures the blow-up phenomenon.
\end{itemize}

Furthermore, extensive numerical experiments have validated the accuracy, positive-preserving and mass conservation properties of the proposed scheme for the multi-species Keller--Segel chemotaxis system,  while also demonstrating its reliability in simulating the blow-up phenomenon. As mentioned earlier, the model inherently satisfies the physical energy dissipation law \cite{HS'21,WLF'25,HZ'23,LWZ'18,SX'20,AGR'23}, which has been confirmed by our numerical results. However, a theoretical guarantee of this energy dissipation property is unavailable and requires further investigation.

\section*{CRediT authorship contribution statement}
	\textbf{Ao Zhang}: Methodology, Formal analysis, Software, Writing-Original draft.
	\textbf{Bingyin Zhang}: Methodology, Formal analysis, Writing-Original draft.
	\textbf{Hongfei Fu}: Conceptualization, Supervision, Writing-Reviewing and Editing, Methodology, Funding acquisition.
		
\section*{Declaration of competing interest} The authors declare that they have no competing interests.
			
\section*{Data availability}   
Data are available upon reasonable request.

\section*{Acknowledgements}
This work was supported in part by the Natural Science Foundation of Shandong Province (No. ZR2024MA023) and by the National Natural Science Foundation of China (No. 12131014).

\bibliographystyle{elsarticle-num}
\bibliography{reference.bib}

\end{document}